\newtheorem{thm}{Theorem}[section]
\newtheorem{prop}[thm]{Proposition}
\newtheorem{definition}[thm]{Definition}
\newtheorem{lem}[thm]{Lemma}
\newtheorem{cor}[thm]{Corollary}
\newtheorem{remark}[thm]{Remark}
\theoremstyle{notation}
\newtheorem*{notation}{Notation}
\newcommand{\R}{\mathbb{R}}
\numberwithin{equation}{section}
\newcommand{\N}{\mathbb{N}}
\newcommand{\eps}{\varepsilon}
\begin{document}

\title[Normalized solutions for 4NLS]{Normalized solutions to the mixed dispersion nonlinear Schr\"odinger equation in the mass critical and supercritical regime}

\thanks{D. Bonheure \& J.B. Casteras are supported by MIS F.4508.14 (FNRS), PDR T.1110.14F (FNRS); J.B. Casteras is supported by the
Belgian Fonds de la Recherche Scientifique -- FNRS;
D. Bonheure is partially supported by the project ERC Advanced Grant  2013 n. 339958: ``Complex Patterns for Strongly Interacting Dynamical Systems - COMPAT'' and by ARC AUWB-2012-12/17-ULB1- IAPAS, This work has been carried out in the framework of the project NONLOCAL (ANR-14-CE25-0013),
funded by the French National Research Agency (ANR)}

\author[Bonheure, Casteras, Gou and Jeanjean]{Denis Bonheure \and Jean-Baptiste Casteras \and Tianxiang Gou \and Louis Jeanjean}

\address{Denis Bonheure, Jean-Baptiste Casteras
\newline \indent D\'epartement de Math\'ematiques, Universit\'e Libre de Bruxelles,
\newline \indent CP 214, Boulevard du triomphe, B-1050 Bruxelles, Belgium}
\email{Denis.Bonheure@ulb.ac.be}
\email{jeanbaptiste.casteras@gmail.com}
\address{Tianxiang Gou
\newline \indent Laboratoire de Math\' ematiques (UMR 6623), Universit\' e Bourgogne Franche-Comt\'e,
\newline \indent 16, Route de Gray 25030 Besan\c con Cedex, France.
\newline \indent School of Mathematics and Statistics,
\newline \indent Lanzhou University, Lanzhou, Gansu 730000, People's Republic of China.}
\email{gou.tianxiang@gmail.com}
\address{Louis Jeanjean
\newline \indent Laboratoire de Math\' ematiques (UMR 6623), Universit\' e Bourgogne Franche-Comt\'e,
\newline \indent 16, Route de Gray 25030 Besan\c con Cedex, France.}
\email{louis.jeanjean@univ-fcomte.fr}

\begin{abstract}
In this paper, we study the existence of solutions to the mixed dispersion nonlinear Schr\"odinger equation
$$
\gamma \Delta ^2 u -\Delta u + \alpha u=|u|^{2 \sigma} u, \quad u \in H^2(\R^N),
$$
under the constraint
$$
\int_{\R^N}|u|^2 \, dx =c>0.
$$
We assume $\gamma >0, N \geq 1, 4 \leq \sigma N < \frac{4N}{(N-4)^+}$, whereas the parameter $\alpha \in \R$ will appear as  a Lagrange multiplier. Given $c \in \R^+$, we consider several questions including the existence of ground states, of positive solutions and the multiplicity of radial solutions. We also discuss the stability of the standing waves of the associated dispersive equation. 
\end{abstract}

\maketitle
\tableofcontents
\section{Introduction.}
In this paper we consider the biharmonic NLS (Nonlinear Schr\"odinger Equation) with mixed dispersion
\begin{equation}
\label{4nlsdis}
i \partial_t \psi -\gamma \Delta^2 \psi +\Delta \psi +|\psi|^{2\sigma} \psi =0, \ \psi (0, x)=\psi_0 (x),\ (t, x) \in \R \times \R^N,
\end{equation}
where we always assume that $\gamma >0$ and $0 < N \sigma < 4^*$. Here 
$$4^* := \frac{4N}{(N-4)^+}, \mbox{ namely } 4^*= \infty \mbox{ if } N\leq 4, \mbox{ and  }4^* = \frac{4N}{N-4} \mbox{ if } N \geq 5.$$  For $1 \leq q < \infty,$ we denote by $L^q(\R^N)$  the usual Lebesgue space with norm $||u||_q^q:= \int_{\R^N}|u|^q \, dx. $ 

\medbreak

Let us recall some basic facts on \eqref{4nlsdis} illustrating the effect of $\gamma>0$. First, it is well-known that NLS i.e. \eqref{4nlsdis} with $\gamma=0$, is locally well-posed in $H^1 (\R^N)$ provided that $\sigma N <2^\ast=\frac{2N}{N-2}$ (see \cite[Theorem 4.3.4]{Ca}). The counter-part of this result for \eqref{4nlsdis} with $\gamma>0$ has been obtained by Pausader \cite[Proposition 4.1]{Pa} namely he proved that \eqref{4nlsdis} is locally well-posed in $H^2 (\R^N)$ for $\sigma N < 4^\ast$. Concerning the global well-posedness, it is well-known that it holds in $H^1 (\R^N)$ for NLS when $N\sigma < 2/N$ (see \cite[Corollary 6.1.2]{Ca}) and in $H^2 (\R^N)$ for \eqref{4nlsdis} when $N\sigma < 4/N$ (see \cite[Theorem 1.1]{Pa} for radially symmetric initial data but this assumption can be released as claimed in \cite{BoLe}). Moreover, in this regime i.e. $N\sigma < 2/N$, ground state solution for NLS are orbitally stable (see \cite[Theorem 8.3.1]{Ca}). On the contrary, it is well known that, when $\sigma N \geq 2$, NLS can become singular at finite time, see for instance \cite[Theorem 6.5.10]{Ca}.  Karpman and Shagalov \cite{KaSh}, see also \cite{Ka},  were apparently the first to study the regularization and stabilization effect of a small fourth-order dispersion. Using a combination of stability analysis and numerical simulations, they showed that standing wave solutions are orbitally stable for any $\gamma >0$, when $0<\sigma N \leq 2$, and when $ 2<\sigma N <4$ for $\gamma >0$ small enough. When $\sigma N >4$ they observed an instability phenomenon. Thus  $\sigma N=4$ appears as a new critical exponent and adding a small fourth-order dispersion term clearly  helps to stabilize the standing waves. 

In nonlinear optics, NLS is usually derived from NLH (nonlinear Helmhotz equation) for the electric field by separating the fast oscillations from the slowly varying amplitude. In the so-called paraxial approximation, NLS appears in the limit as the equation solved by the dimensionless electric-field amplitude, see e.g. \cite[Section 2]{FiIlPa}. The fact that its solutions may blow up in finite time suggests that some small terms neglected in the paraxial approximation play an important role to prevent this phenomenon. Therefore a small fourth-order dispersion term was proposed in \cite{FiIlPa}, see also  \cite{BaFi, BaFiMa1, BaFiMa}, as a nonparaxial correction, which eventually gives rise to \eqref{4nlsdis}. In particular applying some arguments developed in \cite{We}, the authors \cite{FiIlPa} proved that all solutions to \eqref{4nlsdis} exist globally in time for $\sigma N <4$. We mention that the necessary Strichartz estimates have been previously obtained by Ben-Artzi et al \cite{BeKoSa}.

Nevertheless, despite its physical relevance, the dispersion equation \eqref{4nlsdis} is far from being well understood. There are only few papers dealing with \eqref{4nlsdis} besides the ones already mentioned \cite{BoCaGoJe, BoCaMoNa, BoLe, Ka, NaPa, Pa, PaSh, PaXi}. \medskip

In this paper we are interested in standing waves solutions, that is solutions of the form $\psi (t,x)=e^{i\alpha t} u(x)$, $\alpha >0$. The function $u$ then satisfies the elliptic equation
\begin{equation}
\label{4nls}
\gamma \Delta^2 u - \Delta u+\alpha u=|u|^{2\sigma}u, \quad u \in H^2(\R^N).
\end{equation}
A possible choice is to consider that $\alpha >0$ is given and to look for solutions $u \in H^2(\R^N)$ of \eqref{4nls}. Such solutions correspond to critical points of the functional 
\begin{equation}
\label{J-quadr}
I(u):=\frac{\gamma}{2}\int_{\R^N} |\Delta u|^2\, dx+ \frac 12 \int_{\R^N}|\nabla u|^2\, dx+\frac{\alpha}{2} \int_{\R^N}|u|^2\, dx - \dfrac{1}{2\sigma +2}\int_{\R^N} | u|^{2\sigma +2}\, dx,
\end{equation}
and of particular interest are the so-called least energy solutions. Namely solutions which minimize $I$ on the set
\begin{equation}
\label{ground-state}
\mathcal{N}:= \{u \in H^2(\R^N) \backslash \{0 \} : I'(u) =0 \}.
\end{equation}
This point of view is adopted in the paper \cite{BoNa}, see also \cite{BoCaMoNa}. \medskip

Alternatively one can consider the existence of solutions to \eqref{4nls} having a prescribed $L^2$-norm. Since solutions  $\psi \in C([0, T); H^2(\R^N))$ to \eqref{4nlsdis} conserve their {\it mass} along time, i.e. $\|\psi(t)\|_2=\|\psi(0)\|_2$ for $t \in [0, T)$, it is natural, from a physical point view, to search for such solutions. 

Here we focus on this issue. For $c>0$ given, we consider the problem of finding solutions to 
\begin{equation}\label{Pc}\tag{$P_c$} \gamma \Delta^2 u - \Delta u +  \alpha u = |u|^{2 \sigma}u  \quad \mbox{ with } \quad  \int_{\R^N}|u|^2 dx = c.
\end{equation}
 
It is standard to show that a critical point of the energy functional, 
\begin{equation}\label{def:E}
E (u):=\frac{\gamma}{2}\int_{\R^N}|\Delta u|^2\, dx+\frac{1}{2}\int_{\R^N}|\nabla u|^2\, dx-\frac{1}{2\sigma+2}\int_{\R^N}|u|^{2\sigma+2}\, dx.
\end{equation}
restricted to
\begin{equation*}\label{de:Mmu}
S(c):=\{u\in H^2(\R^N):\int_{\R^N}|u|^{2}\, dx=c\}
\end{equation*}
corresponds to a solution of \eqref{Pc}. The value of $\alpha \in \R$ in \eqref{Pc} is then an unknown of the problem and it corresponds to the associated Lagrange parameter.  \medskip

In \cite{BoCaMoNa}, the authors study, for $c>0$ given, the minimizing problem
\begin{equation}\label{MinL2fixed}
m(c):=\inf_{u\in S (c) }E(u).
\end{equation}
When $0 < \sigma N <4,$ the functional $E$ is bounded from below on $S(c)$ for any $c>0$, which makes possible to search for a critical point of $E$ restricted to $S(c)$ as a global minimizer. In that direction the following  result was obtained in \cite{BoCaMoNa}.
\begin{thm}\label{Compact-Min-Sol}
Assume  that $0<\sigma N <2$, then $m(c)$ is achieved for every $c>0$. If $2 \leq \sigma N <4$  then there exists a critical mass $\tilde{c}= \tilde{c}(\sigma, N)$ such that 
\begin{enumerate}[(i)]
\item $ m(c)$ is not achieved if $c< \tilde c$;
\item $ m(c)$ is achieved if $c > \tilde c$ and $\sigma = 2/N$;
\item $ m(c)$ is achieved if $c \ge \tilde c$ and $\sigma \neq 2/N$.
\end{enumerate}
Moreover if $\sigma$ is an integer and $m(c)$ is achieved, then there exists at least one radially symmetric minimizer.
\end{thm}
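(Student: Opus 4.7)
The proof proceeds by the direct method combined with Lions' concentration-compactness principle in the $H^2(\R^N)$ setting, with the entire analysis organized by the sign of $m(c)$. I would control this sign through the mass-preserving dilation $u_\lambda(x):=\lambda^{N/2}u(\lambda x)$, under which
$$E(u_\lambda)=\tfrac{\gamma\lambda^4}{2}\|\Delta u\|_2^2+\tfrac{\lambda^2}{2}\|\nabla u\|_2^2-\tfrac{\lambda^{N\sigma}}{2\sigma+2}\|u\|_{2\sigma+2}^{2\sigma+2}.$$
A preliminary step is to show $m(c)>-\infty$ and that any minimizing sequence is bounded in $H^2(\R^N)$, which follows from the $H^2$-Gagliardo--Nirenberg inequality $\|u\|_{2\sigma+2}^{2\sigma+2}\le C\|\Delta u\|_2^{N\sigma/2}\|u\|_2^{2\sigma+2-N\sigma/2}$ and Young's inequality, since $N\sigma/2<2$ throughout the range.

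In the first regime $0<N\sigma<2$, sending $\lambda\to 0^+$ in the expression for $E(u_\lambda)$ yields $m(c)<0$ for every $c>0$; combined with the scaling inequality $m(tc)<tm(c)$ for $t>1$ (a direct consequence of $m(c)<0$ and $u\mapsto\sqrt{t}\,u$), this yields the strict subadditivity $m(c)<m(\alpha)+m(c-\alpha)$ for $0<\alpha<c$. In the second regime $2\le N\sigma<4$, if $N\sigma=2$ the $\lambda^2$ and $\lambda^{N\sigma}$ terms merge and the sharp GN constant $K$ in $\|u\|_{2\sigma+2}^{2\sigma+2}\le K\|\nabla u\|_2^2\|u\|_2^{2\sigma}$ identifies the threshold $\tilde c=((\sigma+1)/K)^{1/\sigma}$: for $c<\tilde c$ one has $E\ge 0$ on $S(c)$, and scaling forces $m(c)=0$ unattained, while for $c>\tilde c$ a minimizer of the sharp GN quotient exhibits $m(c)<0$. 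If $2<N\sigma<4$, minimizing $\lambda\mapsto E(u_\lambda)$ in closed form and rescaling $u\mapsto tu$ in the mass produces analogously a critical mass $\tilde c$ with $m(c)=0$ unattained for $c<\tilde c$ and $m(c)<0$ for large $c$.

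Whenever $m(c)<0$ I run Lions' concentration-compactness on a minimizing sequence $(u_n)\subset S(c)$: vanishing would force $\|u_n\|_{2\sigma+2}\to 0$ and hence $\liminf E(u_n)\ge 0$, a contradiction; dichotomy is ruled out by the strict subadditivity above. Compactness up to translations then produces a minimizer via weak lower semicontinuity in $H^2$. The boundary case $c=\tilde c$ with $\sigma\ne 2/N$ is handled by approximation: take $c_n\searrow\tilde c$, extract the associated minimizers $u_n$, and apply a Brezis--Lieb splitting of $E$ together with continuity and strict monotonicity of $m$ past $\tilde c$ to prevent mass loss in the limit.

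The main obstacles I anticipate are the identification of the critical mass $\tilde c$ in the delicate intermediate regime $2<N\sigma<4$, where no single-power Gagliardo--Nirenberg reduction is available and one must handle the full competition between the $\lambda^2$, $\lambda^4$, and $\lambda^{N\sigma}$ terms simultaneously; and the radial symmetry statement, because Schwarz rearrangement does not in general decrease $\|\Delta u\|_2$. The hypothesis that $\sigma$ is an integer is precisely what enables rewriting $|u|^{2\sigma}u$ as a polynomial in $u$ and applying a symmetrization or Fourier/convolution argument compatible with the biharmonic operator, thus recovering a suitable rearrangement inequality on the symmetric decreasing rearrangement.
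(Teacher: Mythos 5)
You should first note that this paper does not prove Theorem \ref{Compact-Min-Sol}: it is quoted as background from \cite{BoCaMoNa}, so there is no in-paper proof to compare against. Judged on its own terms, your outline follows the route one would expect (scaling analysis of $E(u_\lambda)$, sign of $m(c)$, strict subadditivity, concentration-compactness, Fourier-side rearrangement for integer $\sigma$), and your diagnosis of where the difficulties lie is accurate; but at precisely those two points the proposal contains gaps rather than arguments.

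First, in the regime $2<\sigma N<4$ you assert that ``minimizing $\lambda\mapsto E(u_\lambda)$ in closed form'' produces the critical mass. That function has three competing powers $\lambda^2,\lambda^4,\lambda^{N\sigma}$ and no closed-form minimizer, and, more importantly, the substantive claim that $m(c)=0$ for small $c$ cannot come from the $H^2$ Gagliardo--Nirenberg inequality \eqref{G-N-H2-ineq} alone: dropping the gradient term gives the lower bound $\tfrac{\gamma}{2}X-CX^{N\sigma/4}$ with $X=\|\Delta u\|_2^2$ and $N\sigma/4<1$, whose infimum is negative for every $c>0$. One must instead combine \eqref{G-N-H1-ineq} (or \eqref{G-N-H1-ineq2}) with the interpolation $\|\nabla u\|_2^2\le\|\Delta u\|_2\|u\|_2$, so that the term $c^{a}\|\nabla u\|_2^{N\sigma}$ with $1<N\sigma/2<2$ can be absorbed by Young's inequality into $\tfrac{\gamma}{2c}\|\nabla u\|_2^4+\tfrac12\|\nabla u\|_2^2$ for $c$ small; this is where $\tilde c$ actually comes from and it is absent from your plan. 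Second, item (iii) at the endpoint $c=\tilde c$ is the genuinely delicate part of the statement (it is exactly what separates $\sigma\ne 2/N$ from $\sigma=2/N$), and your approximation scheme $c_n\searrow\tilde c$ does not work as described: since $m(c_n)\to m(\tilde c)=0$, the uniform negativity of the energy that rules out vanishing disappears in the limit, and Brezis--Lieb splitting plus monotonicity of $m$ cannot by themselves exclude that the minimizers $u_n$ tend weakly to $0$ after every translation; a quantitative lower bound on $\|u_n\|_{2\sigma+2}$, or an argument exploiting the three distinct scaling exponents available only when $\sigma\ne 2/N$, is needed and is not supplied. By contrast, your treatment of $0<\sigma N<2$, your identification of $\tilde c=((\sigma+1)/K)^{1/\sigma}$ when $\sigma N=2$, and your recognition that integrality of $\sigma$ (so that $2\sigma+2$ is an even integer) is what makes a Fourier rearrangement such as the $u^{\sharp}$ used elsewhere in this paper applicable, are all sound in outline.
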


We note the appearance of a critical mass when $2 \leq \sigma N < 4$. It is linked to the fact that each three terms of $E$ behaves differently with respect to dilations. Such a phenomenon was first observed in \cite{CoJeSq}, see also \cite{CaDoSaSo,JeLu} for related results. \medskip

In this paper we focus on the mass-critical case $\sigma N=4$ and on the mass-supercritical case $ 4 < \sigma N < 4^*.$ Our first result concerns the case $\sigma N=4$.  A key role is played by the Gagliardo-Nirenberg inequality (see e.g. \cite[Theorem in Lecture II]{Nir})
\begin{equation}\label{G-N-H2-ineq}
 \|u\|^{2\sigma+2}_{2\sigma+2}\leq B_N(\sigma)\|\Delta u\|^{\frac{\sigma N}{2}}_2\|u\|^{2+2\sigma-\frac{\sigma N}{2}}_2,
\end{equation}
where
\begin{align*}
     \begin{cases}
     &0\leq\sigma, \hspace{.5cm} \mbox{if}\ N\leq 4,\\
     &0\leq\sigma<\dfrac{4}{N-4}, \hspace{.5cm} \mbox{if}\ N\geq 5,
     \end{cases}
\end{align*}
and $B_N(\sigma)$ is a constant depending on $\sigma$ and $N$. 
\begin{thm} \label{thm1}
Let $N \geq 1, \sigma N=4$. There exists a $c^{*}_N >0$ such that
\[ m(c):= \inf_{u \in S(c)}E(u)=
\left\{
\begin{aligned}
&0,  &0<c\leq c^*_N,\\
&-\infty, &c> c^*_N.
\end{aligned}
\right.
\]
For $c \in (0, c_N^*],$ \eqref{Pc} has no nontrivial solution and in particular $m(c)$ is not achieved. In addition, we have $c^{*}_N= (\gamma C(N) )^{\frac N4}$ where 
\begin{align} \label{cn}
C(N):=\frac{N+4}{N B_N (\frac 4N) },
\end{align}
and $B_N (\sigma)$ is the smallest constant satisfying  \eqref{G-N-H2-ineq}.
\end{thm}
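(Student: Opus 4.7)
The plan is to exploit the critical scaling $\sigma N=4$, under which the $L^2$-preserving dilation $u_\lambda(x):=\lambda^{N/2}u(\lambda x)$ produces the same $\lambda^4$ homogeneity in both the bi-Laplacian term and the nonlinearity, while the gradient term remains of lower order. I would first note that for $u\in S(c)$ the Gagliardo--Nirenberg inequality \eqref{G-N-H2-ineq} at $\sigma=4/N$ reduces to
\[
\|u\|_{2\sigma+2}^{2\sigma+2}\leq B_N(4/N)\,\|\Delta u\|_2^{2}\,c^{4/N}.
\]
Substituting into $E$, writing $\frac{B_N(4/N)}{2\sigma+2}=\frac{1}{2C(N)}$ via the definition of $C(N)$, one gets
\[
E(u)\geq \frac{1}{2}\|\nabla u\|_2^{2}+\frac{1}{2}\Bigl(\gamma-\frac{c^{4/N}}{C(N)}\Bigr)\|\Delta u\|_2^{2},
\]
so the coefficient is nonnegative exactly when $c\leq c^{*}_N=(\gamma C(N))^{N/4}$; this gives $m(c)\geq 0$ on that range.

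For the matching upper bound, a direct computation shows that $u_\lambda\in S(c)$ satisfies
\[
E(u_\lambda)=\lambda^{4}\Bigl(\frac{\gamma}{2}\|\Delta u\|_2^{2}-\frac{1}{2\sigma+2}\|u\|_{2\sigma+2}^{2\sigma+2}\Bigr)+\frac{\lambda^{2}}{2}\|\nabla u\|_2^{2},
\]
so letting $\lambda\to 0^{+}$ yields $m(c)\leq 0$ for every $c>0$; combined with the previous paragraph, $m(c)=0$ on $(0,c^{*}_N]$. When $c>c^{*}_N$, I would choose $u\in S(c)$ so that $\|u\|_{2\sigma+2}^{2\sigma+2}/\|\Delta u\|_2^{2}$ is arbitrarily close to $B_N(4/N)\,c^{4/N}$ (either by taking an extremiser of \eqref{G-N-H2-ineq} and normalising its mass, or by taking a near-optimising sequence and rescaling). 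Since the condition $c^{4/N}>\gamma C(N)$ is equivalent to $B_N(4/N)c^{4/N}>(2\sigma+2)\gamma/2$, the $\lambda^{4}$-coefficient becomes strictly negative, and $\lambda\to+\infty$ forces $E(u_\lambda)\to-\infty$, so $m(c)=-\infty$.

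For non-existence on $(0,c^{*}_N]$, suppose $u\in H^{2}(\R^N)$ solves $(P_c)$. Testing the equation against $u$ gives $\gamma\|\Delta u\|_2^{2}+\|\nabla u\|_2^{2}+\alpha c=\|u\|_{2\sigma+2}^{2\sigma+2}$, and the biharmonic Pohozaev identity
\[
\gamma\frac{N-4}{2}\|\Delta u\|_2^{2}+\frac{N-2}{2}\|\nabla u\|_2^{2}+\alpha\frac{N}{2}c=\frac{N}{2\sigma+2}\|u\|_{2\sigma+2}^{2\sigma+2},
\]
obtained by multiplying the equation by $x\cdot\nabla u$ and integrating by parts, supplies a second relation. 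Taking $N/2$ times the first minus the second eliminates $\alpha c$ and produces
\[
2\gamma\|\Delta u\|_2^{2}+\|\nabla u\|_2^{2}=\frac{2N}{N+4}\|u\|_{2\sigma+2}^{2\sigma+2}.
\]
Plugging in the Gagliardo--Nirenberg bound above gives
\[
\|\nabla u\|_2^{2}\leq 2\Bigl(\frac{c^{4/N}}{C(N)}-\gamma\Bigr)\|\Delta u\|_2^{2},
\]
and the right-hand side is nonpositive precisely for $c\leq c^{*}_N$. Hence $\nabla u\equiv 0$, which in $H^{2}(\R^N)$ forces $u\equiv 0$.

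The step I expect to require the most attention is the construction in the second paragraph of elements of $S(c)$ that nearly saturate \eqref{G-N-H2-ineq}: either one invokes the existence of an extremiser in $H^{2}(\R^N)$ and rescales it to mass $c$, or one works with a minimising sequence of the associated Rayleigh quotient and applies the mass-preserving dilation. All other steps are routine once one has the scale-invariance at $\sigma N=4$, the sharp Gagliardo--Nirenberg inequality, and the Nehari/Pohozaev identities in hand.
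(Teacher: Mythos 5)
Your proof is correct and follows essentially the same route as the paper: $m(c)\le 0$ by the mass-preserving dilation as $\lambda\to 0^+$, $m(c)\ge 0$ and the nonexistence statement via the sharp Gagliardo--Nirenberg inequality combined with the identity $Q(u)=0$ (your combination of the Nehari and Pohozaev identities is exactly $2Q(u)=0$), and $m(c)=-\infty$ for $c>c_N^*$ by scaling a mass-normalised near-saturator of \eqref{G-N-H2-ineq}. The only (harmless) divergence is that the paper invokes the attainment of the best constant $B_N(4/N)$ from the literature, whereas your near-optimising-sequence alternative shows attainment is not actually needed for this step.
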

Theorem \ref{thm1} shows that $m(c)$ becomes infinite for $c>0$ large. Actually when $4 < \sigma N <4^*$ it is also the case for any $c>0$. To see this, for any 
 $u \in S(c)$, $\lambda>0$, we define
\begin{align} \label{dilatation}
u_{\lambda}(x):=\lambda^{\frac{N}{4}}u(\sqrt{\lambda} x).
\end{align}

This definition is clearly motivated by the fact that $\|u_\lambda\|_2=\|u\|_2$. One then computes
\begin{align} \label{functional}
E(u_{\lambda})=\frac{\gamma\lambda^{2}}{2}\int_{\R^N}|\Delta u|^2\, dx+\frac{\lambda}{2}\int_{\R^N}|\nabla u|^2\, dx-\frac{\lambda^{\sigma N/2}}{2\sigma+2}\int_{\R^N}|u|^{2\sigma+2}\, dx.
\end{align}
Obviously $E(u_{\lambda}) \to -\infty$ as $\lambda \to \infty$, and therefore $m(c)=-\infty$. In particular it is no more possible to obtain a critical point of $E$ restricted to $S(c)$ as a global minimizer. To overcome this difficulty, we exploit the property that $E$ restricted to $S(c)$ possesses a natural constraint, namely a set, that contains all the critical points of $E$ restricted to $S(c)$. This set is given by
$$
\mathcal{M}(c):=\{u\in S(c) : Q(u)=0\},
$$
where\
\begin{equation*}
\label{defiQ}
Q(u):=\gamma  \int_{\R^N}|\Delta u|^2\, dx+\dfrac{1}{2}\int_{\R^N}|\nabla u|^2\, dx-\frac{\sigma N}{2(2\sigma+2)}\int_{\R^N}|u|^{2\sigma+2}\, dx .
\end{equation*}
Using \eqref{functional}, we see that
\begin{equation}\label{derivationofQ}
Q(u)=\dfrac{\partial E (u_\lambda)}{\partial \lambda}|_{\lambda=1}
\end{equation}
and thus, at least heuristically, the set $\mathcal{M}(c)$ contains all the critical points. Actually the condition $Q(u)=0$ corresponds to a Derrick-Pohozaev identity. As we shall see,  the functional $E|_{\mathcal{M}(c)}$ has much better properties than $E|_{S(c)}$. In particular, it is coercive, see Lemma \ref{coercive}. \medskip

For convenience, we define $c_0 \in \R$ as $c_0 =0$ if $4 < \sigma N <4^*$ and $c_0 = c_N^*$ if $\sigma N =4$. 
In Lemma \ref{Mnonvoid}, we shall prove that $\mathcal{M}(c) \neq \emptyset$, for any $c >c_0$. It is then possible to define, for any fixed $c > c_0$
\begin{align}\label{min1}
\Gamma(c):= \inf_{u\in \mathcal M(c)}E(u).
\end{align}
Our next result concerns the existence of a minimizer associated to $\Gamma(c)$. Note that, when a minimizer exists, it is a ground state solution in the sense that it minimizes the functional $E$, among all solutions having this $L^2$-norm. \medskip

\begin{thm}
\label{thmmain}
Let $N \geq 1$ and $4 \leq \sigma N< 4^*$.  There exists $c_{N,\sigma}>c_0$ such that for any $c\in (c_0,c_{N,\sigma})$,
\eqref{Pc} has a  ground state solution $u_c$, satisfying $E(u_c)=\Gamma(c)$, and the associated Lagrange parameter $\alpha_c$ is strictly positive. Moreover
\begin{enumerate}[(i)]
\item if $N=1,2$ and $\sigma \geq 4/N$, then $c_{N,\sigma }=\infty$;
\item if $4/3 \le\sigma \leq 2$, then $c_{3,\sigma}=\infty$; 
\item if $\sigma N=4$, then $c_{4,1}=\infty$ and $c_{N, \sigma} \geq  \left(\frac{N}{N-4}\right)^{\frac N4}c^*_N$ for $N \geq 5$.
\end{enumerate}
\end{thm}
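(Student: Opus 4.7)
The plan is to realize $\Gamma(c)$ as a mountain-pass critical level of $E|_{S(c)}$, to produce via Jeanjean's monotonicity trick a Palais--Smale sequence at this level that also satisfies $Q(u_n)\to 0$, and then to obtain strong convergence through a concentration-compactness argument, since classical symmetrization does not decrease the biharmonic energy. For fixed $u \in S(c)$, the fibering $h_u(\lambda):=E(u_\lambda)$ from \eqref{functional} starts at $0$ with positive right-derivative and satisfies $h_u(\lambda)\to -\infty$ as $\lambda\to\infty$ whenever $\sigma N>4$, or whenever $\sigma N=4$ and $c>c_N^*$ (by \eqref{G-N-H2-ineq} the $\lambda^2$-coefficient is then negative for suitable $u$). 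Its unique critical point corresponds, via \eqref{derivationofQ}, to the intersection of the fiber with $\mathcal{M}(c)$, endowing $E|_{S(c)}$ with a mountain-pass structure whose minimax level equals $\Gamma(c)$. Applying a deformation lemma to the auxiliary functional $\widetilde E(u,s):=E(u_{e^s})$ on $S(c)\times\R$, I extract $(u_n, s_n)$ with $\widetilde E(u_n,s_n)\to \Gamma(c)$ and both partial differentials vanishing; rescaling $v_n:=(u_n)_{e^{s_n}}$ yields a PS sequence for $E|_{S(c)}$ at level $\Gamma(c)$ with $Q(v_n)\to 0$. Eliminating $\|u\|_{2\sigma+2}^{2\sigma+2}$ between $E$ and $Q=0$ gives, on $\mathcal{M}(c)$,
\[
E(u)=\tfrac{\gamma}{2}\Big(1-\tfrac{4}{\sigma N}\Big)\|\Delta u\|_2^2 + \tfrac{1}{2}\Big(1-\tfrac{2}{\sigma N}\Big)\|\nabla u\|_2^2,
\]
so that $\{v_n\}$ is bounded in $H^2(\R^N)$ and $\Gamma(c)>0$.

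For the compactness step, I extract a weak limit $v_n \rightharpoonup u_c$ in $H^2$, possibly after translation. Vanishing is ruled out by a Lions-type concentration lemma: if $\sup_{y\in\R^N}\int_{B(y,1)}|v_n|^2\to 0$, then $v_n\to 0$ in $L^{2\sigma+2}$ and $Q(v_n)\to 0$ forces $\|v_n\|_{H^2}\to 0$, contradicting $\Gamma(c)>0$. Splitting is prevented through the strict subadditivity
\[
\Gamma(c)<\Gamma(c_1)+\Gamma(c-c_1),\qquad 0<c_1<c,
\]
which I derive from the strict monotonicity of $c\mapsto\Gamma(c)$ on $(c_0, c_{N,\sigma})$ by testing with dilations $u_\lambda$ and exploiting the explicit form of $E$ on $\mathcal{M}(c)$ above. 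A Brezis--Lieb decomposition combined with this subadditivity forces $\|u_c\|_2^2=c$ and strong convergence in $H^2$. Hence $u_c\in\mathcal{M}(c)$ with $E(u_c)=\Gamma(c)$, and $u_c$ solves $\gamma\Delta^2 u_c-\Delta u_c+\alpha_c u_c=|u_c|^{2\sigma}u_c$ for some $\alpha_c\in\R$.

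To handle the sign of $\alpha_c$ and the cases (i)--(iii), combining the tested equation with $Q(u_c)=0$ yields
\[
\alpha_c c = \gamma\,\frac{4\sigma+4-\sigma N}{\sigma N}\|\Delta u_c\|_2^2 + \frac{2\sigma+2-\sigma N}{\sigma N}\|\nabla u_c\|_2^2.
\]
The biharmonic coefficient is positive throughout the range $\sigma N<4^*$ (indeed $\sigma<4/(N-4)$). In cases (i) and (ii) the gradient coefficient is also nonnegative, so $\alpha_c>0$ for every $c>c_0$, giving $c_{N,\sigma}=\infty$. In case (iii) with $\sigma N=4$: for $N=4$ (so $\sigma=1$) the gradient coefficient vanishes but the biharmonic term alone gives $\alpha_c>0$, so $c_{4,1}=\infty$; for $N\ge 5$, the gradient coefficient is negative, but from $Q(u_c)=0$ and the sharp Gagliardo--Nirenberg constant $B_N(4/N)$ of Theorem~\ref{thm1} one bounds $\|\nabla u_c\|_2^2$ by a $c^\sigma$-multiple of $\|\Delta u_c\|_2^2$, producing the explicit threshold $c_{N,\sigma}\ge (N/(N-4))^{N/4}c_N^*$. (When $N=1,\sigma=4$, both coefficients are strictly positive and this case is already absorbed in (i).)

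The main obstacle will be the strict subadditivity of $\Gamma$: since symmetric rearrangement does not control the $L^2$-norm of the Laplacian, it must play the role usually reserved for Schwarz symmetrization and Strauss-type compactness. Its proof rests on a delicate monotonicity/scaling analysis of $c\mapsto \Gamma(c)$, and the interval $(c_0,c_{N,\sigma})$ in the statement is precisely the range on which this monotonicity together with the positivity of $\alpha_c$ can simultaneously be secured.
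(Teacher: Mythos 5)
Your overall skeleton (Palais--Smale sequence at level $\Gamma(c)$ living asymptotically on the Pohozaev set, weak limit after translation, positivity of the multiplier from the identity $\alpha_c c=\gamma\frac{4\sigma+4-\sigma N}{\sigma N}\|\Delta u_c\|_2^2+\frac{2\sigma+2-\sigma N}{\sigma N}\|\nabla u_c\|_2^2$) matches the paper, and your construction of the PS sequence via the auxiliary functional $\widetilde E(u,s)=E(u_{e^s})$ is an acceptable variant of the Ghoussoub-type argument used in Lemma \ref{psbis}. The genuine gap is in the compactness step. You propose to exclude dichotomy by the strict subadditivity $\Gamma(c)<\Gamma(c_1)+\Gamma(c-c_1)$, ``derived from the strict monotonicity of $c\mapsto\Gamma(c)$''. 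This is circular: in the paper strict monotonicity (Corollary \ref{gammamonostrict}) is a \emph{consequence} of the existence of minimizers via Lemma \ref{monotoneprop}, and Remark \ref{tooeasy} explicitly records that proving it beforehand seems out of reach; only the nonincreasing property (Lemma \ref{lemgammadecr}) is available at this stage. More seriously, even granting subadditivity, the standard dichotomy exclusion does not apply to a minimax level over $\mathcal{M}(c)$: the escaping piece $w_n=u_n-u_c$ satisfies only $Q(w_n)=o_n(1)$ and does not lie on $\mathcal{M}(c-c_1)$, so there is no lower bound $E(w_n)\ge\Gamma(c-c_1)+o_n(1)$ --- indeed in the ``good'' case $w_n\to0$ in $L^{2\sigma+2}$ the fiber of $w_n$ has no critical point with comparable energy. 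The paper's replacement (Lemma \ref{propcompactness}) is different and you should adopt it: from $Q(u_c)=0$ and $Q(u_n)=0$ one gets $Q(w_n)=o_n(1)$, the coercive representation $E(w_n)-\frac{2}{\sigma N}Q(w_n)\ge0$ gives $E(w_n)\ge o_n(1)$, while $E(w_n)+E(u_c)=\Gamma(c)+o_n(1)$, $E(u_c)\ge\Gamma(c_1)\ge\Gamma(c)$ (nonincreasing suffices) gives $E(w_n)\le o_n(1)$; hence $E(w_n)\to0$, which yields $L^{2\sigma+2}$-convergence. Note also that this step does \emph{not} give $\|u_c\|_2^2=c$: recovering the full mass requires strong $H^2$-convergence, which in turn needs $\alpha_c>0$ as an input (Lemma \ref{conv}), so your claim that Brezis--Lieb plus subadditivity already forces $\|u_c\|_2^2=c$ inverts the logical order.

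A secondary gap: the headline statement requires $c_{N,\sigma}>c_0$ for \emph{every} admissible pair $(N,\sigma)$, but your sign analysis only covers cases (i)--(iii), where the gradient coefficient is nonnegative or $\sigma N=4$. For instance when $N=3,\ \sigma>2$ or $N\ge4$ with $\sigma N>4$ the coefficient $2+\sigma(2-N)$ is negative and some argument is needed; the paper uses \eqref{nonexe2} to show $\|\Delta u_c\|_2^2\to\infty$ as $c\to0$ and the interpolation $\|\nabla u_c\|_2^2\le\|\Delta u_c\|_2\,c^{1/2}$ so that the positive biharmonic term dominates for small $c$, yielding $\alpha_c>0$ on some $(0,c_{N,\sigma})$. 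You should supply this to cover the general case.
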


The proof of Theorem \ref{thmmain} crucially relies on the observation, strongly based on arguments due to Bartsch and Soave \cite{BaSo2}, that there exists a Palais-Smale sequence, for $E$ restricted to $S (c)$ at the level $\Gamma(c)$, which consists of elements of $\mathcal{M}(c)$. \medskip

Let $(u_n)_n \subset \mathcal{M}(c)$ be a Palais-Smale sequence for $E$ restricted to $S (c)$.  Since $E$ is coercive on $\mathcal{M}(c)$, see Lemma \ref{coercive}, we infer that $(u_n)_n$ is bounded and it follows that, up to a subsequence and up to translations,  $u_n \rightharpoonup u_c$ weakly in $H^2(\R^N)$ for some $u_c$ satisfying
\begin{equation} \label{weaklimite}
\gamma \Delta^2 u_c - \Delta u_c + \alpha_c u_c = |u_c|^{2 \sigma}u_c.
\end{equation}

We show in Lemma \ref{conv} that the strong convergence in $H^2(\R^N)$ occurs as soon as $(u_n)_n$ strongly converges in $L^{2 \sigma +2}(\R^N)$ and  $\alpha_c >0$ in \eqref{weaklimite}.   \medskip

In the frame of Theorem \ref{thmmain}, since we work with a Palais-Smale sequence at the level $\Gamma(c)$, we can make use of the observation that $c \mapsto \Gamma(c)$ is nonincreasing on $(c_0, \infty)$ to prove that $u_n \to u_c$ in $L^{2 \sigma +2}(\R^N)$,  see Lemma \ref{propcompactness}. The restrictions on the couple $(N, \sigma)$ arise to insure that $\alpha_c >0$, see Lemma \ref{sign-la}. Note that the strong convergence of $(u_n)_n$ to $u_c$ in $H^2(\R^N)$ implies that  $u_c$  belongs to $\mathcal{M}(c)$ and that it satisfies $E(u_c) = \Gamma(c)$. In particular $\Gamma(c)$ is then achieved. \medskip

Finding constrained critical points when the functional is unbounded from below on the constraint is a question that remained for long only considered in the sole paper \cite{Je}. This question however has recently received more attention and in particular in the frame of various scalar problems \cite{AcWe,BaVa, BeJe, BeJeLu} as well as for systems \cite{BaJe, BaJeSo, BaSo, BaSo2}. The present work benefits in particular from techniques introduced in \cite{BaSo2, BeJeLu}. The common feature of these papers is that the underlying problems are autonomous and set on the whole space $\R^N$. This transfers to the functional a regular behaviour with respect to dilations that are essential in our proofs. In related works \cite{NoTaVe1,NoTaVe2,PiVe} where the underlying equations, or systems, are set on a bounded domain and are not necessarily autonomous, the questions tackled and the techniques used are quite different. 

In our next result, taking advantage of the genus theory, we prove the existence of infinitely many radial solutions to \eqref{Pc}. 
\begin{thm} \label{thm}
Assume that $N \geq 2$.
\begin{enumerate}[(i)]
\item If $4 < \sigma N < 4^*$, then for any $c\in (0, {c}_{N,\sigma})$, where ${c}_{N,\sigma}$ is defined in Theorem \ref{thmmain}, \eqref{Pc} possesses infinitely many radial solutions.
\item If $\sigma N=4$ and $2 \leq N \leq 4$, then for any $k \in \N^+$ there exists a $c(k) > c_N^*$ such that, for any $c \geq c(k),$ \eqref{Pc} possesses at least $k$ radial solutions. 
\end{enumerate}
\end{thm}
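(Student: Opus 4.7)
The plan is to combine Krasnoselskii's genus theory with the constrained minimax framework already used for Theorem \ref{thmmain}, working in the radial subspace $H^2_{rad}(\R^N)$. The essential gain in the radial setting is that, for $N\geq 2$ and $2<2\sigma+2<2^{\ast\ast}$ (which is ensured by $\sigma N<4^\ast$), the embedding $H^2_{rad}(\R^N)\hookrightarrow L^{2\sigma+2}(\R^N)$ is compact, so any bounded Palais--Smale sequence will automatically converge in the nonlinear term. I would work on the radial Pohozaev manifold $\mathcal{M}_{rad}(c):=\mathcal{M}(c)\cap H^2_{rad}(\R^N)$, which is invariant under the $\mathbb{Z}_2$-action $u\mapsto -u$; by the principle of symmetric criticality, the critical points of $E|_{\mathcal{M}_{rad}(c)}$ coincide with radial solutions of \eqref{Pc}. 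For each $k\in\N^+$, I would then define the minimax level
\[
\Gamma_k(c):=\inf_{A\in\mathcal{A}_k(c)}\,\max_{u\in A} E(u),
\]
where $\mathcal{A}_k(c)$ is the family of compact, symmetric subsets $A\subset\mathcal{M}_{rad}(c)$ of Krasnoselskii genus $\gamma(A)\geq k$.

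For part (i), the first step is to verify $\mathcal{A}_k(c)\neq\emptyset$. This can be done by picking $k$ radial bump functions $v_1,\dots,v_k$ with pairwise disjoint annular supports, forming convex combinations parametrized by $(t_1,\dots,t_k)\in S^{k-1}$, and then using the dilation \eqref{dilatation} together with the fact (to be extracted from the analysis underlying Lemma \ref{Mnonvoid}) that for each $u\in S(c)\cap H^2_{rad}$ there is a unique $\lambda(u)>0$ with $u_{\lambda(u)}\in\mathcal{M}_{rad}(c)$; this provides an odd continuous map $S^{k-1}\to\mathcal{M}_{rad}(c)$, whose image has genus $\geq k$. The second step is to produce, at each level $\Gamma_k(c)$, a Palais--Smale sequence $(u_n)_n\subset\mathcal{M}_{rad}(c)$ for $E|_{S(c)\cap H^2_{rad}}$. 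This is the Bartsch--Soave type deformation argument already underlying Theorem \ref{thmmain}, adapted equivariantly: one flows along a pseudo-gradient for $E$ modified so as to preserve both the mass constraint $\|u\|_2^2=c$ and the Pohozaev constraint $Q(u)=0$, and one checks that this flow commutes with $u\mapsto -u$ so it respects the genus.

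The third step is compactness. Coercivity of $E$ on $\mathcal{M}(c)$ (Lemma \ref{coercive}) bounds $(u_n)_n$ in $H^2$; the radial compact embedding delivers $u_n\to u_c$ in $L^{2\sigma+2}(\R^N)$; and Lemma \ref{conv} upgrades this to strong $H^2$-convergence provided the associated Lagrange multiplier $\alpha_k$ is positive. The positivity of $\alpha_k$ is proved exactly as in Lemma \ref{sign-la}, and it is here that the threshold $c_{N,\sigma}$ of Theorem \ref{thmmain} enters the assumptions. Finally, to pass from ``$\Gamma_k(c)$ is a critical value for each $k$'' to ``infinitely many distinct critical points'', one shows $\Gamma_k(c)\to+\infty$; this uses the fact that if $\Gamma_k(c)$ were bounded, then by standard Ljusternik--Schnirelmann arguments the set of critical points at this bounded level would have infinite genus, contradicting the compactness just established together with a Gagliardo--Nirenberg based lower bound on $\|\Delta u\|_2$ for $u\in\mathcal{M}_{rad}(c)$.

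For part (ii) the scheme is identical, but two extra observations are needed. First, when $\sigma N=4$ one has $m(c)=-\infty$ already for $c>c_N^\ast$, so the constraint $Q(u)=0$ is indispensable from the outset. Second, the construction of elements in $\mathcal{A}_k(c)$ via $k$ disjointly supported radial bumps requires enough mass to accommodate $k$ profiles at admissible energy; this forces a mass threshold $c(k)>c_N^\ast$ depending on $k$, which explains the quantitative statement. Once $c\geq c(k)$, the genus-based argument produces $k$ distinct radial critical values $\Gamma_1(c)<\Gamma_2(c)<\dots<\Gamma_k(c)$, each achieved. The main obstacle throughout is the second step above: the Pohozaev set $\mathcal{M}(c)$ is not itself a natural $C^1$ constraint on which one can directly do minimax, so one must carefully engineer an equivariant deformation on $S(c)\cap H^2_{rad}$ that is tangent to $\mathcal{M}_{rad}(c)$, preserves the $\mathbb{Z}_2$-symmetry, and strictly decreases $E$ away from critical points; this, rather than the compactness or the genus counting, is where the technical work lies.
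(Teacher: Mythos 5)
Your proposal follows essentially the same route as the paper: Krasnoselskii genus classes of symmetric compact subsets of $\mathcal{M}_{rad}(c)$, the Bartsch--Soave equivariant minimax producing Palais--Smale sequences for $E$ restricted to $S(c)$ that lie in $\mathcal{M}_{rad}(c)$, compactness from the embedding $H^2_{rad}(\R^N)\hookrightarrow L^{2\sigma+2}(\R^N)$, positivity of the Lagrange multiplier from Lemma \ref{sign-la} (whence the role of $c_{N,\sigma}$, which equals $\infty$ precisely when $\sigma N=4$ and $2\le N\le 4$), and, for $\sigma N=4$, a threshold $c(k)$ arising because the projection $u\mapsto u_{\lambda_u}$ onto $\mathcal{M}_{rad}(c)$ is only available on a $k$-dimensional sphere in $S(c)$ once $c$ is large enough (equivalence of norms in finite dimensions guaranteeing $\gamma\|\Delta u\|_2^2<\tfrac{N}{N+4}\|u\|_{2+8/N}^{2+8/N}$). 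The one step to adjust is the last: you neither need, nor does your argument actually establish, that $\Gamma_k(c)\to+\infty$ (boundedness of the sequence of levels does not by itself yield a single level whose critical set has infinite genus), and the strict ordering $\Gamma_1(c)<\dots<\Gamma_k(c)$ asserted in part (ii) is likewise neither needed nor guaranteed; the paper instead concludes via the standard dichotomy that either the levels $\beta_k$ are pairwise distinct, or two coincide and then the critical set at the common level has genus at least $2$ and is therefore infinite.
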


To establish Theorem \ref{thm} we work in the subspace $H_{rad}^2(\R^N)$ of radially symmetric functions in $H^2(\R^N)$. Accordingly we define 
$\mathcal{M}_{rad}(c) := \mathcal{M}(c) \cap H_{rad}^2(\R^N)$.  The proof of Theorem \ref{thm} is based on the use of the Kranosel'skii genus and again on arguments from \cite{BaSo2}, which guarantee, at appropriate minimax levels, the existence of a Palais-Smale sequence, for $E$ restricted to $S(c)$, consisting of elements of $\mathcal{M}_{rad}(c)$. To show the convergence of these Palais-Smale sequences we proceed as for the proof of Theorem \ref{thmmain}.
The required compactness comes here from
the compact embedding of $H_{rad}^2 (\R^N)$ into $L^{2 \sigma +2}(\R^N)$ whereas the positivity of the associated $\alpha_c$ in the limit equation \eqref{weaklimite} comes again from 
Lemma \ref{sign-la}. 
Another step in the proof of Theorem \ref{thm} is to show that the set $\mathcal{M}_{rad}(c)$  is {\it sufficiently large}. This is always the case when $4 < \sigma N <4^*$ for any $c>0$. However when $\sigma N=4$ the set $\mathcal{M}_{rad}(c)$ may be {\it too small}. In particular it shrinks to the empty set as $c \to c_N^*$. To obtain a given number of critical points we require that $c >c_N^*$ is sufficiently large. 

\medskip

In dimensions $3$ and $4$, and for radial ground state, we can relax the range assumption on $c$ of Theorem \ref{thm}.  By a radial ground state, we mean a radial function which minimizes $E$ on the set $ \mathcal{M}_{rad}(c)$.
\begin{thm}
\label{radN34} 
Let $N=3$ or $N=4$ and assume that $4\le\sigma N < \infty$. Then, for any $c>c_0$, there exists a radial ground state solution to \eqref{Pc}.
\end{thm}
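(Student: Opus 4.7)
The plan is to realise the infimum
$\Gamma_{rad}(c) := \inf_{u \in \mathcal{M}_{rad}(c)} E(u)$,
where $\mathcal{M}_{rad}(c) := \mathcal{M}(c) \cap H^2_{rad}(\R^N)$. A radial adaptation of Lemma~\ref{Mnonvoid} shows that $\mathcal{M}_{rad}(c)$ is non-empty for every $c > c_0$, and Lemma~\ref{coercive} ensures that $E$ is coercive on $\mathcal{M}(c)$, hence on $\mathcal{M}_{rad}(c)$; in particular $\Gamma_{rad}(c) > 0$. The key geometric fact exploited in dimensions $N \in \{3,4\}$ is the compactness of the embedding $H^2_{rad}(\R^N) \hookrightarrow L^q(\R^N)$ for every $q \in (2,\infty)$, and so in particular for $q = 2\sigma + 2$ throughout the whole range $4 \le \sigma N < \infty$.

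Following the Bartsch--Soave construction used in the proof of Theorem~\ref{thmmain}, now carried out in the radial subspace, one produces a Palais--Smale sequence $(u_n)_n \subset \mathcal{M}_{rad}(c)$ for $E$ restricted to $S(c) \cap H^2_{rad}(\R^N)$ at the level $\Gamma_{rad}(c)$. By coercivity, $(u_n)_n$ is bounded in $H^2_{rad}$; up to a subsequence, $u_n \rightharpoonup u_c$ weakly in $H^2_{rad}(\R^N)$ and, by the compact embedding, $u_n \to u_c$ strongly in $L^{2\sigma+2}(\R^N)$. The associated Lagrange multipliers $\alpha_n$ are bounded, so up to a further subsequence $\alpha_n \to \alpha_c \in \R$, and passing to the limit in the PS equation gives that $u_c$ weakly satisfies
\[
\gamma \Delta^2 u_c - \Delta u_c + \alpha_c u_c = |u_c|^{2\sigma} u_c.
\]
Testing this equation with $u_c$ and combining with its Pohozaev identity yields $Q(u_c) = 0$.

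Setting $w_n := u_n - u_c$, the Brezis--Lieb lemma applied to $Q(u_n)=0$, combined with the strong $L^{2\sigma+2}$-convergence and $Q(u_c)=0$, produces
\[
\gamma \lim_n \|\Delta w_n\|_2^2 + \tfrac{1}{2} \lim_n \|\nabla w_n\|_2^2 = 0,
\]
hence $u_n \to u_c$ strongly in $\dot H^1(\R^N) \cap \dot H^2(\R^N)$. In particular $E(u_c) = \Gamma_{rad}(c) > 0$ and $u_c \not\equiv 0$. Testing moreover the PS equation against $u_n$ and passing to the limit using these strong convergences, and comparing with the $u_c$--version of the multiplication-by-$u$ identity, yields the algebraic relation
$\alpha_c(c - d) = 0$,
where $d := \|u_c\|_2^2 \le c$.

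It remains to establish $d = c$. In the parameter ranges already treated by Theorem~\ref{thmmain}, Lemma~\ref{sign-la} gives $\alpha_c > 0$ and the above identity forces $d = c$. Outside those ranges---namely $N=3$ with $\sigma > 2$ and $N=4$ with $\sigma > 1$---one argues via the strict monotonicity of $c \mapsto \Gamma_{rad}(c)$ on $(c_0,\infty)$: indeed $u_c \in \mathcal{M}_{rad}(d)$ gives $\Gamma_{rad}(d) \le E(u_c) = \Gamma_{rad}(c)$, while nonincreasing monotonicity yields $\Gamma_{rad}(d) \ge \Gamma_{rad}(c)$, so strict decrease forces $d = c$. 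Establishing this strict monotonicity in the supercritical regimes above is the main expected obstacle; it should follow from a careful $L^2$--preserving dilation argument combined with the structure of $E$ on $\mathcal{M}$, but requires additional care in the mass-critical case $\sigma N = 4$ near the threshold $c_0 = c_N^\ast$ and when the exponent relations prevent a one-step reduction. Once $d = c$ is secured, $u_c \in \mathcal{M}_{rad}(c)$ achieves $\Gamma_{rad}(c)$; by Palais' principle of symmetric criticality it is a weak solution of $(P_c)$, hence a radial ground state.
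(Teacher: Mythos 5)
Your overall strategy (radial Palais--Smale sequence at level $\Gamma_{rad}(c)$, the compact embedding $H^2_{rad}(\R^N)\hookrightarrow L^{2\sigma+2}(\R^N)$, Brezis--Lieb, and the identity $\alpha_c(c-d)=0$) matches the paper's, and your derivation of strong $\dot H^1\cap \dot H^2$ convergence directly from $Q(u_n)=Q(u_c)=0$ together with the strong $L^{2\sigma+2}$ convergence is a clean shortcut. The parameter ranges already covered by Theorem \ref{thmmain} are handled correctly.

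The gap is exactly where you flag it, and it cannot be closed by the route you sketch. In the regimes $N=3$, $\sigma>2$ and $N=4$, $\sigma>1$ you must exclude $\alpha_c=0$ (equivalently $d<c$), and you propose to do so via strict monotonicity of $c\mapsto\Gamma_{rad}(c)$ obtained from an ``$L^2$-preserving dilation argument''. No such soft argument can work: for $N\ge 5$ the paper shows (Proposition \ref{bigN}) that $\Gamma$ is eventually \emph{constant}, because the zero-frequency limit equation \eqref{4NLSalpha=0} then admits finite-mass solutions; since the dilation structure is identical in every dimension, strict monotonicity must come from a dimension-specific analytic input. In the paper that input is Proposition \ref{sharpdecay}: any nontrivial radial solution $u\in X$ of $\gamma\Delta^2 u-\Delta u=|u|^{2\sigma}u$ satisfies $u(x)\sim C|x|^{2-N}$ at infinity and hence does not belong to $L^2(\R^N)$ when $N=3,4$. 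Combined with Lemma \ref{monotoneprop} (which gives $\alpha\ge 0$ for a minimizer at its own mass level; note that $u_c$ is a radial ground state at mass $d$ because $\Gamma_{rad}$ is nonincreasing, by Remark \ref{rem:gammaradmonotone}), this yields $\alpha_c>0$ (Corollary \ref{sign-la2}) and forces $d=c$. Moreover, the strict monotonicity you want to invoke (Corollary \ref{gammaradmonostrict}) is in the paper itself a \emph{consequence} of Theorem \ref{radN34} together with Corollary \ref{sign-la2}, so appealing to it here without the decay estimate would be circular. The hard content of Proposition \ref{sharpdecay} (representation formula, iteration scheme, maximum principle for the associated cooperative system) is the missing ingredient needed to finish.
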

The main additional ingredient in the proof is a sharp decay estimate of radial solutions $u$ to 
\begin{equation}
\label{alphazerointro}
\gamma\Delta^2 u -\Delta u =|u|^{2\sigma}u, 
\end{equation}
when $N=3$ and $\sigma> 2$ or $N=4$ and $\sigma>1$.
We believe this result has its own interest. 

\begin{prop}
\label{sharpdecay}
Let $N=3$ and $2 <\sigma < \infty$ or $N=4$ and $1 < \sigma <\infty$. Assume that $u\in X$ is a nontrivial radial solution to 
\begin{equation}
\label{4NLSeqend}
 \gamma \Delta^2 u - \Delta u  = |u|^{2 \sigma}u ,\quad x\in\R^N .
\end{equation}
Then there exists $C\in \R\setminus\{0\}$ such that $$\lim_{|x|\to\infty}\frac{u(x)}{|x|^{2-N}}=C.$$ 
\end{prop}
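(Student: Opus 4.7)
The plan is to exploit the factorization $\gamma\Delta^{2}-\Delta=-\Delta(I-\gamma\Delta)$. Setting $v:=u-\gamma\Delta u$, the equation \eqref{4NLSeqend} becomes the Poisson problem $-\Delta v=f$ with $f:=|u|^{2\sigma}u$, while $u$ is recovered from $v$ by $u=B_{\gamma}\ast v$, where $B_{\gamma}$ is the Bessel kernel of $(I-\gamma\Delta)^{-1}$: radial, positive, of unit mass, and exponentially decaying at infinity. This reduces the decay analysis of $u$ to that of a Newton potential.

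First I would establish preliminary regularity and decay. By interior elliptic regularity for the fourth-order operator, $u\in C^{\infty}(\R^{N})$. The radial Strauss-type embedding available in $X$ gives $|u(x)|\lesssim|x|^{-(N-1)/2}$ at infinity, and under our hypotheses ($\sigma>2$ for $N=3$, $\sigma>1$ for $N=4$) this implies $f\in L^{1}(\R^{N})$ with $|f(y)|\lesssim(1+|y|)^{-\alpha}$ for some $\alpha>N$. Since $v$ vanishes at infinity and solves $-\Delta v=f$, it must be the Newton potential
\begin{equation*}
v(x)=\frac{1}{(N-2)|\mathbb{S}^{N-1}|}\int_{\R^{N}}\frac{f(y)}{|x-y|^{N-2}}\,dy,
\end{equation*}
and a standard region splitting (distinguishing $|y|\le|x|/2$; $|y|>|x|/2$ with $|x-y|\ge|x|/4$; and the small ball $|y-x|<|x|/4$, which is controlled by the pointwise decay of $f$) yields $|v(x)|\lesssim|x|^{2-N}$. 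The same rate transfers to $u=B_{\gamma}\ast v$ because convolution with the unit-mass exponentially localized kernel $B_{\gamma}$ preserves polynomial decay rates.

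To extract the exact profile I would write
\begin{equation*}
|x|^{N-2}v(x)=\frac{1}{(N-2)|\mathbb{S}^{N-1}|}\int_{\R^{N}}\left(\frac{|x|}{|x-y|}\right)^{N-2}f(y)\,dy
\end{equation*}
and apply dominated convergence (with the same splitting providing the dominating function) to obtain $\lim_{|x|\to\infty}|x|^{N-2}v(x)=C$ where $C:=\tfrac{1}{(N-2)|\mathbb{S}^{N-1}|}\int_{\R^{N}}f\,dy$. The unit mass of $B_{\gamma}$ together with the vanishing of its first moment (by radiality) and the smoothness of $v$ then give $u(x)-v(x)=o(|x|^{2-N})$ as $|x|\to\infty$, so $\lim_{|x|\to\infty}|x|^{N-2}u(x)=C$ as well.

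The hardest step is to show $C\neq 0$, and I would argue by contradiction: if $\int f\,dy=0$, the explicit radial representation
\begin{equation*}
v(r)=-\frac{1}{(N-2)r^{N-2}}\int_{r}^{\infty}s^{N-1}\tilde f(s)\,ds+\frac{1}{N-2}\int_{r}^{\infty}s\,\tilde f(s)\,ds
\end{equation*}
forces $v$ to decay strictly faster than $|x|^{2-N}$, and iterating the bootstrap $|u|\lesssim|x|^{-\beta}\Rightarrow|f|\lesssim|x|^{-(2\sigma+1)\beta}\Rightarrow|v|\lesssim|x|^{2-(2\sigma+1)\beta}$ gives $\beta_{k+1}=(2\sigma+1)\beta_{k}-2$, which under our hypotheses diverges to $\infty$; hence $u$ has super-polynomial decay. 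Consequently $u\in H^{2}(\R^{N})\cap L^{1}(\R^{N})$, so integration of the equation against polynomial weights $|y|^{2k}$ is legitimate and yields moment identities such as $\int u\,dy=-\tfrac{1}{2N}\int|y|^{2}|u|^{2\sigma}u\,dy$ and higher-order analogues. Combining these with the Nehari identity $\gamma\|\Delta u\|_{2}^{2}+\|\nabla u\|_{2}^{2}=\|u\|_{2\sigma+2}^{2\sigma+2}$ and the Pohozaev identity $\tfrac{(N-4)\gamma}{2}\|\Delta u\|_{2}^{2}+\tfrac{N-2}{2}\|\nabla u\|_{2}^{2}=\tfrac{N}{2\sigma+2}\|u\|_{2\sigma+2}^{2\sigma+2}$ should yield $u\equiv 0$, contradicting nontriviality: the positive-$u$ case is immediate because the above moment identity then forces $\int u=0$ and hence $u\equiv 0$. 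This rigidity step, turning super-polynomial decay plus structural identities into triviality, is the crux of the proof.
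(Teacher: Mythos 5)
There are two genuine gaps here, and the second one is fatal. First, your preliminary decay estimate is not available: the rate $|u(x)|\lesssim |x|^{-(N-1)/2}$ is the Strauss rate for radial $H^1$ functions and requires $u\in L^2$, which is precisely what elements of $X$ need not satisfy (indeed the proposition implies $u\notin L^2$ for $N=3,4$). What radial functions in $X\subset D^{1,2}$ actually give is $|u(x)|\lesssim |x|^{-(N-2)/2}$, and with that rate $|f|=|u|^{2\sigma+1}\lesssim |x|^{-(2\sigma+1)(N-2)/2}$ beats $|x|^{-N}$ only when $\sigma>5/2$ (for $N=3$) or $\sigma>3/2$ (for $N=4$); in the remaining ranges $2<\sigma\le 5/2$ and $1<\sigma\le 3/2$ your hypotheses for the Newton-potential analysis fail. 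The paper has to work for this: it first proves $f\in L^1$ by a Lebesgue-exponent bootstrap using $W^{2,p}$ estimates and Hardy--Littlewood--Sobolev, and then obtains the pointwise bound $|u(x)|\le C|x|^{2-N}$ by an iteration scheme on the representation formula $u=G*f$ in the spirit of Evequoz--Weth, splitting $u=\sum_k u_k+B_m$ with geometrically decaying remainders. Your convolution framework $v=u-\gamma\Delta u$, $u=B_\gamma*v$ is a reasonable alternative to the paper's direct use of the fundamental solution $G=g_{-1}-g_0$, but it does not dispense with these two steps.

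The more serious problem is the nonvanishing of $C$, which you correctly identify as the crux but do not prove. Your contradiction argument only closes in the case $u>0$ (where it is trivial, since then $\int f>0$ contradicts $\int f=0$ directly, with no need for moment identities); for sign-changing $u$ you offer no argument, and the Nehari and Pohozaev identities cannot supply one: in the range $N=3$, $\sigma>2$ their combination reads $\gamma\frac{\sigma+4}{3\sigma}\|\Delta u\|_2^2+\frac{2-\sigma}{3\sigma}\|\nabla u\|_2^2=0$, whose coefficients have opposite signs, so no rigidity follows -- this is exactly why nontrivial solutions exist in this regime. The paper's mechanism is entirely different and is the real content of the proof: it rewrites the equation as the cooperative system $-\Delta u=v$, $-\Delta v+v=|u|^{2\sigma}u$, uses the decay $|u|^{2\sigma}\lesssim |x|^{2\sigma(2-N)}$ to build the supersolution $V(x)=|x|^{2-N+\varepsilon}$ of $\Delta^2V-\Delta V-|u|^{2\sigma}V\ge 0$ on far-out annuli, and invokes Sirakov's maximum principle to rule out oscillation of $u$ at infinity; once $u\ge 0$ and $-\Delta u\ge 0$ outside a ball, the Liouville-type result of Moroz--Van Schaftingen delivers both the existence of the limit and $C\neq 0$. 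Without some substitute for this sign-at-infinity argument, your proof does not establish the statement.
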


The function space $X$, defined by \eqref{defX}, is the natural energy space associated to \eqref{alphazerointro}.
The proof of this proposition is based on a representation formula and a maximum principle for cooperative systems. 
Observe that as a direct corollary, there is no solution to \eqref{alphazerointro} with finite mass in dimensions $3$ and $4$. 
As it will appear later, this fact indirectly implies that the Lagrange multiplier $\alpha_c$ has to be strictly positive, see Corollary \ref{sign-la2}.

\medbreak

Next we enlighten a concentration behaviour of the ground state solutions to \eqref{Pc} when $\sigma N =4$ and $c \to c_N^*$. We remind that the existence of such ground states is guaranteed by Theorem \ref{thmmain}.
\begin{thm} \label{concentration}
Let $N \geq 1, \sigma N=4$, and $(c_n)_n \subset \R$ be a sequence satisfying, for any $n \in \N,$ $c_n > c^*_N$ with
$c_n \rightarrow c^*_N$ as $n \rightarrow \infty$, and $u_n$ be a ground state solution to \eqref{Pc}
for $c=c_n$ at level $\Gamma(c_n)$. Then there exist a sequence $(y_n)_n \subset \R^N$ and a least energy solution $u$
to the equation
\begin{align*}
\gamma \Delta^2 u + u =|u|^{\frac 8N}u,
\end{align*}
such that up to a subsequence,
$$
\left(\frac{\eps_n^4 c^*_N N}{4} \right)^{\frac N8}
u_n \left(\left(\frac{\eps_n^4 c^*_N N}{4}\right)^{\frac 14} x+ \eps_n y_n \right) \rightarrow u \ \text{in} \ L^q(\R^N)
\ \text{as} \ n \rightarrow \infty
$$
for $2 \leq q < 4^*$, where $\eps_n \to 0$ as $n \to \infty$.
\end{thm}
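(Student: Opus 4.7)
The strategy is to rescale the ground states $u_n$ so that a weak limit satisfies the massless limit equation $\gamma\Delta^2 u + u = |u|^{8/N}u$, and then to identify this limit as a least energy solution by combining Pohozaev-type identities with the critical Gagliardo--Nirenberg inequality \eqref{G-N-H2-ineq}.

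First, I would extract two identities for $u_n \in \mathcal{M}(c_n)$. Multiplying its Euler--Lagrange equation by $u_n$ and combining with $Q(u_n)=0$ to eliminate $\|u_n\|_{2+8/N}^{2+8/N}$ gives
\begin{equation*}
\alpha_n c_n = \frac{4\gamma}{N}\|\Delta u_n\|_2^2 + \frac{4-N}{2N}\|\nabla u_n\|_2^2,
\end{equation*}
while inserting \eqref{G-N-H2-ineq} into $Q(u_n)=0$ yields $\tfrac12\|\nabla u_n\|_2^2 \leq \delta_n \|\Delta u_n\|_2^2$ with $\delta_n := \tfrac{NB_N(4/N)}{N+4}c_n^{4/N}-\gamma \to 0^+$ (using $(c_N^*)^{4/N}=\gamma(N+4)/(NB_N(4/N))$). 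I would then show $\|\Delta u_n\|_2 \to \infty$, hence $\alpha_n \to \infty$, by contradiction: if $\|\Delta u_n\|_2$ stayed bounded along a subsequence, then $\|\nabla u_n\|_2 \to 0$ and $(u_n)$ would be bounded in $H^2$; a Lions-type concentration-compactness argument would yield either a nonzero weak limit $u$ (after translation) solving $\gamma\Delta^2 u - \Delta u + \alpha u = |u|^{8/N}u$ at mass $\|u\|_2^2 \leq c_N^*$, contradicting Theorem~\ref{thm1}, or a spreading sequence with $\|u_n\|_{2+8/N}\to 0$; in the latter case $Q(u_n)=0$ would force $\|\Delta u_n\|_2, \|\nabla u_n\|_2 \to 0$, which is incompatible with the rigidity that any ground state $u_n$ must approach a Gagliardo--Nirenberg extremal at the positive scale fixed by the equation.

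Next, I would set $\epsilon_n>0$ by $\alpha_n\epsilon_n^4 = 4/(Nc_N^*)$ and $a_n := \epsilon_n(Nc_N^*/4)^{1/4}$, so that $a_n^4\alpha_n=1$ and $a_n, \epsilon_n \to 0$. Defining $v_n(x) := a_n^{N/2}u_n(a_n x + \epsilon_n y_n)$ for translations $y_n$ to be chosen, scaling gives $\|v_n\|_2^2 = c_n \to c_N^*$, $\|\Delta v_n\|_2^2 = \|\Delta u_n\|_2^2/\alpha_n \to Nc_N^*/(4\gamma)$ (via the identity of the previous paragraph), and
\begin{equation*}
\gamma\Delta^2 v_n - a_n^2\Delta v_n + v_n = |v_n|^{8/N}v_n.
\end{equation*}
Testing this equation against $v_n$ produces $\gamma\|\Delta v_n\|_2^2 + a_n^2\|\nabla v_n\|_2^2 + \|v_n\|_2^2 = \|v_n\|_{2+8/N}^{2+8/N}$, and applying \eqref{G-N-H2-ineq} to $v_n$ forces $a_n^2\|\nabla v_n\|_2^2 \to 0$ and $\|v_n\|_{2+8/N}^{2+8/N} \to (N+4)c_N^*/4 > 0$. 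Hence $(v_n)$ is bounded in $H^2(\R^N)$ and does not vanish; by Lions' lemma one can choose $y_n \in \R^N$ so that $v_n \rightharpoonup v \neq 0$ in $H^2(\R^N)$ along a subsequence, and passing to the limit in the rescaled equation shows that $v$ solves $\gamma\Delta^2 v + v = |v|^{8/N}v$.

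Finally, I would identify $v$ and upgrade the convergence. Combining Pohozaev and Nehari for the limit equation yields the universal identity $\|v\|_2^2 = \tfrac{4\gamma}{N}\|\Delta v\|_2^2$; substituting this into Nehari and comparing with \eqref{G-N-H2-ineq} gives $\|v\|_2^2 \geq c_N^*$ for every nontrivial solution, with equality if and only if $v$ saturates Gagliardo--Nirenberg, which characterizes the least energy solutions of the limit equation. Combined with $\|v\|_2^2 \leq \liminf\|v_n\|_2^2 = c_N^*$, this pins $\|v\|_2^2 = c_N^*$ and $\|\Delta v\|_2^2 = Nc_N^*/(4\gamma) = \lim\|\Delta v_n\|_2^2$, so $v$ is a least energy solution. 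Norm convergence together with the weak $H^2$ convergence yields $v_n \to v$ strongly in $L^2(\R^N)$; interpolating with the uniform $H^2$ bound (through the Sobolev embedding $H^2 \hookrightarrow L^{4^*}$) gives $v_n \to v$ in $L^q(\R^N)$ for every $2 \leq q < 4^*$. The hard part is the contradiction argument in the first paragraph, which must rule out both a concentrated nonzero weak limit and the more delicate spreading scenario.
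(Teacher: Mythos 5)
Your overall strategy coincides with the paper's: rescale so that $\|\Delta v_n\|_2$ is normalized, use Lions' lemma to produce the translations $y_n$ and a nonzero weak limit, pass to the limit in the rescaled Euler--Lagrange equation to obtain $\gamma\Delta^2 v+v=|v|^{8/N}v$, and then identify $v$ as a least energy solution by playing the lower bound $\|v\|_2^2\geq c_N^*$ (valid for every nontrivial solution of the limit equation, via Nehari, Pohozaev and \eqref{G-N-H2-ineq}) against weak lower semicontinuity $\|v\|_2^2\leq\liminf c_n=c_N^*$; the final upgrade to $L^q$, $2\le q<4^*$, by $L^2$-norm convergence plus interpolation is also the paper's. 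The computations you sketch (the identity $\alpha_nc_n=\frac{4\gamma}{N}\|\Delta u_n\|_2^2+\frac{4-N}{2N}\|\nabla u_n\|_2^2$, the estimate $\tfrac12\|\nabla u_n\|_2^2\le\delta_n\|\Delta u_n\|_2^2$ with $\delta_n\to0^+$, and the identification of $\lim\|\Delta v_n\|_2^2$ and $\lim\|v_n\|_{2+8/N}^{2+8/N}$) are all correct and match the paper's \eqref{limit1}, \eqref{limit2} and \eqref{explosionbis}.

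The genuine gap is exactly where you flag it: the proof that $\|\Delta u_n\|_2\to\infty$ (equivalently $\alpha_n\to\infty$, $\eps_n\to 0$). In your contradiction argument the vanishing alternative is not closed. If $\|\Delta u_n\|_2$ stays bounded and $\|u_n\|_{2+8/N}\to0$, then $Q(u_n)=0$ only tells you that $\|\Delta u_n\|_2,\|\nabla u_n\|_2\to0$; the appeal to a ``rigidity'' forcing ground states to live at a positive Gagliardo--Nirenberg scale is not substantiated, and I do not see how to substantiate it from the equation alone: testing against $u_n$ and inserting \eqref{G-N-H2-ineq} is consistent with all quantities tending to zero, precisely because $c_n\to c_N^*$ makes the relevant inequality degenerate. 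The missing ingredient is the energy level: $E(u_n)=\Gamma(c_n)\geq\Gamma(\sup_n c_n)>0$ by the monotonicity of $\Gamma$ (Lemma \ref{lemgammadecr}) and its positivity (Lemma \ref{coercive}), which is incompatible with $\|\Delta u_n\|_2,\|\nabla u_n\|_2\to0$ since then $E(u_n)\to 0$. In fact the paper bypasses your entire dichotomy: combining $Q(u_n)=0$ with \eqref{ajout2} gives
\begin{equation}
0<\delta\leq\Gamma(c_n)=E(u_n)\leq\frac12\left(\left(\frac{c_n}{c_N^*}\right)^{4/N}-1\right)\gamma\int_{\R^N}|\Delta u_n|^2\,dx ,
\end{equation}
and since the prefactor tends to $0$ while the left side is bounded below by $\delta>0$, one gets $\|\Delta u_n\|_2^2\to\infty$ in two lines (this is \eqref{infty}--\eqref{explosion} in Lemma \ref{limitprop}). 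Replacing your first paragraph's contradiction argument by this observation makes the proposal complete and essentially identical to the paper's proof.
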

Theorem \ref{concentration} gives a description of ground state solutions to \eqref{Pc} as the mass $c_n$ approaches to $c_N^*$ from above. Roughly speaking, it shows that for $n \in \N$ large enough, we have
$$
u_n(x)\approx \left(\frac{4}{\eps_n^4 c^*_N N} \right)^{\frac N8}
u\left(\left(\frac{4}{\eps_n^4 c^*_N N} \right)^{\frac 14}\left(x- \eps_n y_n \right)\right).
$$
In the case where $\alpha \in \R$ is given in \eqref{4nls}, many additional properties are known on the least energy solutions.
In particular it is known that when  $\alpha >0$ is sufficiently small, all least energy solutions have a sign and are radial, see \cite[Theorem 3.9]{BoCaMoNa}. On the contrary when $\alpha \in \R$ is large, radial solutions are necessarily sign changing and, when $\sigma \in \N$, at least one least energy solution is radial, see \cite[Theorem 3.7, Corollary 3.8]{BoCaMoNa}. When looking to solutions with a prescribed mass, it is more delicate to deduce informations on the sign and symmetry of ground states. In that direction we only present the following result.
\begin{thm}\label{thm10}
Let $N \geq 2,$ $4 \leq \sigma N < 4^*$ and $\sigma \in \N$. There exists a $c_\pm >c_0$ such that, for any $c \in (c_0, c_\pm )$, \eqref{Pc} admits a ground state which is radial and sign changing.
\end{thm}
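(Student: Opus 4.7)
The plan is to split the argument into three steps: (i) show that the Lagrange multiplier of any ground state of \eqref{Pc} blows up as $c\to c_0^+$; (ii) show that for $\sigma\in\N$ at least one ground state can be chosen radial; (iii) invoke \cite[Theorem~3.7]{BoCaMoNa} to conclude that this radial ground state is sign changing.

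For step (i), let $u_c$ be a ground state of \eqref{Pc} with multiplier $\alpha_c$ given by Theorem \ref{thmmain}. In the mass-critical case $\sigma N=4$ this is read off from the concentration analysis underlying Theorem \ref{concentration}: with $\lambda_n := (\eps_n^4 c_N^* N /4)^{1/4}\to 0$, the rescaled function $v_n(x) := \lambda_n^{N/2} u_n(\lambda_n x+\eps_n y_n)$ satisfies
\[
\gamma \Delta^2 v_n - \lambda_n^2 \Delta v_n + \alpha_n\lambda_n^4 v_n = |v_n|^{2\sigma} v_n,
\]
and the convergence of $v_n$ to a nontrivial solution of $\gamma\Delta^2 v+v=|v|^{8/N}v$ forces $\alpha_n\lambda_n^4 \to 1$, hence $\alpha_n\to +\infty$. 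In the mass-supercritical case $4<\sigma N<4^*$ an analogous rescaling $u(x)=\mu w(\lambda x)$ with $\mu^{2\sigma}=\lambda^4$ and $\alpha\sim\lambda^4$ produces $\|u\|_2^2\sim \lambda^{4/\sigma-N}$, which tends to $0$ as $\lambda\to\infty$ since $4/\sigma<N$; hence $\alpha_c\to +\infty$ as $c\to 0^+$.

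For step (ii), I would prove that for $\sigma\in\N$ the restricted infimum $\Gamma_{rad}(c):=\inf_{\mathcal M_{rad}(c)}E$ equals $\Gamma(c)$ and is achieved by some $u_c^{rad}$. The inequality $\Gamma_{rad}(c)\geq\Gamma(c)$ is obvious. For the reverse, given $u\in\mathcal M(c)$ (real-valued WLOG), define the Fourier rearrangement $u^*$ by $\widehat{u^*}:=|\hat u|^*$, the Schwarz symmetric-decreasing rearrangement of $|\hat u|$. Then $u^*$ is real and radial, $\|u^*\|_2=\|u\|_2$, and the radial monotonicity of $|k|^2$ and $|k|^4$ combined with Hardy--Littlewood (via the layer-cake formula) yields $\|\nabla u^*\|_2\leq\|\nabla u\|_2$ and $\|\Delta u^*\|_2\leq\|\Delta u\|_2$. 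Since $\sigma+1\in\N$, Plancherel gives $\|u\|_{2\sigma+2}^{2\sigma+2}=\|\hat u^{*(\sigma+1)}\|_2^2$ with $*k$ the $k$-fold convolution; combining the pointwise bound $|\hat u^{*(\sigma+1)}|\leq(|\hat u|)^{*(\sigma+1)}$ with the Brascamp--Lieb--Luttinger inequality then yields $\|u^*\|_{2\sigma+2}\geq\|u\|_{2\sigma+2}$. Hence $E(u^*_\lambda)\leq E(u_\lambda)$ for every $\lambda>0$, where $u_\lambda(x)=\lambda^{N/4}u(\sqrt{\lambda}x)$. Since $\sigma N\geq 4$, the map $\lambda\mapsto E(u_\lambda)$ admits a unique maximum, reached at $\lambda=1$ when $u\in\mathcal M(c)$ and at some $\lambda_*>0$ for $u^*$. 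Therefore
\[
\Gamma_{rad}(c)\leq E(u^*_{\lambda_*})=\max_{\lambda>0}E(u^*_\lambda)\leq\max_{\lambda>0}E(u_\lambda)=E(u)
\]
for every $u\in\mathcal M(c)$, giving $\Gamma_{rad}(c)\leq\Gamma(c)$. Existence of a minimizer $u_c^{rad}$ follows by the arguments of Theorems \ref{thm} and \ref{radN34}, via the compact embedding $H^2_{rad}(\R^N)\hookrightarrow L^{2\sigma+2}(\R^N)$ together with Lemmas \ref{conv} and \ref{sign-la}.

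Combining the two steps: $u_c^{rad}$ solves \eqref{4nls} with multiplier $\alpha_c^{rad}\to +\infty$ as $c\to c_0^+$, and by \cite[Theorem~3.7]{BoCaMoNa} every radial solution of \eqref{4nls} must be sign changing for $\alpha$ sufficiently large. Hence there exists $c_\pm>c_0$ such that for every $c\in(c_0,c_\pm)$, $u_c^{rad}$ is simultaneously a ground state, radial and sign changing. The main obstacle is step (ii): standard Schwarz rearrangement in physical space cannot be used because it does not control the $\|\Delta u\|_2^2$ term, so Fourier rearrangement is the natural substitute; the assumption $\sigma\in\N$ is used precisely to make $2\sigma+2$ an even integer, so that $\|u\|_{2\sigma+2}^{2\sigma+2}$ can be expressed via Plancherel as an $L^2$-norm of an iterated convolution and then controlled by Brascamp--Lieb--Luttinger.
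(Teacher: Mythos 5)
Your proposal follows essentially the same route as the paper's proof: Fourier rearrangement (with $\sigma\in\N$ entering exactly as you describe, via Plancherel, the iterated convolution identity and Brascamp--Lieb--Luttinger, i.e.\ \cite[Lemma A.1]{BoLe}) to produce a radial ground state, blow-up of the Lagrange multiplier as $c\to c_0^+$, and \cite[Theorem 3.7]{BoCaMoNa} to conclude the sign change. Two remarks. First, your step (ii) takes a small detour: the paper simply applies the rearrangement to the ground state $u_c$ furnished by Theorem \ref{thmmain}, notes $Q(u_c^{\sharp})\le Q(u_c)=0$, rescales by the $\lambda\in(0,1]$ of Lemma \ref{unique} and uses the identity $E=E-\tfrac{2}{\sigma N}Q$ on $\mathcal M(c)$ to force $\lambda=1$ and $E(u_c^{\sharp})=\Gamma(c)$; this avoids re-running the compactness argument for $\Gamma_{rad}(c)$, though your version is also correct. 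Second, and more substantively, your step (i) in the mass-supercritical case $4<\sigma N<4^*$ is only a scaling heuristic: computing $\|u\|_2^2\sim\lambda^{4/\sigma-N}$ for a rescaled \emph{fixed} profile shows that this family has small mass when $\alpha$ is large, but it does not show that \emph{every} ground state of small mass has a large multiplier, which is what you need. The paper closes this by combining the Pohozaev identity $Q(u_c)=0$ with the Gagliardo--Nirenberg inequality \eqref{G-N-H2-ineq} to obtain \eqref{nonexe2}, hence $\int_{\R^N}|\Delta u_c|^2\,dx\to\infty$ as $c\to0$, and then the identity \eqref{nonexe1} together with the interpolation inequality \eqref{interpolation} (i.e.\ \eqref{divergence}) to deduce $c\,\alpha_c\to\infty$. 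Your mass-critical computation ($\eps_n^4\alpha_n\to 4/(c_N^*N)$) is exactly \eqref{explosionbis} and is fine. With step (i) repaired along these lines the argument is complete.
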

Positive radial solutions to \eqref{Pc} do exist as well. However we are not able to prove that those are ground states.
\begin{thm}\label{thm11}
Let $1\leq N\leq 4$ and  $4 \leq \sigma N < \infty $. For sufficiently large $c >0$, \eqref{Pc} admits a positive and radial solution.
\end{thm}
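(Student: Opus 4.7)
The strategy is to construct a positive radial solution of \eqref{Pc} by tuning the Lagrange parameter $\alpha$: I use positive radial ground states of the fixed-frequency equation \eqref{4nls}, whose $L^2$-mass blows up as $\alpha\to 0^+$, and then select $\alpha$ so that the mass equals the prescribed $c$.

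By \cite[Theorem 3.9]{BoCaMoNa}, for every $\alpha\in(0,\alpha^*)$ with some $\alpha^*>0$ there exists a positive radial least-energy solution $u_\alpha\in H^2_{rad}(\R^N)$ of \eqref{4nls}. Setting $c(\alpha):=\|u_\alpha\|_2^2$, the goal is to show that $c(\cdot)$ attains every sufficiently large value, in which case $u_\alpha$ is exactly the positive radial solution of \eqref{Pc} asked for.

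The central analytic step is to prove that $c(\alpha)\to\infty$ as $\alpha\to 0^+$. The Nehari identity $\gamma\|\Delta u_\alpha\|_2^2+\|\nabla u_\alpha\|_2^2+\alpha\|u_\alpha\|_2^2=\|u_\alpha\|_{2\sigma+2}^{2\sigma+2}$, combined with a test-function bound on the mountain-pass level $m_\alpha=\frac{\sigma}{2\sigma+2}\|u_\alpha\|_{2\sigma+2}^{2\sigma+2}$, yields uniform upper bounds on $\|\Delta u_\alpha\|_2$, $\|\nabla u_\alpha\|_2$ and $\|u_\alpha\|_{2\sigma+2}$, and a uniform positive lower bound on $\|u_\alpha\|_{2\sigma+2}$. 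Arguing by contradiction, if $c(\alpha_n)$ stayed bounded along some $\alpha_n\to 0^+$, then $(u_{\alpha_n})$ would be bounded in $H^2_{rad}(\R^N)$; the compact embedding $H^2_{rad}\hookrightarrow L^{2\sigma+2}$ would produce a nontrivial nonnegative radial weak limit $u_0\in H^2$ solving $\gamma\Delta^2 u_0-\Delta u_0=u_0^{2\sigma+1}$. For $N\in\{3,4\}$ with $\sigma>2/(N-2)$, Proposition \ref{sharpdecay} forces $u_0(x)\sim C|x|^{2-N}$ at infinity with $C\neq 0$, contradicting $u_0\in L^2(\R^N)$. In the remaining subcritical range, in particular for $N=1,2$ and $N=3$ with $\sigma<2$, I would instead use the rescaling $u_\alpha(x)=\alpha^{1/(2\sigma)}v_\alpha(\sqrt\alpha\, x)$, where $v_\alpha$ solves $\gamma\alpha\Delta^2 v-\Delta v+v=v^{2\sigma+1}$; in the Sobolev-subcritical regime, $v_\alpha$ converges as $\alpha\to 0^+$ to the classical positive radial $H^1$-ground state $v_0$ of $-\Delta v+v=v^{2\sigma+1}$, so $\|v_\alpha\|_2\ge\tfrac12\|v_0\|_2>0$, and since $c(\alpha)=\alpha^{1/\sigma-N/2}\|v_\alpha\|_2^2$ with $1/\sigma-N/2<0$, one gets $c(\alpha)\to\infty$.

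To finish, I would show that $\alpha\mapsto c(\alpha)$ is continuous on $(0,\alpha^*)$: for $\alpha_n\to\alpha_\infty\in(0,\alpha^*)$ bounded away from $0$, the uniform $H^2$-bounds together with radial compactness give convergence (up to subsequence) to a positive radial least-energy solution at level $\alpha_\infty$, so $c(\alpha_n)\to c(\alpha_\infty)$. Combined with the blow-up from the previous step, the intermediate value theorem provides, for every $c$ above a threshold, some $\alpha\in(0,\alpha^*)$ with $\|u_\alpha\|_2^2=c$, and $u_\alpha$ is the required positive radial solution. The main obstacle is this continuity, which a priori requires uniqueness (up to sign) of the positive radial least-energy solution for small $\alpha$; if uniqueness is unavailable one must replace the IVT argument by a connectedness/topological one on the branch. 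A separate perturbative argument is also needed to cover the borderline Sobolev-critical pairs $(N,\sigma)=(3,2)$ and $(4,1)$, where neither Proposition \ref{sharpdecay} nor the subcritical rescaling applies directly and the limiting profile is an Aubin--Talenti bubble.
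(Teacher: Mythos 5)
Your strategy --- producing positive radial least-energy solutions of \eqref{4nls} for fixed small $\alpha>0$ and then tuning $\alpha$ so that the mass equals the prescribed $c$ --- is genuinely different from the paper's, and it founders exactly where you suspect it does. Since uniqueness (even up to sign and translation) of positive radial least-energy solutions of \eqref{4nls} is not known for this fourth-order problem, the map $\alpha\mapsto c(\alpha)=\|u_\alpha\|_2^2$ is only a possibly multivalued, possibly jumping selection, and the intermediate value theorem cannot be invoked. Showing that the set of attained masses contains a full half-line is precisely the content of the theorem, and the ``connectedness/topological argument on the branch'' you defer to is not supplied and is not available without substantial extra input. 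The limit analysis $c(\alpha)\to\infty$ as $\alpha\to0^+$ is fine in spirit: for $N=3$, $\sigma>2$ and $N=4$, $\sigma>1$ it is essentially Proposition \ref{sharpdecay} plus radial compactness, while in all the remaining cases with $1\le N\le 4$ the limit equation \eqref{4NLSalpha=0} has no nontrivial finite-energy solution at all by the Pohozaev identity (see the remark after Proposition \ref{minimizer}), so your separate rescaling argument and your worry about the pairs $(3,2)$ and $(4,1)$ are unnecessary. But this step only shows that \emph{some} sequence of masses tends to infinity, not that every large mass is attained.

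The paper avoids the continuity issue by working at prescribed mass from the start: it minimizes the modified functional $E^+$, in which $|u|^{2\sigma+2}$ is replaced by $|u^+|^{2\sigma+2}$, over the radial Pohozaev manifold $\mathcal M^+_{rad}(c)$; the compactness machinery behind Theorem \ref{radN34} then yields a radial ground state $u_c$ of the modified problem for \emph{every} $c>c_0$, with Lagrange multiplier $\alpha_c^+>0$. One then shows $\alpha_c^+\to0$ as $c\to\infty$, using $\alpha_c^+\le \frac{\sigma}{c(\sigma+1)}\|u_c^+\|_{2\sigma+2}^{2\sigma+2}$ together with uniform bounds coming from the monotonicity of $c\mapsto\Gamma^+(c)$, and for $4\gamma\alpha_c^+\le1$ the operator $\gamma\Delta^2-\Delta+\alpha_c^+$ factors into two second-order operators with nonnegative zeroth-order coefficients, so the strong maximum principle gives $u_c>0$; hence $u_c^+=u_c$ and $u_c$ solves \eqref{Pc}. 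To salvage your route you would need either a uniqueness or nondegeneracy result for $u_\alpha$ at small $\alpha$, or a degree- or continuation-type argument replacing the IVT; neither is currently at hand.
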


In the last part of the paper we investigate the dynamical  behaviour of the solutions to equation \eqref{4nlsdis}. The local well-posedness of the Cauchy problem is shown in \cite{Pa} for $ 0 <  \sigma N < 4^*$. In the mass subcritical case $0 < \sigma N < 4$,  the global existence for the Cauchy problem holds, see \cite{FiIlPa,Pa}  and it is conjectured that ground state solutions are orbitally stable. This is proved in \cite{BoCaMoNa} (see also \cite{NaPa}) under additional assumptions among which the fact that they are non degenerate, see \cite{BoCaMoNa} for a precise statement. We can show that, despite we are in the mass critical or mass supercritical cases, solutions to \eqref{4nlsdis} with initial data lying in some part of the space  
exist globally in time. We fix the notation 
$$
\mathcal{O}_c:=\{u\in S(c) : E(u)<\Gamma(c), \mbox{ and }  Q(u)>0  \}.
$$
\begin{thm}\label{globex}
Let $4 \leq \sigma N <4^*$. For any $c >c_0$, if the initial datum $\psi_0 \in 
\mathcal{O}_c
$, then the solution $\psi \in C([0,T); H^2 (\R^N))$ to \eqref{4nlsdis} 
exists globally in time.
\end{thm}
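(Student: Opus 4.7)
The plan is to combine Pausader's local well-posedness in $H^2(\R^N)$ with the conservation of mass and energy, an invariance argument for $\mathcal{O}_c$ along the flow of \eqref{4nlsdis}, and a uniform $H^2$-bound that rules out finite-time blow-up through the standard alternative ($T<\infty$ would force $\|\Delta\psi(t)\|_2\to\infty$).

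First I would show that $\mathcal{O}_c$ is invariant under the flow, i.e.\ $\psi(t)\in\mathcal{O}_c$ for every $t\in[0,T)$. Mass and energy conservation give $\psi(t)\in S(c)$ with $E(\psi(t))=E(\psi_0)<\Gamma(c)$ automatically, so the point is to verify that $Q(\psi(t))>0$. Because $\psi\in C([0,T);H^2(\R^N))$ and $Q$ is continuous on $H^2(\R^N)$, the real-valued map $t\mapsto Q(\psi(t))$ is continuous with $Q(\psi_0)>0$. If it ever vanished, at the first such time $t_1\in(0,T)$ we would have $\psi(t_1)\in S(c)$ (in particular $\psi(t_1)\neq 0$ since $\|\psi(t_1)\|_2^2=c>0$) with $Q(\psi(t_1))=0$, hence $\psi(t_1)\in\mathcal{M}(c)$ and $E(\psi(t_1))\geq\Gamma(c)$, contradicting $E(\psi(t_1))=E(\psi_0)<\Gamma(c)$.

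Next I would derive the a priori bound on $\|\Delta\psi(t)\|_2^2$. Forming the combination $E(\psi(t))-\tfrac{2}{\sigma N}Q(\psi(t))$ eliminates the $L^{2\sigma+2}$-contribution and yields
\begin{equation*}
\gamma\left(\tfrac{1}{2}-\tfrac{2}{\sigma N}\right)\|\Delta\psi(t)\|_2^2+\left(\tfrac{1}{2}-\tfrac{1}{\sigma N}\right)\|\nabla\psi(t)\|_2^2
=E(\psi_0)-\tfrac{2}{\sigma N}Q(\psi(t))<E(\psi_0).
\end{equation*}
For $4<\sigma N<4^*$ both coefficients on the left are strictly positive, so $\|\Delta\psi(t)\|_2^2$ stays uniformly bounded on $[0,T)$ and the blow-up alternative gives $T=\infty$. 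In the mass-critical case $\sigma N=4$ the $\|\Delta\|_2^2$ coefficient degenerates and the identity only returns $\|\nabla\psi(t)\|_2^2<4E(\psi_0)$; coupling this bound with $\|\psi(t)\|_2^2=c$ and the Gagliardo-Nirenberg inequality \eqref{G-N-H2-ineq}, together with a fractional Sobolev interpolation between $H^1(\R^N)$ and $H^2(\R^N)$ when $N\geq 5$, controls $\|\psi(t)\|_{2\sigma+2}^{2\sigma+2}$, and substitution into the energy identity still produces a uniform bound on $\|\Delta\psi(t)\|_2^2$.

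The main obstacle I anticipate is the mass-critical regime: the clean cancellation $E-\tfrac{1}{2}Q=\tfrac{1}{4}\|\nabla\psi\|_2^2$ controls only the $H^1$-norm, so the $H^2$-bound has to be recovered indirectly through the nonlinear term and energy conservation, and one has to be careful with the interpolation in high dimensions where $H^1(\R^N)\not\hookrightarrow L^{2\sigma+2}(\R^N)$.
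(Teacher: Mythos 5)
Your proposal is correct and follows essentially the same route as the paper: both arguments rest on the identity $E-\tfrac{2}{\sigma N}Q=\gamma\tfrac{\sigma N-4}{2\sigma N}\|\Delta u\|_2^2+\tfrac{\sigma N-2}{2\sigma N}\|\nabla u\|_2^2$ together with the fact that $Q$ cannot vanish while the conserved energy stays below $\Gamma(c)$ (you phrase this as invariance of $\mathcal{O}_c$; the paper runs the contradiction through the blow-up alternative, forcing $Q\to-\infty$ and hence a zero crossing). One small correction for the case $\sigma N=4$: the inequality \eqref{G-N-H2-ineq} alone cannot close the argument, since it gives $\|u\|_{2+8/N}^{2+8/N}\le B_N(\tfrac4N)c^{4/N}\|\Delta u\|_2^2$ with the exponent exactly $2$ and $c>c_N^*$, so the term is not absorbable; what is needed (and what the paper invokes as Lemma \ref{boundcritical}) is the $H^1$--$H^2$ interpolation \eqref{G-N-H1-ineq} for $N\le 3$ and \eqref{G-N-H1-ineq2} for $N\ge 4$ (not only $N\ge 5$), where the exponent $(N-2)(\sigma+1)-N=2-\tfrac8N<2$ on $\|\Delta u\|_2$ makes the absorption possible.
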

We also prove that the radial ground states 
are unstable by blow-up.

\begin{definition}\label{defunsta}
We say that a solution $ u \in H^2(\R^N)$ to \eqref{4nls} is unstable by blow-up in finite (respectively infinite) time if, for all $\varepsilon>0$, there exists $v\in H^2 (\R^N)$ such that 
$\|v - u \|_{H^2}<\varepsilon$ and the solution $\phi(t)$ to \eqref{4nlsdis} with initial data $\phi (0)=v$ blows up in finite (respectively infinite) time in the $H^2$ norm.
\end{definition}

\begin{thm} \label{unstable}
Let $4 \leq \sigma N < 4^*$ and $N \geq 2$. Then the standing waves associated to radial ground states 
are unstable by blow-up in finite or infinite time. Moreover, if $\sigma \leq 4$, then they are unstable by blow-up in finite time.
\end{thm}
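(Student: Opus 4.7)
The plan is to combine the variational characterization of the radial ground state $u_c \in \mathcal{M}(c)$ with a localized virial obstruction in the spirit of Boulenger--Lenzmann \cite{BoLe}. First, I would exploit the dilation family \eqref{dilatation}: from \eqref{derivationofQ} one has $\partial_\lambda E(u_{c,\lambda})|_{\lambda=1} = Q(u_c) = 0$, and a direct computation using \eqref{functional} shows that the second derivative at $\lambda = 1$ equals $\frac{4-\sigma N}{2}\gamma\|\Delta u_c\|_2^2 - \frac{\sigma N - 2}{4}\|\nabla u_c\|_2^2$, which is strictly negative as soon as $\sigma N \geq 4$. Hence $\lambda = 1$ is a strict local maximum of $\lambda \mapsto E(u_{c,\lambda})$, so for every $\lambda > 1$ close enough to $1$ one has
\[
\|u_{c,\lambda}\|_2^2 = c, \qquad E(u_{c,\lambda}) < \Gamma(c), \qquad Q(u_{c,\lambda}) < 0,
\]
with $u_{c,\lambda} \to u_c$ in $H^2(\R^N)$ as $\lambda \downarrow 1$. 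This provides perturbations of $u_c$ inside the open set $\mathcal{O}_c^- := \{ v \in S(c) : E(v) < \Gamma(c),\ Q(v) < 0 \}$ arbitrarily close in $H^2$, which is the pool of initial data I would use.

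The next step is to show that $\mathcal{O}_c^-$ is invariant under the flow of \eqref{4nlsdis} and that $Q$ stays uniformly negative along trajectories. Let $\psi(t)$ be the maximal $H^2$-solution with $\psi(0) = u_{c,\lambda}$. Conservation of mass and energy gives $\psi(t) \in S(c)$ with $E(\psi(t)) < \Gamma(c)$ throughout its lifespan. If $Q(\psi(t_0)) = 0$ at some first $t_0 > 0$, then $\psi(t_0) \in \mathcal{M}(c)$, forcing $E(\psi(t_0)) \geq \Gamma(c)$, a contradiction. Hence $Q(\psi(t)) < 0$ on the full lifespan, and a projection of $\psi(t)$ onto the dilation fiber (replacing it by the unique $\psi(t)_{\mu^{\ast}(t)} \in \mathcal{M}(c)$ with $\mu^{\ast}(t) < 1$) combined with the strict concavity at the maximum of $\lambda \mapsto E(\psi(t)_\lambda)$ yields a uniform gap $Q(\psi(t)) \leq -\delta$ for some $\delta > 0$ depending only on $\Gamma(c) - E(u_{c,\lambda})$.

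To close the argument, I would invoke the localized radial virial identity for \eqref{4nlsdis}. For a smooth radial cut-off $\phi_R$ equal to $|x|^2$ on $B_R$ with bounded higher derivatives, set $V_R(t) := \int_{\R^N} \phi_R(x)|\psi(t,x)|^2\, dx$. Differentiating twice in $t$ along \eqref{4nlsdis} yields an expression of the form $V_R''(t) = 8\, Q(\psi(t)) + \mathcal{E}_R(\psi(t))$, where $\mathcal{E}_R$ collects error terms supported outside $B_R$ and commutators between $\phi_R$ and the bi-Laplacian. Using the radial Strauss embedding and the conserved $H^2$-energy, $\mathcal{E}_R$ is bounded by an expression of the form $\varepsilon(R) + CR^{-\theta}\|\Delta\psi(t)\|_2^{\eta}$, with $\varepsilon(R) \to 0$ as $R \to \infty$ and with exponents $\theta, \eta > 0$ determined by $(N,\sigma)$. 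In the subthreshold range $\sigma \leq 4$ the exponent $\eta$ lies below the scaling budget granted by conservation of energy, so for $R$ large one obtains $V_R''(t) \leq -4\delta$ uniformly in $t$; two integrations then force $V_R(t) \to -\infty$ in finite time, contradicting $V_R \geq 0$ and giving finite-time blow-up. In the full range $4 \leq \sigma N < 4^*$ without the extra restriction, the same bound still rules out a uniformly $H^2$-bounded global solution, yielding blow-up in finite or infinite time.

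The principal obstacle is the explicit derivation and estimation of the remainder $\mathcal{E}_R$: the bi-Laplacian generates commutator terms with derivatives of $\phi_R$ up to order four that are absent from the classical NLS virial, and controlling them requires a careful choice of cut-off together with a sharp use of the Gagliardo--Nirenberg inequality \eqref{G-N-H2-ineq} restricted to $H^2_{rad}(\R^N)$. This is precisely the source of the dichotomy in the statement: the threshold $\sigma = 4$ is exactly the exponent beyond which the available radial Strauss-type estimates no longer suffice to absorb all bi-Laplacian error terms uniformly in time, so only blow-up in finite or infinite time can then be concluded.
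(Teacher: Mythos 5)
Your overall architecture coincides with the paper's: perturb the radial ground state along the dilation fiber $u_{c,\lambda}$ with $\lambda>1$ to enter the set $\{E<\Gamma(c),\ Q<0\}$, use the conservation laws and the variational characterization of $\Gamma(c)$ to show this set is invariant and that $Q(\psi(t))\le -\beta$ uniformly (your concavity estimate along the fiber is exactly the paper's First Step), and then feed this into the Boulenger--Lenzmann localized virial inequality. Up to that point the proposal is correct and matches the paper.

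The gap is in the step where you claim the virial remainder is bounded by $\varepsilon(R)+CR^{-\theta}\|\Delta\psi(t)\|_2^{\eta}$ ``using the radial Strauss embedding and the conserved $H^2$-energy'', and hence that $V_R''(t)\le -4\delta$ uniformly in $t$. Conservation of energy does \emph{not} give an a priori bound on $\|\Delta\psi(t)\|_2$ (if it did there would be no blow-up to prove), so the error terms $O\bigl(\|\nabla\psi(t)\|_2^2R^{-2}+\|\nabla\psi(t)\|_2^{\sigma}R^{-\sigma(N-1)}\bigr)$ cannot be made uniformly small by taking $R$ large. The mechanism that actually closes the argument, and the true reason for the threshold $\sigma\le 4$, is a dichotomy on the size of $\|\nabla\psi(t)\|_2$: when $(\sigma N-2)\|\nabla\psi(t)\|_2^2\le 4N\sigma E(\psi_0)$ the errors are harmless and the main term is a fixed negative constant; when $\|\nabla\psi(t)\|_2$ is large, the interpolation $\|\Delta\psi\|_2^2\ge \|\nabla\psi\|_2^4/\|\psi\|_2^2$ extracts from the virial inequality a term $-c\|\nabla\psi(t)\|_2^4$ (or $-c\|\nabla\psi(t)\|_2^2$ when $\sigma N=4$) that dominates the errors precisely because $\sigma\le 4$ (resp. $\sigma\le 2$). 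This yields $\frac{d}{dt}M_{\varphi_R}[\psi(t)]\le -\delta\|\nabla\psi(t)\|_2^2$, which is then closed via the Cauchy--Schwarz bound $|M_{\varphi_R}[\psi]|\le C\|\nabla\psi\|_2$ and the Riccati inequality $z'\ge \tau^2 z^2$, not by two integrations of a uniformly negative second derivative. A more minor issue: for the fourth-order flow, $\frac{d}{dt}\int\varphi_R|\psi|^2$ is not $2\,\mathrm{Im}\int\bar\psi\,\nabla\varphi_R\cdot\nabla\psi$ up to small errors --- the bi-Laplacian already contributes at the level of the first derivative --- which is why the paper works directly with the quantity $M_{\varphi_R}$. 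Finally, for $\sigma>4$ you assert the conclusion in one sentence; the paper's argument there is a separate dichotomy on whether $\|\nabla\psi(t)\|_2$ stays bounded, yielding either unboundedness of $\|\Delta\psi(t)\|_2$ or the growth $\|\Delta\psi(t)\|_2^2\ge C(t-t_1)^2$, and this needs to be spelled out.
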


In the case where $\alpha \in \R$ is fixed in \eqref{4nls} the fact that least energy solutions are unstable by blow-up in  finite time was recently established for $4 \leq \sigma N < 4^*$ in \cite{BoCaGoJe}. It should be noted that the results of \cite{BoCaGoJe} and of this paper are strongly based on arguments due to Boulenger and Lenzmann \cite{BoLe}. \medskip

We now describe the organization of the paper. In Section \ref{preresults} we present some preliminary results and give the proof of Theorem \ref{thm1}. In Section \ref{sectmanifold}, we establish some properties of the manifold $\mathcal{M}(c)$, and, in particular, we show that it is possible to find a Palais-Smale sequence $(u_n)_n\subset \mathcal{M}(c)$ for $E$ restricted to 
$\mathcal{S}(c)$, at level $\Gamma(c)$, see Lemma \ref{psbis}. In Section \ref{supercritical} we give the proof of 
Theorem \ref{thmmain}. Section \ref{behavior} is devoted to properties of the map $c \mapsto \Gamma(c)$ which are summarized in Theorem \ref{gammmaprop}.  In Section \ref{multiplicity}, we deal with radial solutions and establish Theorem \ref{thm} and Theorem \ref{radN34}. In Section  \ref{concentrationpart}  we prove the concentration result, namely Theorem \ref{concentration}. Section  \ref{special} contains the proofs of Theorem \ref{thm10} and Theorem \ref{thm11}. 
In Section  \ref{dispersiveeq}, we deal with the stability issues and prove Theorem \ref{globex} and Theorem \ref{unstable}. Finally in the Appendix we show that any solution $u \in H^2(\R^N)$ to (\ref{4nls}) satisfies the Derrick-Pohozaev identity $Q(u)=0$ and that all solutions of the limit problem (\ref{4NLSalpha=0}) belong to $H^2(\R^N)$ when $N \geq 5$.

\begin{notation}
The Sobolev space $H^2(\R^N)$ is endowed with its standard norm
$$
\|u\|^2 := \int_{\R^N}|\Delta u|^2 + |\nabla u|^2+|u|^2 \, dx.
$$
We use the notation $H^{-2}(\R^N)$ for the dual space to $H^2(\R^N)$. We denote by $'\rightarrow'$, respectively by $'\rightharpoonup'$, the strong convergence, respectively the  weak convergence in corresponding space, and denote by $B_R(x)$ the ball in $\R^N$ of center $x$ and radius $R>0.$  Throughout the paper we assume that $N \geq 1$ unless stated the contrary. We use the notation $o_n(1)$ for any quantity which tends to zero as $n \to \infty$.
\end{notation}

\section{Preliminary results and Proof of Theorem \ref{thm1}. } \label{preresults}

By interpolation and using the Sobolev inequalities, we infer that there exists $C_N(\sigma)>0$ such that for every $u \in H^2(\R^N)$,  (see e.g. \cite[Theorem in Lecture II]{Nir}),
\begin{equation}\label{G-N-H1-ineq}
 \|u\|^{2\sigma+2}_{2\sigma+2}\leq C_N(\sigma)\|\nabla u\|^{{\sigma N}}_2\|u\|^{2+\sigma(2-N)}_2,
\end{equation}
where
\begin{align*}
     \begin{cases}
     &0\leq\sigma, \hspace{.5cm} \mbox{if}\ N\leq 2,\\
     &0\leq\sigma<\dfrac{2}{N-2}, \hspace{.5cm} \mbox{if}\ N \geq 3,
     \end{cases}
\end{align*}
and (see for instance \cite[Lemma 2.1]{BoCaMoNa}),
\begin{equation}\label{G-N-H1-ineq2}
 \|u\|^{2\sigma+2}_{2\sigma+2}\leq C_N(\sigma)\|\nabla u\|^{N-(\sigma +1)(N-4)}_2\|\Delta u\|^{(N-2)(\sigma +1)-N}_2,
\end{equation}
where
\begin{align*}
     \begin{cases}
     &\dfrac{2}{N-2}\leq\sigma, \hspace{.5cm} \mbox{if}\ N = 3,4,\\
     &\dfrac{2}{N-2}\leq\sigma<\dfrac{4}{N-4}, \hspace{.5cm} \mbox{if}\ N \geq 5.
     \end{cases}
\end{align*}
We shall also often make use of the following interpolation inequality 
\begin{equation}\label{interpolation}
\int_{\R^N} |\nabla u|^2 dx \leq \left( \int_{\R^N}|\Delta u|^2 dx \right)^{\frac{1}{2}} \left( \int_{\R^N}| u|^2 dx \right)^{\frac{1}{2}}, \quad \mbox{for every} \quad u \in H^2(\R^N).
\end{equation}
Finally, for future reference, note that when $\sigma N =4$, one has
\begin{equation}\label{ajout2}
\frac{N}{N+4}\int_{\R^N}|u|^{2 +\frac{8}{N}} \, dx \leq  \left(\frac {c}{c^*_N}\right)^{\frac 4N}  \gamma \int_{\R^N}|\Delta u|^2 \, dx  \quad \mbox{for every} \quad u \in H^2(\R^N).
\end{equation}
Indeed, \eqref{ajout2} follows from the Gagliardo-Nirenberg inequality \eqref{G-N-H2-ineq} using the fact that 
$c_N^* = (\gamma C(N))^{\frac{N}{2}}$ where $C(N)$ is given in \eqref{cn},


\begin{lem} \label{sign-la}
Let $4 \leq \sigma N < 4^*$. Assume $u_c\in H^2(\R^N)$ solves
\begin{equation}\label{4nlsbis}
 \gamma \Delta^2 u - \Delta u + \alpha_c u = |u|^{2 \sigma}u
\end{equation}
with $\|u_c\|_2^2=c>0$. Then there exists $c_{N, \sigma}>0$ such that $ \alpha_c>0$ for any $c\in (0,c_{N, \sigma})$. Moreover,
\begin{enumerate}[(i)]
\item $c_{1,\sigma}=c_{2,\sigma}=\infty$ for every $\sigma\ge 4/N$, $ c_{3,\sigma}=\infty$ if  $4/3 \le\sigma \leq 2$ and $c_{4, 1}= \infty$;
\item  If $\sigma N=4$ then
$c_{N, \sigma} \geq  \left(\frac{N}{N-4}\right)^{\frac N4}c^*_N$ for $N \geq 5$.
\end{enumerate}
\end{lem}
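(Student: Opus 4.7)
The plan is to test equation \eqref{4nlsbis} against $u_c$ and combine the resulting identity with the Derrick-Pohozaev identity $Q(u_c)=0$ (proved in the Appendix). Testing produces
$$
\gamma\|\Delta u_c\|_2^2+\|\nabla u_c\|_2^2+\alpha_c c=\|u_c\|_{2\sigma+2}^{2\sigma+2},
$$
while $Q(u_c)=0$ reads $\gamma\|\Delta u_c\|_2^2+\tfrac12\|\nabla u_c\|_2^2=\tfrac{\sigma N}{2(2\sigma+2)}\|u_c\|_{2\sigma+2}^{2\sigma+2}$. Eliminating the nonlinear term between these two relations gives, after elementary algebra,
$$
\alpha_c c=A\,\gamma\|\Delta u_c\|_2^2-B\,\|\nabla u_c\|_2^2,\qquad A:=\frac{4\sigma+4-\sigma N}{\sigma N},\quad B:=\frac{(N-2)\sigma-2}{\sigma N}.
$$
Under the assumption $\sigma N<4^{\ast}$ one checks that $A>0$ always, whereas $B\leq 0$ precisely when $N\leq 2$, or $N=3$ with $4/3\leq\sigma\leq 2$, or $N=4$ with $\sigma=1$. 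In each of these subcases one has $\alpha_c c\geq A\gamma\|\Delta u_c\|_2^2>0$, because any $u\in H^2(\R^N)$ with $\Delta u=0$ vanishes; this proves (i) with $c_{N,\sigma}=\infty$.

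When $B>0$, I would invoke the interpolation inequality \eqref{interpolation} in the form $\|\nabla u_c\|_2^2\leq \sqrt c\,\|\Delta u_c\|_2$ to reduce the positivity of $\alpha_c$ to the quantitative lower bound $A\gamma\|\Delta u_c\|_2>B\sqrt c$. In the mass-supercritical regime $\sigma N>4$, such a bound follows from \eqref{G-N-H2-ineq}: feeding the Gagliardo-Nirenberg inequality into $Q(u_c)=0$ yields
$$
\gamma\|\Delta u_c\|_2^2\leq \frac{\sigma N\,B_N(\sigma)}{2(2\sigma+2)}\,\|\Delta u_c\|_2^{\sigma N/2}\,c^{(2+2\sigma-\sigma N/2)/2}.
$$
Since $u_c\neq 0$ implies $\|\Delta u_c\|_2>0$ and $\sigma N/2-2>0$, this rearranges to $\|\Delta u_c\|_2\geq C_1 c^{-q}$ for explicit positive $C_1=C_1(\gamma,N,\sigma)$ and $q=q(N,\sigma)$. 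Comparing with the threshold $B\sqrt c/(A\gamma)$ and solving for $c$ furnishes an explicit $c_{N,\sigma}>0$ below which $\alpha_c>0$.

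In the mass-critical case $\sigma N=4$ with $N\geq 5$, the previous scheme degenerates because $\sigma N/2-2=0$; here I would use the tailored inequality \eqref{ajout2} instead. Substituting it into the Pohozaev identity yields
$$
\tfrac12\|\nabla u_c\|_2^2\leq \bigl[(c/c_N^{\ast})^{4/N}-1\bigr]\,\gamma\|\Delta u_c\|_2^2,
$$
which in particular forces $c>c_N^{\ast}$, consistent with Theorem \ref{thm1}. Inserting this into the identity for $\alpha_c c$ with $A=4/N$, $B=(N-4)/(2N)$ and simplifying gives
$$
\alpha_c c\geq\frac{\gamma\|\Delta u_c\|_2^2}{N}\Bigl[N-(N-4)(c/c_N^{\ast})^{4/N}\Bigr],
$$
which is strictly positive exactly when $c<(N/(N-4))^{N/4}\,c_N^{\ast}$, yielding the quantitative bound in (ii).

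Conceptually there is no real obstacle: the argument is bookkeeping of the test identity, the Pohozaev identity, the interpolation inequality and the sharp Gagliardo-Nirenberg inequality. The main care required is to track the sharp constant $B_N(4/N)$ through $c_N^{\ast}$ so that the mass-critical threshold in (ii) comes out with the advertised value $(N/(N-4))^{N/4}c_N^{\ast}$.
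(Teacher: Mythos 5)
Your proposal is correct and follows essentially the same route as the paper: combining the identity obtained by testing against $u_c$ with the Pohozaev identity $Q(u_c)=0$ to get $\alpha_c c=A\gamma\|\Delta u_c\|_2^2-B\|\nabla u_c\|_2^2$, disposing of case (i) by the sign of $B$, handling $\sigma N>4$ via the interpolation inequality \eqref{interpolation} together with the Gagliardo--Nirenberg lower bound $\|\Delta u_c\|_2\to\infty$ as $c\to 0$, and handling $\sigma N=4$, $N\ge 5$ via \eqref{ajout2}. The only cosmetic difference is in (ii), where you feed \eqref{ajout2} into the Pohozaev identity to control $\|\nabla u_c\|_2^2$ while the paper first eliminates the gradient term (its identity \eqref{p3}) and then applies \eqref{ajout2} to the nonlinear term; both give the same threshold $\left(\frac{N}{N-4}\right)^{N/4}c_N^*$.
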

\begin{proof}
We infer from Lemma \ref{Pohozaevs} in the Appendix that $Q(u_c)=0$. Therefore, we have 
\begin{equation}
\label{poho}
\gamma \int_{\R^N}|\Delta u_c|^2\, dx+ \frac 12\int_{\R^N}|\nabla u_c|^2\, dx=\dfrac{\sigma N}{2(2\sigma +2)}\int_{\R^N}|u_c|^{2\sigma+2}\, dx.
\end{equation}
Also multiplying \eqref{4nlsbis} by $u_c$ and integrating we get
\begin{align}\label{id1}
\gamma\int_{\R^N} |\Delta u_c|^2 \,dx +  \int_{\R^N}|\nabla u_c|^2 \, dx
 + \alpha_c \int_{\R^N}|u_c|^2\,dx=\int_{\R^N}|u_c|^{2\sigma +2} \, dx.
\end{align}
Combining \eqref{poho} and \eqref{id1} gives
\begin{equation}
\label{nonexe1}
c \alpha_c  = \gamma \left(\dfrac{4\sigma +4}{\sigma N}-1 \right)\int_{\R^N}|\Delta u_c|^2\, dx+ \left(\dfrac{2\sigma +2}{\sigma N} -1\right)\int_{\R^N}|\nabla u_c|^2\, dx.
\end{equation}
This obviously implies that $\alpha_c>0$ for any $c>0$ provided that either $N=1,2$ or $N=3$ with $\sigma \leq 2$ or $N=4$ with $\sigma =1$.

For the remaining cases, using the Gagliardo-Nirenberg inequality \eqref{G-N-H2-ineq}, we get from \eqref{poho} that
$$
\gamma \int_{\R^N}|\Delta u_c|^2\, dx \leq  \dfrac{\sigma N}{2(2\sigma +2)} B_N(\sigma) c^{1+\sigma(1 -\frac{N}{4})} \left(\int_{\R^N}|\Delta u_c|^2\, dx \right)^{\frac{\sigma N}{4}},
$$%
which implies
\begin{equation}
\label{nonexe2}
\left(\int_{\R^N}|\Delta u_c|^2\, dx \right)^{1 - \frac{\sigma N}{4}}\leq  \dfrac{\sigma N}{2(2\sigma +2)} \dfrac{B_N(\sigma)}{\gamma} c^{1+\sigma(1 -\frac{N}{4})}.
\end{equation}%
Thus, when  $4 < \sigma N < 4^*$, one deduces that
\begin{align}\label{limitc}
\int_{\R^N} |\Delta u_c|^2 \, dx \rightarrow \infty \,\, \text{as}\, \, c \rightarrow 0.
\end{align}
On the other hand,  using \eqref{interpolation}, we get from  \eqref{nonexe1}  that
\begin{equation}\label{divergence}
c\, \alpha_c  
\geq \gamma \left( \dfrac{4\sigma +4}{\sigma N}-1\right)\int_{\R^N}|\Delta u_c|^2\, dx +  \left(\dfrac{2\sigma +2}{\sigma N} -1\right) \left(\int_{\R^N}|\Delta u_c|^2\, dx \right)^{\frac{1}{2}} c^{\frac{1}{2}},
\end{equation}
and taking  \eqref{limitc} into account, it follows that $\alpha_c>0$ provided that $c>0$ is small enough. 

It remains to treat the case $\sigma N =4$ with $N \geq 5$.  Since (\ref{poho}) and (\ref{id1}) yield
\begin{align} \label{p3}
c\, \alpha_c 
= \gamma \int_{\R^N}|\Delta u_c|^2 \, dx - \frac{N-4}{N+4} \int_{\R^N}|u_c|^{2 + \frac 8N} \, dx,
\end{align}
  we deduce, from \eqref{ajout2},  that
$$
c\, \alpha_c  \geq \left(1-\frac {N -4}{N} \left(\frac {c}{c^*_N}\right)^{\frac 4N} \right) \gamma \int_{\R^N}|\Delta u_c|^2 \, dx.
$$
This shows $\alpha_c >0$ for $c <\left(\frac{N}{N-4}\right)^{\frac N4}c^*_N.$
\end{proof}

We now show that the two quadratic terms in the energy functional behave somehow in a similar manner. This observation is later used only to treat the case $\sigma N =4$ but we state it here under more general assumptions. 

\begin{lem}\label{boundcritical}
Let $4 \leq \sigma N < 4^*$ Assume that $(u_n)_n \subset S(c_n)$ is a sequence such that $(c_n)_n \subset \R$ is positive and bounded and $(E(u_n))_n \subset \R$ is bounded. Then
\begin{equation}\label{equivlapgrad0}
\left( \int_{\R^N} |\nabla u_n|^2 dx \right)_n \subset \R \quad  \mbox{is bounded  if and only if}  \quad \left( \int_{\R^N} |\Delta u_n|^2 dx \right)_n \subset \R \mbox{ is bounded}.  
\end{equation}
If, in addition, $(u_n)_n \subset \mathcal M(c_n)$, then,
\begin{enumerate}[(i)]
\item  if  $4 \leq \sigma N$ and $N=1,2$  or $4 \leq \sigma N < \frac{2N}{N-2}$ and $N = 3$, we have
\begin{equation}\label{equivlapgrad1}
 \frac1{c_n}\left( \int_{\R^N}|\nabla u_n|^2 \, dx \right)^2\le \int_{\R^N} |\Delta u_n|^2 \, dx  \le \frac{\sigma N C_N(\sigma)}{2\gamma( 2\sigma +2)}  c_n^{\frac{2-\sigma(N-2)}2}\left( \int_{\R^N}|\nabla u_n|^2 \, dx \right)^{\frac{\sigma N}2};
\end{equation}
\item if  $\frac{2N}{N-2}\leq \sigma N$ and $N=3,4$ or $ \sigma N < \frac{4N}{N-2}$  and $N \geq 5$, we have
\begin{equation}\label{equivlapgrad2}
 \frac1{c_n}\left( \int_{\R^N}|\nabla u_n|^2 \, dx \right)^2\le \int_{\R^N} |\Delta u_n|^2 \, dx  \le \left(\frac{\sigma N C_N(\sigma)}{2\gamma( 2\sigma +2)} \right)^{\frac2{4-\sigma(N-2)}} \left( \int_{\R^N}|\nabla u_n|^2 \, dx \right)^{\frac{4-\sigma(N-4)}{4-\sigma(N-2)}}.
\end{equation}
\end{enumerate}
 
\end{lem}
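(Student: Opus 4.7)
The plan is to establish the three assertions separately. For the equivalence \eqref{equivlapgrad0}, one direction is immediate from \eqref{interpolation}: the inequality $\int_{\R^N}|\nabla u_n|^2\,dx \leq \left(\int_{\R^N}|\Delta u_n|^2\,dx\right)^{1/2} c_n^{1/2}$ shows that boundedness of $\int_{\R^N}|\Delta u_n|^2\,dx$ together with that of $c_n$ implies boundedness of $\int_{\R^N}|\nabla u_n|^2\,dx$. For the converse direction, I would use the identity
\[
\frac{\gamma}{2}\int_{\R^N}|\Delta u_n|^2\,dx \,=\, E(u_n) - \frac{1}{2}\int_{\R^N}|\nabla u_n|^2\,dx + \frac{1}{2\sigma+2}\int_{\R^N}|u_n|^{2\sigma+2}\,dx,
\]
so the task reduces to controlling $\int_{\R^N}|u_n|^{2\sigma+2}\,dx$. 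The Gagliardo--Nirenberg inequality \eqref{G-N-H1-ineq} suffices in the $H^1$-subcritical range; in the $H^1$-supercritical range one must use \eqref{G-N-H1-ineq2} and absorb the $\int_{\R^N}|\Delta u_n|^2\,dx$ factor appearing on its right-hand side back into the left-hand side. This absorption is the main technical obstacle and succeeds in the regime $\sigma(N-2)<4$ under which the lemma will be invoked.

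The lower bounds common to \eqref{equivlapgrad1} and \eqref{equivlapgrad2} are obtained simply by squaring \eqref{interpolation}: from $\int_{\R^N}|\nabla u_n|^2\,dx \leq \left(\int_{\R^N}|\Delta u_n|^2\,dx\right)^{1/2}c_n^{1/2}$, one reads off $c_n^{-1}\left(\int_{\R^N}|\nabla u_n|^2\,dx\right)^2 \leq \int_{\R^N}|\Delta u_n|^2\,dx$.

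For the upper bounds, the key additional input is the Derrick--Pohozaev identity $Q(u_n)=0$, valid on $\mathcal{M}(c_n)$, which after discarding the nonnegative gradient term yields
\[
\gamma \int_{\R^N}|\Delta u_n|^2\,dx \leq \frac{\sigma N}{2(2\sigma+2)}\int_{\R^N}|u_n|^{2\sigma+2}\,dx.
\]
In case (i), the couple $(N,\sigma)$ lies in the range of validity of \eqref{G-N-H1-ineq}, whose direct substitution yields \eqref{equivlapgrad1} at once. In case (ii), \eqref{G-N-H1-ineq2} gives
\[
\gamma \int_{\R^N}|\Delta u_n|^2\,dx \leq \frac{\sigma N C_N(\sigma)}{2(2\sigma+2)}\left(\int_{\R^N}|\nabla u_n|^2\,dx\right)^{(4-\sigma(N-4))/2}\left(\int_{\R^N}|\Delta u_n|^2\,dx\right)^{(\sigma(N-2)-2)/2}.
\]
The defining range of (ii) ensures $4-\sigma(N-2)>0$, so I can divide through by $\left(\int_{\R^N}|\Delta u_n|^2\,dx\right)^{(\sigma(N-2)-2)/2}$ and raise the resulting inequality to the power $2/(4-\sigma(N-2))$, recovering exactly \eqref{equivlapgrad2}.
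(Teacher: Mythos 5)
Your proposal is correct and follows essentially the same route as the paper: the interpolation inequality \eqref{interpolation} gives the lower bounds and the reverse implication, the energy bound combined with \eqref{G-N-H1-ineq} or \eqref{G-N-H1-ineq2} (with the absorption of the $\int|\Delta u_n|^2$ factor, valid precisely when $\sigma(N-2)<4$, i.e.\ $\frac{N-2}{2}(\sigma+1)-\frac{N}{2}<1$) gives the direct implication, and the identity $Q(u_n)=0$ together with the same Gagliardo--Nirenberg inequalities yields \eqref{equivlapgrad1} and \eqref{equivlapgrad2}. Your explicit division-and-exponentiation step in case (ii) is exactly the algebra the paper leaves implicit, and your remark that the absorption requires $\sigma(N-2)<4$ matches the paper's condition.
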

\begin{proof}
The left inequalities in \eqref{equivlapgrad1} and \eqref{equivlapgrad2} just come from the interpolation inequality (\ref{interpolation}). Together with the boundedness of the sequence $(c_n)_n$, they imply the reverse implication in \eqref{equivlapgrad0}.

We now focus on the direct implication in \eqref{equivlapgrad0}. Using the definition of $E$ and since $(E(u_n))_n$ is bounded, we infer that
\begin{equation}\label{boundkey}
\frac\gamma2\int_{\R^N} |\Delta u_n|^2 \, dx  +\frac12\int_{\R^N}|\nabla u_n|^2 \, dx \leq \frac{1}{ 2\sigma +2} \int_{\R^N} |u_n|^{2 \sigma + 2} \, dx + \sup_n | E(u_n)|.
\end{equation}
When  $N=1,2$ and $4 \leq \sigma N$ or $4 \leq \sigma N < \frac{2N}{N-2}$ and $N = 3$,
the claim immediately follows  from \eqref{G-N-H1-ineq}. If $N=3,4$ with $\frac{2N}{N-2}\leq \sigma N$ or $N \geq 5$ with $\frac{2N}{N-2} \leq \sigma N < \frac{4N}{N-2}$, we use instead 
(\ref{G-N-H1-ineq2}) to deduce the estimate 
\begin{multline}\label{eq:controledulaplacienparlegradient}
\gamma\int_{\R^N} |\Delta u_n|^2 \, dx +  \int_{\R^N}|\nabla u_n|^2 \, dx  \leq  \\\frac{C_N(\sigma)}{\sigma+1} \left( \int_{\R^N}|\nabla u_n|^2 \, dx \right)^{\frac{N}{2}- \frac{\sigma +1}{2}(N-4)} \left( \int_{\R^N}|\Delta u_n|^2 \, dx \right)^{\frac{N-2}{2}(\sigma +1) - \frac{N}{2}} +  \sup_n | E(u_n)| .
\end{multline}
The claim follows since $\frac{N-2}{2}(\sigma +1) - \frac{N}{2} <1$.
\medskip

If, in addition, $(u_n)_n \subset \mathcal M(c_n)$, then 
$$
\gamma\int_{\R^N} |\Delta u_n|^2 \, dx  +\frac12\int_{\R^N}|\nabla u_n|^2 \, dx = \frac{\sigma N}{2( 2\sigma +2)} \int_{\R^N} |u_n|^{2 \sigma + 2} \, dx.
$$
When  $N=1,2$ and $4 \leq \sigma N$ or $4 \leq \sigma N < \frac{2N}{N-2}$, we deduce from \eqref{G-N-H1-ineq} that 
$$
\gamma\int_{\R^N} |\Delta u_n|^2 \, dx  +\frac12\int_{\R^N}|\nabla u_n|^2 \, dx \le  \frac{\sigma N C_N(\sigma)}{2( 2\sigma +2)} c_n^{\frac{2-\sigma(N-2)}2}\left( \int_{\R^N}|\nabla u_n|^2 \, dx \right)^{\frac{\sigma N}2}
$$
so that the assertion (i) is proved. If  $\frac{2N}{N-2}\leq \sigma N$ and $N=3,4$ or $\frac{2N}{N-2} \leq \sigma N < \frac{4N}{N-2}$ and $N \geq 5$, we use \eqref{G-N-H1-ineq2} to deduce
$$
\gamma\int_{\R^N} |\Delta u_n|^2 \, dx  +\frac12\int_{\R^N}|\nabla u_n|^2 \, dx \le  \frac{\sigma N C_N(\sigma)}{2( 2\sigma +2)} \left( \int_{\R^N}|\nabla u_n|^2 \, dx \right)^{\frac{N-(\sigma+1)(N-4)}2}\left(\int_{\R^N} |\Delta u_n|^2 \, dx\right)^{\frac{(N-2)(\sigma+1)-N}2},
$$
which leads to assertion (ii).
\end{proof}

\begin{remark}\label{Rem:equivlapgrad}
 We emphasize that when $\sigma N =4$, and whatever $N \geq 1$, we deduce from the preceding lemma that for every $c>0$, there exists $C>0$ such that 
\begin{equation}\label{equivlapgradsigmaN=4}
 \frac1{c}\left( \int_{\R^N}|\nabla u|^2 \, dx \right)^2\le \int_{\R^N} |\Delta u|^2 \, dx  \le C\left( \int_{\R^N}|\nabla u|^2 \, dx \right)^{2},
\end{equation}
 for every $u\in \mathcal M(c)$.
\end{remark}

We end this section by proving the non existence result stated in Theorem \ref{thm1}.

\begin{proof}[Proof of Theorem \ref{thm1}]
Observe first that $m(c) \leq 0$ for any $c>0$. Indeed, it follows from \eqref{dilatation}-\eqref{functional} that, for any $u \in S(c)$, $E(u_\lambda) \rightarrow 0$ as $ \lambda \rightarrow 0^+$. 
Now, using \eqref{ajout2}, we have for any $u \in S(c)$,
\begin{align} \label{none}
\begin{split}
E(u)& \geq \frac {\gamma}{2}\int_{\R^N}|\Delta u|^2\, dx 
     - \frac{N}{2N+8}\int_{\R^N}|u|^{2 + \frac 8N} \, dx \\
    & \geq \frac {\gamma}{2} \left(1-\left(\frac{c}{c_N^*}\right)^{\frac 4N}\right) \int_{\R^N}|\Delta u|^2 \, dx.
\end{split}
\end{align}
Therefore, we deduce that 
$m(c) \geq 0$, whence $m (c)=0$, for $c \leq c^*_N:=(\gamma C(N))^{\frac N4}$.

Next we prove that 
there is no solution to \eqref{Pc} when $c \leq c^*_N$. Indeed if $u$ is solution to  \eqref{Pc}, then $Q(u)=0$ and applying \eqref{ajout2}, we get
$$
\gamma \int_{\R^N}|\Delta u|^2\, dx + \frac 12\int_{\R^N} |\nabla u|^2 \, dx = \frac {N}{N+4}\int_{\R^N}|u|^{2 + \frac 8N} \, dx \leq  \left(\frac{c}{c^*_N}\right)^{\frac 4N} \gamma \int_{\R^N}|\Delta u|^2 \, dx,
$$
which implies that $u=0$ because $c\leq c^*_N.$

Finally let us prove that $m(c)=-\infty$ for $c>c^*_N$. It follows from \cite{BoLe}, see also \cite{BeFrVi}, that the best constant $B_N(\frac 4N)$ in \eqref{G-N-H2-ineq} is achieved, i.e. there exists a $U \in H^2(\R^N)$ satisfying
\begin{align} \label{u}
\|U\|_{2 + \frac 8N}^{2 + \frac 8N}=B_N(\frac 4N)\|U\|_2^{\frac 8N}\|\Delta U\|_2^{2}.
\end{align}
Choosing
\begin{align} \label{defw}
w:=c^{\frac 12} \frac{U}{\|U\|_2} \in S(c),
\end{align}
and taking the identity \eqref{u} into account, we get
\begin{align} \label{key10}
E(w_{\lambda}) &=\frac {c}{2\|U\|_2^2}\lambda^2 \gamma\int_{\R^N}|\Delta U|^2\, dx
     +\frac {c}{2\|U\|_2^2}\lambda \int_{\R^N} |\nabla U|^2 \, dx\nonumber \\
     &-\frac{N}{2N+8} \left(\frac {c^{\frac 12}}{\|U\|_2}\right)^{2 + \frac 8N} \lambda^2\int_{\R^N}|U|^{ 2 + \frac 8N} \, dx\\
     &= \frac {c}{2\|U\|_2^2} \gamma \left(1-\left(\frac{c}{c_N^*}\right)^{\frac 4N}\right)\lambda^2
     \int_{\R^N}|\Delta U|^2\, dx
     + \frac {c}{2\|U\|_2^2}\lambda\int_{\R^N} |\nabla U|^2 \, dx.\nonumber
\end{align}
This clearly implies $ E(w_{\lambda}) \rightarrow -\infty$ as $\lambda \rightarrow \infty$ for $c>c^*_N$.  
\end{proof}

\section{Some Properties of the constraint $\mathcal{M}(c)$} \label{sectmanifold}


In this section, we work out some important properties of the manifold $\mathcal{M}(c)$ and of the energy functional $E$ constrained to $\mathcal{M}(c)$. 
Since  $Q(u)=0$ for any $u \in \mathcal{M}(c)$, we can write
\begin{equation}
\label{relEQ1}
E(u)=E(u)-\frac{2}{\sigma N}Q(u)=\gamma\dfrac{\sigma N-4}{2\sigma N}\int_{\R^N}|\Delta u|^2\, dx +\dfrac{\sigma N -2}{2\sigma N}\int_{\R^N}|\nabla u|^2\, dx.
\end{equation}
We shall repeatedly use this relation in the sequel. 

\begin{lem}\label{Mnonvoid}
Assume that $ 4 \leq \sigma N < 4^*$. If $c >c_0$ then  $\mathcal{M}(c) \neq \emptyset$.
\end{lem}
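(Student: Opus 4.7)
The plan is to show that for every $c>c_0$ we can find $u\in S(c)$ and a scaling parameter $\lambda>0$ such that the rescaled function $u_\lambda(x)=\lambda^{N/4}u(\sqrt\lambda x)$ lies in $\mathcal M(c)$. Since $\|u_\lambda\|_2=\|u\|_2=\sqrt c$, this automatically places $u_\lambda$ in $S(c)$, so the entire task reduces to making the scalar function
\begin{equation*}
f_u(\lambda):=Q(u_\lambda)=\gamma\lambda^2\|\Delta u\|_2^2+\frac\lambda2\|\nabla u\|_2^2-\frac{\sigma N}{2(2\sigma+2)}\lambda^{\sigma N/2}\|u\|_{2\sigma+2}^{2\sigma+2}
\end{equation*}
vanish at some $\lambda>0$. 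I would first record the behaviour of $f_u$ near $0$: the linear term $\tfrac{\lambda}{2}\|\nabla u\|_2^2$ dominates for small $\lambda$, so $f_u(\lambda)>0$ for all small $\lambda>0$ whenever $u\neq 0$.

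In the mass-supercritical regime $4<\sigma N<4^*$ (so $c_0=0$), the exponent $\sigma N/2>2$ makes the negative term dominate for large $\lambda$, hence $f_u(\lambda)\to-\infty$ as $\lambda\to\infty$. For any $c>0$, pick any nonzero $u\in S(c)$ (e.g.\ a smooth bump normalized to mass $c$); by continuity and the intermediate value theorem, there exists $\lambda^\ast>0$ with $f_u(\lambda^\ast)=0$, so $u_{\lambda^\ast}\in\mathcal M(c)$.

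The critical case $\sigma N=4$ is the main obstacle because then $\lambda^{\sigma N/2}=\lambda^2$, and $f_u$ takes the form
\begin{equation*}
f_u(\lambda)=\lambda^2\left(\gamma\|\Delta u\|_2^2-\frac{N}{N+4}\|u\|_{2+8/N}^{2+8/N}\right)+\frac\lambda2\|\nabla u\|_2^2.
\end{equation*}
For arbitrary $u\in S(c)$ the coefficient of $\lambda^2$ need not be negative; indeed \eqref{ajout2} shows it is bounded below by $\gamma\bigl(1-(c/c_N^\ast)^{4/N}\bigr)\|\Delta u\|_2^2$, which can be nonnegative. To force strict negativity when $c>c_N^\ast$, I would exploit the fact, used in the proof of Theorem \ref{thm1}, that the Gagliardo–Nirenberg constant $B_N(4/N)$ is attained by some $U\in H^2(\R^N)$ satisfying \eqref{u}. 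Setting $w:=c^{1/2}U/\|U\|_2\in S(c)$, a direct computation (analogous to \eqref{key10}) gives
\begin{equation*}
\gamma\|\Delta w\|_2^2-\frac{N}{N+4}\|w\|_{2+8/N}^{2+8/N}=\frac{c}{\|U\|_2^2}\|\Delta U\|_2^2\cdot\frac{NB_N(4/N)}{N+4}\bigl((c_N^\ast)^{4/N}-c^{4/N}\bigr),
\end{equation*}
which is strictly negative precisely because $c>c_N^\ast$. Consequently $f_w(\lambda)\to-\infty$ as $\lambda\to\infty$ while $f_w(\lambda)>0$ for small $\lambda$, and the IVT delivers $\lambda^\ast>0$ with $w_{\lambda^\ast}\in\mathcal M(c)$. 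The identification of the critical threshold $c_N^\ast$ in terms of $\gamma$ and $B_N(4/N)$, which was established just before this lemma, is exactly what makes the sign computation work; after that the argument is purely elementary.
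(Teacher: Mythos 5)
Your argument is correct and is essentially the paper's own proof: since $Q(u_\lambda)=\lambda\,\partial_\lambda E(u_\lambda)$, your intermediate-value argument on $Q(u_\lambda)$ is the same as the paper's observation that $\lambda\mapsto E(u_\lambda)$ is increasing near $0$ and tends to $-\infty$, hence has a local maximum in $\mathcal M(c)$, and in the critical case $\sigma N=4$ you use exactly the same test function $w$ built from the Gagliardo--Nirenberg extremal as in \eqref{key10}. Your sign computation for the coefficient of $\lambda^2$ checks out against \eqref{u} and \eqref{cn}, so nothing is missing.
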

\begin{proof}
If $4 < \sigma N < 4^*$, the property that $\mathcal{M}(c) \neq \emptyset$ for any $c >0$ follows from the observation that, in \eqref{functional}, $E(u_{\lambda})$ is increasing for $\lambda >0$ small and goes to $- \infty$ as $\lambda \to + \infty$. This implies indeed  that the function $\lambda \mapsto E(u_{\lambda})$ has a least a local maximum, corresponding thus to an element of  $\mathcal{M}(c)$, see \eqref{derivationofQ}. 
If $4 = \sigma N$, we also observe in \eqref{key10} that, for $c > c_N^*$ fixed, the function $\lambda \mapsto E(w_{\lambda})$ is increasing for $\lambda >0$ small and goes to $- \infty$ as $\lambda \to + \infty$. We then conclude as in the previous case.
\end{proof}

We  say that $E$ restricted to $\mathcal{M}(c)$ is coercive if for any $a \in \R$ the subset  $\{ u \in \mathcal{M}(c) : E(u) \leq a \}$ is bounded.

\begin{lem}\label{coercive}
Let $4 \leq \sigma N < 4^*$ and $c >c_0$, then $E$ restricted to $\mathcal{M}(c)$ is coercive and bounded from below by a positive constant. 
\end{lem}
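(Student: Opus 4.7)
The natural starting point is the identity \eqref{relEQ1}, which writes, for every $u \in \mathcal{M}(c)$,
\begin{equation*}
E(u) = \gamma \frac{\sigma N - 4}{2\sigma N}\|\Delta u\|_2^2 + \frac{\sigma N - 2}{2\sigma N}\|\nabla u\|_2^2.
\end{equation*}
I would split the argument according to whether $\sigma N > 4$ or $\sigma N = 4$.

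When $4 < \sigma N < 4^*$, both coefficients above are strictly positive, so an upper bound on $E(u)$ directly controls $\|\Delta u\|_2$ and $\|\nabla u\|_2$, which yields coercivity. For the strictly positive lower bound, I would combine $Q(u)=0$ with the Gagliardo-Nirenberg inequality \eqref{G-N-H2-ineq} to obtain
\begin{equation*}
\gamma \|\Delta u\|_2^2 \leq \frac{\sigma N}{2(2\sigma+2)} B_N(\sigma)\, c^{1+\sigma-\sigma N/4}\, \|\Delta u\|_2^{\sigma N / 2}.
\end{equation*}
Since $\sigma N /2 > 2$ and $\|\Delta u\|_2 > 0$ on $\mathcal{M}(c)$ (any $u \in S(c)$ is nonzero in $H^2(\R^N)$), this forces $\|\Delta u\|_2 \geq K(c,N,\sigma,\gamma) > 0$ uniformly on $\mathcal{M}(c)$, and reinjecting in the identity above yields $E(u) \geq \gamma \frac{\sigma N - 4}{2\sigma N} K^2 > 0$.

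The case $\sigma N = 4$ is where I expect the main obstacle. Here the identity degenerates to $E(u) = \tfrac{1}{4}\|\nabla u\|_2^2$, so coercivity reduces to transferring a bound on $\|\nabla u\|_2$ into a bound on $\|\Delta u\|_2$, which is exactly Remark \ref{Rem:equivlapgrad}. For the positive lower bound, I would use that $c > c_0 = c_N^*$ is strict and combine $Q(u)=0$ with the sharp inequality \eqref{ajout2} to obtain
\begin{equation*}
\frac{1}{2}\|\nabla u\|_2^2 \leq \gamma\bigl[(c/c_N^*)^{4/N} - 1\bigr]\, \|\Delta u\|_2^2,
\end{equation*}
in which the bracket is strictly positive. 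Plugging in the quadratic control $\|\Delta u\|_2^2 \leq C\|\nabla u\|_2^4$ from Remark \ref{Rem:equivlapgrad} and dividing by $\|\nabla u\|_2^2 > 0$ then bootstraps a positive lower bound $\|\nabla u\|_2^2 \geq \kappa > 0$, hence $E(u) \geq \kappa/4$.

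The delicate point is thus the mass-critical case: the identity loses one of its two quadratic terms, the sharp Gagliardo-Nirenberg exactly saturates at $c = c_N^*$, and it is only the strict inequality $c > c_N^*$ together with the quadratic-in-$\|\nabla u\|_2^2$ control of Remark \ref{Rem:equivlapgrad} that closes the bootstrap.
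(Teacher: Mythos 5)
Your proof is correct and follows essentially the same route as the paper: coercivity is read off from the identity \eqref{relEQ1} (with Lemma \ref{boundcritical}, i.e.\ Remark \ref{Rem:equivlapgrad}, handling the degenerate case $\sigma N=4$), and the positive lower bound comes from $Q(u)=0$ combined with a Gagliardo--Nirenberg inequality, exactly as in the text. The only variation is in the mass-critical lower bound, where you feed the sharp second-order inequality \eqref{ajout2} into $Q(u)=0$ and then use the quadratic control $\|\Delta u\|_2^2\le C\|\nabla u\|_2^4$ of Remark \ref{Rem:equivlapgrad}, whereas the paper bounds $\|u\|_{2+8/N}^{2+8/N}$ by $C\|\nabla u\|_2^4$ directly via \eqref{G-N-H1-ineq} or \eqref{G-N-H1-ineq2}; both close the same bootstrap, yours using the strict inequality $c>c_N^*$ explicitly while the paper's does not need it at this step.
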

\begin{proof}
We assume throughout the proof that $u\in \mathcal{M} (c)$. 
In view of the expression of $E(u)$ given by \eqref{relEQ1}, the coercivity trivially holds when $\sigma N >4$, whereas the conclusion follows from Lemma \ref{boundcritical} when $\sigma N =4$. 

We now show the existence of a positive lower bound. 
When $\sigma N>4$,  using \eqref{poho} and the Gagliardo-Nirenberg inequality \eqref{G-N-H2-ineq} we get that
\begin{align}\label{lowerboundlap}
\gamma \int_{\R^N}|\Delta u|^2 \, dx &\leq \gamma \int_{\R^N}|\Delta u|^2 \, dx +\dfrac{1}{2} \int_{\R^N}|\nabla u|^2 \, dx
                                = \frac {\sigma N}{2(2\sigma +2)}  \int_{\R^N}|u|^{2\sigma +2} \, dx\nonumber \\
                                & \leq \frac{ \sigma N  B_N(\sigma)}{2(2\sigma+2)} c^{1+\sigma - \sigma N/4} \left(\int_{\R^N}|\Delta u|^2 \, dx \right)^{\frac{\sigma N}{4}}.
\end{align}
Since $\sigma N>4$, this provides a lower bound on $\int_{\R^N}|\Delta u|^2 \, dx$, whence on $E$. 

When $\sigma N = 4$, we need a lower bound on $\int_{\R^N}|\nabla u|^2 \, dx$. 
Note that, still by \eqref{poho},
\begin{equation}\label{ajout1}
\int_{\R^N}|\nabla u|^2 \, dx \leq \frac{N}{N+4}\int_{\R^N}|u|^{2 + \frac 8N} \, dx 
\end{equation}
If $1 \leq N \leq 3$, we have $2 + \frac 8N < 2^*$, so that,  the Gagliardo-Nirenberg inequality \eqref{G-N-H1-ineq} provides, for some constant $C >0$,
the estimate
$$
\int_{\R^N}|u|^{2 + \frac 8N} \, dx  \leq C \left(\int_{\R^N}|\nabla u|^2 \, dx\right)^2,
$$
which, in view of  \eqref{ajout1}
allows
to conclude. When $N \geq 4$, we have $2^*< 2 + \frac 8N < 4^*$ so that the Gagliardo-Nirenberg inequality
\eqref{G-N-H1-ineq2} yields, for some constant $C >0$,
\begin{equation*}
\int_{\R^N}|u|^{2 +\frac 8N } \, dx\leq C_N(\sigma)  \left(\int_{\R^N}|\nabla u|^2 \, dx\right)^{\frac 8N}\left(\int_{\R^N}|\Delta u|^{2}\,dx\right)^{1-\frac4N}\le C  \left(\int_{\R^N}|\nabla u|^2 \, dx\right)^{2},
 \end{equation*}
where the last inequality follows from Remark \ref{Rem:equivlapgrad}. In view of \eqref{ajout1} the conclusion again follows. 
\end{proof}

\begin{remark}\label{lowerboundsinMc}
 It is clear from Lemma \ref{boundcritical} and Lemma \ref{coercive} that there exists a $\delta >0$ such that
 $$ \inf_{u\in \mathcal M(c)}\min\left(E(u),\int_{\R^N}|u|^{2\sigma +2} \, dx,\int_{\R^N}|\Delta u|^2 \, dx\right) \geq \delta >0$$
if $4 \leq \sigma N < 4^*$ and $c >c_0$ and
$$  \inf_{u\in \mathcal M(c)}\int_{\R^N}|\nabla u|^2\geq \delta >0$$
if $ 4 \leq  \sigma N $ if $N=1,2,3,4$ and $4 \leq \sigma N < \frac{4N}{N-2}$ if $N \geq 5.$
\end{remark}

\begin{lem}\label{unique} Let $4 \leq \sigma N < 4^*$, $c>0$ and $u \in S(c)$.
 If $\sigma N=4$, assume further that $\sup_{\lambda>0} E(u_{\lambda}) <  \infty$. 
There exists a unique $\lambda_{u} >0$ for which $ u_{\lambda_{u}} \in \mathcal{M}(c)$. Moreover, we have $ E(u_{\lambda_{u}})=\max_{\lambda >0}E(u_{\lambda})$ and  the function $\lambda \mapsto E(u_{\lambda})$ is concave on $[\lambda_{u}, \infty)$. Moreover, if $Q(u)\leq 0$, then $\lambda_{u} \in (0,1]$.
\end{lem}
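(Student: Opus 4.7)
The strategy is to reduce the statement to the analysis of the scalar function $f\colon (0,\infty)\to\R$ defined by $f(\lambda):=E(u_\lambda)$. Using the formula \eqref{functional}, we may write
\begin{equation*}
f(\lambda)=A\lambda^2+B\lambda-C\lambda^{\sigma N/2},
\end{equation*}
where $A=\tfrac{\gamma}{2}\|\Delta u\|_2^2>0$, $B=\tfrac{1}{2}\|\nabla u\|_2^2$ and $C=\tfrac{1}{2\sigma+2}\|u\|_{2\sigma+2}^{2\sigma+2}>0$. Since $u\in H^2(\R^N)\cap L^2(\R^N)\setminus\{0\}$ cannot be constant, $B>0$. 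The key identity, immediate from \eqref{derivationofQ} applied to $u_\lambda$ (or a direct computation), is $Q(u_\lambda)=\lambda f'(\lambda)$, so that, for $\lambda>0$, $u_\lambda\in\mathcal M(c)$ if and only if $f'(\lambda)=0$. All statements in the lemma will then follow by studying the sign and monotonicity of $f'$.

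\textbf{Existence and uniqueness of $\lambda_u$.} I would treat the two regimes separately.

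\emph{Case $\sigma N>4$.} Since $\sigma N/2-2>0$, the second derivative
$f''(\lambda)=2A-\tfrac{\sigma N}{2}\bigl(\tfrac{\sigma N}{2}-1\bigr)C\lambda^{\sigma N/2-2}$ is strictly decreasing from $2A>0$ to $-\infty$, hence vanishes at a unique $\lambda_0>0$, $f$ being strictly convex on $(0,\lambda_0)$ and strictly concave on $(\lambda_0,\infty)$. Since $f'(0^+)=B>0$ and $f'$ is strictly increasing on $(0,\lambda_0)$, we have $f'>0$ on $(0,\lambda_0]$; on $(\lambda_0,\infty)$, $f'$ is strictly decreasing and tends to $-\infty$ (as $\sigma N/2-1>1$). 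Thus $f'$ has a unique zero $\lambda_u>\lambda_0$, and $f$ is strictly increasing on $(0,\lambda_u)$ and strictly decreasing on $(\lambda_u,\infty)$, so $\lambda_u$ is the global maximum. Concavity on $[\lambda_u,\infty)$ is automatic since $\lambda_u>\lambda_0$.

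\emph{Case $\sigma N=4$.} Here $f(\lambda)=(A-C)\lambda^2+B\lambda$. The hypothesis $\sup_{\lambda>0}f(\lambda)<\infty$ combined with $B>0$ forces $A-C<0$, so $f$ is strictly concave on $(0,\infty)$ with unique critical point $\lambda_u=B/\bigl(2(C-A)\bigr)>0$, which is the global maximum; concavity on $[\lambda_u,\infty)$ is trivial.

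\textbf{The case $Q(u)\le 0$.} From $Q(u_\lambda)=\lambda f'(\lambda)$ evaluated at $\lambda=1$ we have $f'(1)=Q(u)\le 0$. In both cases above we have shown $f'(\lambda)>0$ for every $\lambda\in(0,\lambda_u)$, so $f'(1)\le 0$ forces $\lambda_u\le 1$, giving $\lambda_u\in(0,1]$.

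\textbf{Main obstacle.} The only delicate point is the mass-critical regime $\sigma N=4$: without the extra hypothesis $\sup_{\lambda>0}E(u_\lambda)<\infty$, the coefficient $A-C$ in the reduced polynomial could be non-negative and $f$ would have no critical point at all, so this assumption is precisely what is needed to upgrade the local scaling analysis into a true maximization problem. Everything else is a one-variable calculus exercise.
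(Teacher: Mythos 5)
Your proof is correct and follows essentially the same route as the paper: reduce to the one-variable function $f(\lambda)=E(u_\lambda)$, use $Q(u_\lambda)=\lambda f'(\lambda)$ to identify critical points with elements of $\mathcal M(c)$, extract the sign condition $A<C$ from the boundedness hypothesis when $\sigma N=4$, and read off the location of $\lambda_u$ relative to $1$ from the sign of $f'(1)=Q(u)$. The only cosmetic difference is that you obtain the concavity on $[\lambda_u,\infty)$ by locating the unique inflection point $\lambda_0<\lambda_u$ of $f$, whereas the paper substitutes $\lambda=t\lambda_u$ and uses $Q(u_{\lambda_u})=0$ directly; both are the same elementary computation.
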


\begin{proof}
For any $u \in S(c)$, differentiating the identity \eqref{functional} with respect to $\lambda>0$, we obtain
\begin{align*}
\dfrac{d}{d\lambda}E(u_{\lambda})&=\gamma\lambda\int_{\R^N}|\Delta u|^2\, dx+\frac{1}{2}\int_{\R^N}|\nabla u|^2\, dx-\frac{\sigma N\lambda^{\sigma N/2 -1}}{2(2\sigma+2)}\int_{\R^N}|u|^{2\sigma+2}\, dx\\
&= \dfrac{1}{\lambda}Q(u_\lambda).
\end{align*}
When $\sigma N >4$ it is easily seen that there exists a unique $\lambda_{u} >0$ such that $Q(u_{\lambda_u})=0$
and also that 
\begin{equation}\label{maxr}
\dfrac{d}{d\lambda}E (u_\lambda ) >0 \, \mbox{ if } \, \lambda \in (0,\lambda_{u}) \quad \mbox{and} \quad \dfrac{d}{d\lambda}E (u_\lambda ) < 0  \, \mbox{ if } \, \lambda \in (\lambda_{u}, \infty)
\end{equation}
from which we deduce that $E(u_\lambda)< E(u_{\lambda_{u}})$, for any $\lambda>0$, $\lambda \neq \lambda_{u}$. When $\sigma N =4$, since we assume that $\sup_{\lambda>0} E(u_{\lambda}) <  \infty$, then necessarily
\begin{align} \label{s}
\gamma \int_{\R^N}|\Delta u|^2\, dx <\frac {1}{\sigma +1} \int_{\R^N} |u|^{2 \sigma +2 } \, dx,
\end{align}
and thus there also exists a unique $\lambda_{u} >0$ such that $Q(u_{\lambda_{u}})=0$ and (\ref{maxr}) holds.

Now writing 
$\lambda = t \lambda_{u}$, we have
\begin{align*}
\dfrac{d^2}{d^2\lambda}E(u_{\lambda})&=\gamma \int_{\R^N}|\Delta u|^2\, dx - \frac{\sigma N (\sigma N -2)}{4 (2 \sigma +2)} t^{\frac{\sigma N}{2}-2} \lambda_{u}^{\frac{\sigma N}{2}-2} \int_{\R^N}|u|^{2\sigma+2}\, dx\\
&= \dfrac{1}{\lambda_{u}^2}\Big[ \gamma \lambda_{u}^2 \int_{\R^N}|\Delta u|^2\, dx - \frac{\sigma N (\sigma N -2)}{4 (2 \sigma +2)} t^{\frac{\sigma N}{2}-2} \lambda_{u}^{\frac{\sigma N}{2}} \int_{\R^N}|u|^{2\sigma+2}\, dx\Big].
\end{align*}
Since
$$0 = Q(u_{\lambda_{u}}) = \gamma \lambda_{u}^2 \int_{\R^N}|\Delta u|^2\, dx + \frac{1}{2} \lambda_{u} \int_{\R^N}|\nabla u|^2\, dx - \frac{\sigma N }{2 (2 \sigma +2)} \lambda_{u}^{\frac{\sigma N}{2}} \int_{\R^N}|u|^{2\sigma+2}\, dx,$$
we infer that $ \displaystyle \frac{d^2}{d^2 \lambda} E(u_{\lambda}) <0$ for any $t \geq 1$. This proves the lemma.  
\end{proof}

\begin{remark}\label{regularitelambda}
It is easily seen that the map $u\to \lambda_u$ is of class $C^1$. This follows by the Implicit Function Theorem if $\sigma N >4$, while  the case $\sigma N =4$ is even simpler since we then have the explicit expression of $\lambda_u$.
\end{remark}

Our next lemma is not needed to derive our main results but it provides a better understanding of the set $\mathcal{M}(c)$ and could prove useful in other contexts.

\begin{lem}\label{manifold}
Assume $ 4 \leq \sigma N< 4^*$ and $c>0$. Then  $\mathcal{M}(c)$ is a $C^1$ manifold of codimension $2$ in $H^2 (\R^N)$, whence a $C^1$ manifold of codimension $1$ in $S(c)$.
\end{lem}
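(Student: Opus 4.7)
The plan is to realize $\mathcal{M}(c)$ as the preimage of a regular value for a $C^1$ map. Define
\[
F: H^2(\R^N) \longrightarrow \R^2, \qquad F(u) := \bigl(G(u), Q(u)\bigr),
\]
where $G(u):=\|u\|_2^2-c$ and $Q$ is the functional defined above, so that $\mathcal{M}(c) = F^{-1}(0,0)$. Since $G$ and $Q$ are $C^1$ on $H^2(\R^N)$, so is $F$. By the implicit function (regular value) theorem it suffices to show that the differential $DF(u) : H^2(\R^N) \to \R^2$ is surjective at every $u \in \mathcal{M}(c)$, equivalently that $G'(u)$ and $Q'(u)$ are linearly independent in $H^{-2}(\R^N)$. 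Equivalently, one needs to exhibit, for each $u\in\mathcal{M}(c)$, a direction $v\in H^2(\R^N)$ with $G'(u)[v]=0$ but $Q'(u)[v]\neq 0$.

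The natural candidate is the infinitesimal scaling direction
\[
v := \frac{d}{d\lambda}\bigg|_{\lambda=1} u_\lambda = \frac{N}{4}\,u + \frac{1}{2}\,x\cdot\nabla u,
\]
which lies in $H^2(\R^N)$ and satisfies $G'(u)[v]=\frac{d}{d\lambda}|_{\lambda=1}\|u_\lambda\|_2^2 = 0$ since dilations in \eqref{dilatation} preserve the $L^2$-norm. The key computation is
\[
Q'(u)[v] = \frac{d}{d\lambda}\bigg|_{\lambda=1} Q(u_\lambda) = 2\gamma\!\int_{\R^N}\!|\Delta u|^2\,dx + \frac12\!\int_{\R^N}\!|\nabla u|^2\,dx - \frac{(\sigma N)^2}{4(2\sigma+2)}\!\int_{\R^N}\!|u|^{2\sigma+2}\,dx,
\]
obtained by differentiating the explicit expression $Q(u_\lambda) = \gamma\lambda^2\|\Delta u\|_2^2 + \tfrac12\lambda\|\nabla u\|_2^2 - \tfrac{\sigma N}{2(2\sigma+2)}\lambda^{\sigma N/2}\|u\|_{2\sigma+2}^{2\sigma+2}$. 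Using $Q(u)=0$ to eliminate the nonlinear term yields the clean expression
\[
Q'(u)[v] = -\,\gamma\,\frac{\sigma N - 4}{2}\int_{\R^N}|\Delta u|^2\,dx \;-\; \frac{\sigma N - 2}{4}\int_{\R^N}|\nabla u|^2\,dx.
\]

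When $\sigma N > 4$ both coefficients are strictly negative and both integrals are positive (again $u\neq 0$), so $Q'(u)[v]<0$. When $\sigma N = 4$ the first contribution vanishes, but Remark~\ref{lowerboundsinMc} guarantees that $\int_{\R^N}|\nabla u|^2\,dx \geq \delta > 0$ on $\mathcal{M}(c)$, so again $Q'(u)[v]<0$. This is the main step and the place where the hypothesis $4\le\sigma N < 4^\ast$ together with the uniform lower bound from Lemma~\ref{coercive} is used. In both cases $Q'(u)$ does not vanish on $\ker G'(u)$, so $\{G'(u),Q'(u)\}$ are linearly independent and $DF(u)$ is surjective. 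The regular value theorem then implies that $\mathcal{M}(c)$ is a $C^1$ submanifold of $H^2(\R^N)$ of codimension $2$. Since $G'(u)\neq 0$ on $S(c)$, the map $u\mapsto Q(u)$ restricted to the $C^1$ manifold $S(c)$ has nonvanishing differential at points of $\mathcal{M}(c)$ (by the same argument applied to $Q'(u)|_{T_uS(c)}$), whence $\mathcal{M}(c)$ is a $C^1$ submanifold of $S(c)$ of codimension $1$.
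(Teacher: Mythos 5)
Your overall strategy is the same as the paper's: realize $\mathcal{M}(c)$ as the zero set of the $C^1$ map $(G,Q)$ and check that its differential is surjective at every point. Your final formula is also consistent with the paper's computation (up to an overall factor of $\tfrac12$), and the way you dispose of the two cases $\sigma N>4$ and $\sigma N=4$ is fine.

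There is, however, a genuine gap in the surjectivity step. You test $Q'(u)$ against the infinitesimal scaling direction $v=\tfrac{N}{4}u+\tfrac12\,x\cdot\nabla u$ and assert that $v\in H^2(\R^N)$. This is false for a general $u\in H^2(\R^N)$: the weighted quantity $x\cdot\nabla u$ need not even lie in $L^2(\R^N)$ (the dilation group $\lambda\mapsto u_\lambda$ is strongly continuous on $H^2$ but not differentiable there; its generator has a proper domain). Consequently $v$ is not an admissible element of $T_uS(c)$, and the chain-rule identity $Q'(u)[v]=\frac{d}{d\lambda}Q(u_\lambda)\big|_{\lambda=1}$ is not justified, even though the right-hand side is a perfectly smooth function of $\lambda$. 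The paper avoids this by arguing by contradiction: if $dG(u)$ and $dQ(u)$ were linearly dependent, then $u$ would weakly solve $2\gamma\Delta^2u-\Delta u=2\nu u+\tfrac{\sigma N}{2}|u|^{2\sigma}u$, and the Derrick--Pohozaev identity of Lemma \ref{Pohozaevs} --- which is the rigorous substitute for ``testing with the scaling direction'' and is valid for \emph{solutions} thanks to elliptic regularity and decay --- yields
$4\gamma\|\Delta u\|_2^2+\|\nabla u\|_2^2=\sigma N\gamma\|\Delta u\|_2^2+\tfrac{\sigma N}{2}\|\nabla u\|_2^2$, which is impossible for $\sigma N\ge4$ and $u\in S(c)$. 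To repair your argument you should either adopt this contradiction route, or restrict the test direction to a dense class and argue by approximation; as written, the key step does not go through.
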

\begin{proof}
By definition, $u \in \mathcal{M}(c)$ if and only if $G(u):=\|u\|_2^2 -c =0$ and $Q(u)=0.$ It is easy to check that $G, Q$ are of $C^1$ class. Hence we only have to check that for any $u \in \mathcal{M}(c)$,
$$
(dG(u), dQ(u)): H^2 (\R^N ) \rightarrow \R^2 \,\, \text{is surjective}.
$$
Otherwise, $dG(u)$ and $dQ(u)$ are linearly dependent, i.e. there exists $\nu \in \R$ such that for any $\varphi \in H^2 (\R^N)$,
$$
2\gamma \int_{\R^N} \Delta u \Delta \varphi \, dx +  \int_{\R^N} \nabla u \cdot \nabla \varphi \, dx
-\frac{\sigma N}{2} \int_{\R^N}|u|^{2\sigma}u \varphi \, dx= 2 \nu \int_{\R^N}u \varphi\,dx,
$$
namely $u$ weakly solves
$$
2\gamma \Delta^2 u - \Delta u =2\nu u +\dfrac{\sigma N}{2} |u|^{2\sigma}u.
$$
From Lemma \ref{Pohozaevs} in the Appendix, we deduce that
$$
4\gamma \int_{\R^N}|\Delta u|^2 \, dx +   \int_{\R^N}|\nabla u|^2 \, dx
= \frac {\left(\sigma N\right)^2}{2(2\sigma +2)} \int_{\R^N} |u|^{2\sigma +2} \, dx,
$$
and then, since  $Q(u) =0$, we infer that
$$
4 \gamma \int_{\R^N}|\Delta u|^2 \, dx +  \int_{\R^N}|\nabla u|^2 \, dx
= \sigma N  \gamma\int_{\R^N}|\Delta u|^2 \, dx
+  \frac{\sigma N}{2} \int_{\R^N}|\nabla u|^2 \, dx
$$
which is impossible since  $\sigma N \geq 4$ and $u \in S(c)$.
\end{proof}

Our aim now is to prove the existence of a Palais-Smale sequence $(u_n)_n\subset \mathcal M(c)$  at level $\Gamma(c)$ for $E$ restricted $S(c)$. Our arguments are directly inspired from \cite{BaSo2}. We start be recalling the following definition  \cite[Definition 3.1]{Ghoussoub}.

\begin{definition}
Let $B$ be a closed subset of a metric space $Y$. We say that a class $\mathcal{G}$ of compact subsets of $Y$ is a homotopy stable family with closed boundary $B$ provided
\begin{enumerate}
\item every set in $\mathcal{G}$ contains $B$;
\item for any $A\in \mathcal{G}$ and any $\eta \in C([0,1]\times Y, Y)$ satisfying $\eta (t,x)=x$ for all $(t,x)\in (\{0\}\times Y)\cup ([0,1]\times B)$, we have $\eta (\{1\} \times A)\in \mathcal{G}$.
\end{enumerate}
\end{definition}
We explicitly observe that $B=\varnothing$ is admissible. 
Now for $4 < \sigma N < 4^*$, we define $F: S(c) \mapsto \R$ by $F(u)=E(u_{\lambda_u})$, where $\lambda_u$ is uniquely defined by $u_{\lambda_u}\in \mathcal{M}(c)$, see Lemma \ref{unique}. When $\sigma N=4$, we define similarly $F$ on the open set $\mathcal{E}(c) \subset S(c)$ on which $\sup_{\lambda >0}E(u_{\lambda}) < \infty.$  As in \cite[Lemma 3.7]{BaSo2} it can be readily proved that if $(u_n)_n \subset \mathcal{E}(c)$ is such that $u_n \to u \in \partial \mathcal{E}(c)$ as $n \to  \infty$ then $F(u_n) \to + \infty$ as $n \to \infty$. Finally note that since the map $u \mapsto \lambda_u$ is of class $C^1$, see Remark \ref{regularitelambda}, the functional $F$ is of class $C^1$.

\begin{lem}
\label{ps}
Let $4 \leq \sigma N < 4^*$.
Let $\mathcal{G}$ be a homotopy stable family of compact subsets of $S(c)$ with closed boundary $B$ and let
$$e_{\mathcal{G}}:= \inf_{A\in \mathcal{G}}\max_{u\in A}F(u).$$ 
Suppose that $B$ is contained in a connected component of $\mathcal{M}(c)$ and that $\max\{\sup F(B),0\}<e_{\mathcal{G}}<\infty$. Then there exists a Palais-Smale sequence $(u_n)_n \subset \mathcal{M}(c)$ for $E$ restricted to $S(c)$ at level $e_{\mathcal{G}}$.
\end{lem}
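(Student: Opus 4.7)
The strategy, inspired by Bartsch--Soave \cite{BaSo2}, is to apply Ghoussoub's minimax principle to the $C^1$ functional $F$ on $S(c)$ (resp.\ on the open set $\mathcal{E}(c)$ when $\sigma N=4$, noting that a Palais-Smale sequence at the finite level $e_{\mathcal{G}}$ cannot approach $\partial\mathcal{E}(c)$ since $F$ blows up there), and then to transfer the resulting Palais-Smale sequence for $F$ into one for $E|_{S(c)}$ lying on $\mathcal{M}(c)$, via the scaling $v\mapsto v_{\lambda_v}$.

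First I would reduce the minimax family to sets already contained in $\mathcal{M}(c)$. For $A\in\mathcal{G}$, the map $\eta:[0,1]\times S(c)\to S(c)$, $\eta(t,u):=u_{1+t(\lambda_u-1)}$, is continuous by Remark \ref{regularitelambda} and equals the identity on $B$, since $B\subset \mathcal{M}(c)$ forces $\lambda_u=1$ on $B$. Hence $A^{*}:=\{u_{\lambda_u}:u\in A\}=\eta(\{1\}\times A)\in\mathcal{G}$, and since $F(u_{\lambda_u})=F(u)$ the maximum of $F$ on $A^{*}$ coincides with that on $A$. Thus one can pick a minimizing family $A_n\in\mathcal{G}$ with $A_n\subset\mathcal{M}(c)$ and $\max_{A_n}F\to e_{\mathcal{G}}$, and Ghoussoub's quantitative statement then furnishes $(v_n)\subset S(c)$ with $F(v_n)\to e_{\mathcal{G}}$, $\|dF(v_n)|_{T_{v_n}S(c)}\|\to 0$ and $\mathrm{dist}_{H^2}(v_n,A_n)\to 0$.

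I then set $u_n:=(v_n)_{\lambda_{v_n}}\in\mathcal{M}(c)$, so that $E(u_n)=F(v_n)\to e_{\mathcal{G}}$. Differentiating $F(v(t))=E(v(t)_{\lambda_{v(t)}})$ along a curve $v(t)\subset S(c)$ and using $\partial_\lambda E(v_\lambda)|_{\lambda=\lambda_v}=\lambda_v^{-1}Q(v_{\lambda_v})=0$ yields the key identity
\[
dF(v)[\varphi]=dE(v_{\lambda_v})[\varphi_{\lambda_v}],\qquad \varphi\in T_v S(c).
\]
Given $\psi\in T_{u_n}S(c)$, its preimage $\varphi(x):=\lambda_{v_n}^{-N/4}\psi(x/\sqrt{\lambda_{v_n}})$ lies in $T_{v_n}S(c)$, satisfies $\varphi_{\lambda_{v_n}}=\psi$, and
\[
\|\varphi\|_{H^2}^2=\|\psi\|_2^2+\lambda_{v_n}^{-1}\|\nabla\psi\|_2^2+\lambda_{v_n}^{-2}\|\Delta\psi\|_2^2.
\]
Hence $|dE(u_n)[\psi]|=|dF(v_n)[\varphi]|\le \|dF(v_n)|_{T_{v_n}S(c)}\|\cdot\|\varphi\|_{H^2}$, which tends to $0$ uniformly in $\psi$ on the $H^2$-unit ball, provided $\lambda_{v_n}$ remains in a compact subset of $(0,\infty)$.

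The main obstacle is therefore this two-sided control of $\lambda_{v_n}$. Since $\|\Delta u_n\|_2=\lambda_{v_n}\|\Delta v_n\|_2$, it suffices to bound $\|\Delta u_n\|_2$ and $\|\Delta v_n\|_2$ from above and away from zero. As $(u_n)\subset\mathcal{M}(c)$ with $(E(u_n))$ bounded, Lemma \ref{coercive} bounds its $H^2$-norm and Remark \ref{lowerboundsinMc} gives $\|\Delta u_n\|_2\ge \delta>0$. Picking $w_n\in A_n$ with $\|v_n-w_n\|_{H^2}\to 0$, the same two results applied to $w_n\in\mathcal{M}(c)$ (noting that $F$ coincides with $E$ on $A_n$, and $\max_{A_n}F$ is bounded) yield $0<\delta/2\le\|\Delta w_n\|_2\le K$ for $n$ large, hence the same bounds on $\|\Delta v_n\|_2$, which closes the argument.
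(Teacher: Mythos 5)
Your proposal is correct and follows essentially the same route as the paper's proof: the same deformation $\eta(t,u)=u_{1-t+t\lambda_u}$ to push the minimizing sets into $\mathcal{M}(c)$, Ghoussoub's minimax principle with the distance estimate to $A_n$, the identity $dF(v)[\varphi]=dE(v_{\lambda_v})[\varphi_{\lambda_v}]$, and the two-sided control of $\lambda_{v_n}$ via the bounds on $\|\Delta u_n\|_2$ and $\|\Delta v_n\|_2$ coming from Lemma \ref{coercive} and Remark \ref{lowerboundsinMc}. No substantive differences.
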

 
\begin{proof}
Take $(D_n)_n \subset \mathcal{G}$ such that $\max_{u\in D_n}F(u)<e_{\mathcal{G}}+\frac1n$ and
$$\eta :[0,1]\times S(c)\rightarrow S(c),\ \eta (t,u)=u_{{1-t + t \lambda_u}}.$$
Since $\lambda_u =1$ for any $u\in \mathcal{M}(c)$, and $B\subset \mathcal{M}(c)$, we have $\eta (t,u)=u$ for $(t,u)\in (\{0 \}\times S(c))\cup ([0,1]\times B)$. Observe also that $\eta$ is continuous. Then, using the definition of $\mathcal{G}$, we have
$$A_n:= \eta (\{1 \}\times D_n )=\{u_{\lambda_u}:\ u\in D_n \}\in \mathcal{G}.$$
Also notice that $A_n \subset \mathcal{M}(c)$ for all $n$. Let $v\in A_n$, i.e. $v=u_{\lambda_u}$ for some $u\in D_n$ and $F(u)=F(v)$. So $\max_{A_n}F=\max_{D_n}F$ and therefore $(A_n)_n \subset \mathcal{M}(c)$ is another minimizing sequence of $e_{\mathcal{G}}$. Using the equivariant minimax principle \cite[Theorem 3.2]{Ghoussoub}, we obtain a Palais-Smale sequence $(\tilde{u}_n)_n$ for $F$ on $S(c)$ at level $e_{\mathcal{G}}$ such that $\text{dist}_{H^2 (\R^N )} (\tilde{u}_n , A_n)\rightarrow 0$ as $n\rightarrow \infty$. Now  writing $\lambda_n=\lambda_{\tilde{u}_n}$ to shorten the notations, we set $u_n=(\tilde{u}_n)_{\lambda_n}\in \mathcal{M}(c)$.  We claim that there exists $C>0$ such that,
 \begin{equation}
\label{BaSoe1}
\frac1C \leq \lambda_n^2\leq C
\end{equation}
 for $n \in \N$ large enough. 
Indeed, notice first that
 \begin{equation}
\label{BaSoe2}
\lambda_n^2=\dfrac{\int_{\R^N} |\Delta u_n|^2  \, dx}{\int_{\R^N} |\Delta \tilde{u}_n |^2  \, dx}.
\end{equation}
Since by definition we have $E(u_n)=F(\tilde{u}_n )\rightarrow e_{\mathcal{G}}$, we deduce from Lemma \ref{coercive} and Remark \ref{lowerboundsinMc}, that there exists $M>0$ such that 
 \begin{equation}
\label{BaSoe3}
\frac1M\leq \|\Delta u_n\|_{2}\leq M.
\end{equation}
On the other hand, since $A_n \subset \mathcal{M}(c)$, is a minimizing sequence for $e_{\mathcal{G}}$ and $E$ is coercive on $\mathcal{M}(c)$, we deduce that $(A_n)_n$ is uniformly bounded in $H^2 (\R^N)$ and thus from $\text{dist}_{H^2 (\R^N )} (\tilde{u}_n , A_n)\rightarrow 0$ as $n\rightarrow \infty$, this implies that $\sup_n\|\tilde{u}_n \| <\infty$. Also, since $A_n$ is compact for every $n \in \N$, there exists a $v_n \in A_n$ such that  $\text{dist}_{H^2 (\R^N )} (\tilde{u}_n , A_n)=\|v_n-\tilde{u}_n \|$  and, using once again Remark \ref{lowerboundsinMc}, we also infer that
$$\|\Delta \tilde{u}_n \|_{2}\geq \|\Delta v_n \|_{2}- \|\Delta (\tilde{u}_n - v_n) \|_{2}\geq 
\dfrac{\delta}{2},$$
for some $\delta >0$.
This proves the claim.

Next, we show that $(u_n)_n \subset \mathcal{M}(c)$ is a Palais-Smale sequence for $E$ on $S(c)$ at level $e_{\mathcal{G}}$.
Denoting by $\|.\|_\ast$ the dual norm of $(T_{u_n}S(c))^\ast$, we have
\begin{equation}\label{start}
\|dE(u_n)\|_\ast=\sup_{\psi \in T_{u_n}S(c),\ \|\psi\| \leq 1}|dE(u_n)[\psi]|=\sup_{\psi \in T_{u_n}S(c),\ \|\psi\| \leq 1}|dE(u_n)[ (\psi_{\frac1{\lambda_n}})_{\lambda_n} ]|.
\end{equation}
It can be checked that the map $T_u S(c)\rightarrow T_{u_{\lambda_u}} S(c)$ defined by $\varphi \rightarrow \varphi_{\lambda_u}$ is an isomorphism, see \cite[Lemma 3.6]{BaSo2} for a proof of a closely related result. Also, see here \cite[Lemma 3.8]{BaSo2} or \cite[Lemma 3.2]{BaSo1}, we have that $dF(u)[\varphi]=dE(u_{\lambda_u}) [\varphi_{\lambda_u}]$ for any $u\in S(c)$ and $\varphi \in T_u S(c)$. It follows that
\begin{equation}\label{lienclef}
\|dE(u_n)\|_\ast = \sup_{\psi \in T_{u_n}S(c),\ \|\psi\| \leq 1} |dF(\tilde{u}_n)[\psi_{\frac1{\lambda_n}}]|.
\end{equation}
At this point it is easily seen from \eqref{BaSoe1} that (increasing $C$ if necessary)
$ \|\psi_{\frac1{\lambda_n}}\| \leq C \|\psi\| \leq C$
and we deduce from \eqref{lienclef} that $(u_n)_n \subset \mathcal{M}(c)$ is a Palais-Smale sequence for $E$ on $S(c)$ at level $e_{\mathcal{G}}$.
\end{proof}

\begin{lem}\label{psbis}
Let $ 4\leq \sigma N< 4^*$ and  $c >c_0 $. There exists a Palais-Smale sequence  $(u_n)_n \subset \mathcal{M}(c)$ for $E$ restricted to $S(c)$ at the level $\Gamma(c)$. 
\end{lem}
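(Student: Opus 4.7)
The plan is to invoke Lemma \ref{ps} with the trivial homotopy stable family of singletons. Concretely, I would set $B=\emptyset$ and take $\mathcal{G}$ to be the family of all singletons $\{\{u\}\}$, where $u$ ranges over $S(c)$ when $4<\sigma N<4^*$ and over the open subset $\mathcal{E}(c)\subset S(c)$ when $\sigma N=4$. Both defining properties of a homotopy stable family are then immediate: every $A\in\mathcal{G}$ trivially contains the empty boundary, and any continuous deformation $\eta$ sends a singleton to a singleton, so $\eta(\{1\}\times A)\in\mathcal{G}$. The condition that $B$ lies in a single connected component of $\mathcal{M}(c)$ is vacuous.

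Next I would identify the minimax level. Since $\mathcal{G}$ consists of singletons,
$$e_{\mathcal{G}}=\inf_{u}\max_{v\in\{u\}}F(v)=\inf_{u}E(u_{\lambda_u}).$$
For every admissible $u$, the rescaling $u_{\lambda_u}$ lies in $\mathcal{M}(c)$ by the definition of $\lambda_u$ given in Lemma \ref{unique}, and conversely every $v\in\mathcal{M}(c)$ satisfies $\lambda_v=1$ and hence $v_{\lambda_v}=v$. The map $u\mapsto u_{\lambda_u}$ is therefore surjective onto $\mathcal{M}(c)$, which yields
$$e_{\mathcal{G}}=\inf_{v\in\mathcal{M}(c)}E(v)=\Gamma(c).$$

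It remains to verify the inequalities $\max\{\sup F(B),0\}<e_{\mathcal{G}}<\infty$ required to apply Lemma \ref{ps}. Since $B=\emptyset$, they reduce to $0<\Gamma(c)<\infty$. The finiteness follows from Lemma \ref{Mnonvoid}, which ensures that $\mathcal{M}(c)\neq\emptyset$ for $c>c_0$, so $\Gamma(c)$ is bounded above by the finite value $E(u)$ for any such $u$. The strict positivity of $\Gamma(c)$ is exactly the lower bound recorded in Lemma \ref{coercive} and Remark \ref{lowerboundsinMc}. With these two inequalities in hand, Lemma \ref{ps} produces a Palais--Smale sequence $(u_n)_n\subset\mathcal{M}(c)$ for $E|_{S(c)}$ at level $\Gamma(c)$.

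There is essentially no obstacle beyond bookkeeping; the real work has already been packaged into Lemma \ref{ps}. The only mild care needed is to ensure, in the critical case $\sigma N=4$, that $F$ is defined on the whole domain where the minimax is taken. This is handled by restricting to $\mathcal{E}(c)$, which still contains $\mathcal{M}(c)$ and so does not affect the identity $e_{\mathcal{G}}=\Gamma(c)$.
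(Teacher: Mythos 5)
Your proposal is correct and follows essentially the same route as the paper: the authors also apply Lemma \ref{ps} to the family of singletons in $S(c)$ (or in $\mathcal{E}(c)$ when $\sigma N=4$) with $B=\varnothing$, and identify $e_{\mathcal{G}}=\inf_{u\in S(c)}\sup_{\lambda>0}E(u_\lambda)=\Gamma(c)$ exactly as you do. The verification that $0<\Gamma(c)<\infty$ via Lemma \ref{Mnonvoid}, Lemma \ref{coercive} and Remark \ref{lowerboundsinMc} is the right justification for the hypotheses of Lemma \ref{ps}.
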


\begin{proof}
We use Lemma \ref{ps} taking the set  $\bar{\mathcal{G}}$ of all singletons belonging to $S(c)$ (or to $\mathcal{E}(c) \subset S(c)$ if $\sigma N =4$) and $B=\varnothing$. It is clearly a homotopy stable family of compact subsets of $S(c)$ (without boundary).  Observe that 
$$e_{\bar{\mathcal{G}}}:=\inf_{A\in \bar{\mathcal{G}}}\max_{u\in A}F(u)=\inf_{u\in S(c)}\sup_{\lambda >0} E(u_\lambda).$$
We claim that
\begin{align} \label{minmax}
e_{\bar{\mathcal{G}}}=\Gamma(c).
\end{align}
Indeed, on one hand, we observe that for any $u \in S(c)$, either  $\sup_{\lambda>0} E(u_{\lambda}) = +  \infty$ or there exists $\lambda_{u} >0$ such that $ u_{\lambda_{u}} \in \mathcal{M}(c)$
and $ E(u_{\lambda_{u}}) \leq \sup_{\lambda >0}E(u_{\lambda})$. This implies that
$$
\inf_{u\in S(c)}\sup_{\lambda>0} E(u_\lambda) \geq \inf_{u \in \mathcal{M}(c)}E(u).
$$
On the other hand, for any $u \in \mathcal{M}(c)$, $ E(u)  \geq \sup_{\lambda >0}E(u_{\lambda})$ and so 
$$ \inf_{u \in \mathcal{M}(c)}E(u) \geq \inf_{u\in S(c)}\sup_{\lambda>0} E(u_\lambda).$$
Thus (\ref{minmax}) holds and the lemma follows directly from Lemma \ref{ps}.
\end{proof}

\begin{lem}
\label{conv}
Let $4 \leq \sigma N < 4^*$, $c>c_0$ and $(u_n)_n \subset \mathcal{M}(c)$ be a Palais-Smale sequence for $E$ restricted to $S(c)$. Then there exist $u_c \in H^2(\R^N)$, a sequence $(\alpha_n)_n \subset \R$ and $\alpha_c \in \R$  such that, up to translation and up to the extraction of a subsequence,
\begin{enumerate}[(i)]
\item $u_n \rightharpoonup u_c \neq 0$ in $H^2(\R^N)$ as $n \to \infty$;
\item $\alpha_n \rightarrow \alpha_c$ in $\R$ as $n \to \infty$;
\item $\gamma \Delta^2 u_n- \Delta u_n + \alpha_n u_n- |u_n|^{2 \sigma}u_n \rightarrow 0$ in $H^{-2}(\R^N)$ as $n \to \infty$;
\item $\gamma \Delta^2 u_c -\Delta u_c +\alpha_c u_c=|u_c|^{2\sigma}u_c$.
\end{enumerate}
Here $H^{-2}(\R^N)$ denotes the dual of $H^2(\R^N)$.
In addition, if $\|u_n - u_c\|_{2 \sigma +2} \to 0$  and $\alpha_c >0$, then $\|u_n - u_c\| \to 0$ as $n \to \infty$. 
\end{lem}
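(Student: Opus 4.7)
The plan is to exploit coercivity of $E$ on $\mathcal{M}(c)$ to extract a weakly convergent subsequence, produce Lagrange multipliers, recover the limit equation via Rellich, and finally use the sign hypothesis $\alpha_c > 0$ to upgrade to strong convergence. First, by Lemma \ref{coercive}, $E$ is coercive on $\mathcal{M}(c)$ and $(E(u_n))_n$ is bounded, so $(u_n)_n$ is bounded in $H^2(\R^N)$. The Palais-Smale property on the constraint, combined with the orthogonal decomposition $\varphi = \psi + \frac{1}{c}\bigl(\int u_n\varphi\bigr) u_n$ (where $\psi \in T_{u_n}S(c)$), furnishes Lagrange multipliers $\alpha_n\in\R$ for which (iii) holds, and pairing (iii) with $u_n$ yields
$$
c\,\alpha_n = -\gamma \|\Delta u_n\|_2^2 - \|\nabla u_n\|_2^2 + \|u_n\|_{2\sigma+2}^{2\sigma+2} + o(1).
$$
Since $H^2(\R^N) \hookrightarrow L^{2\sigma+2}(\R^N)$ (valid because $2\sigma+2<4^\ast$), the right-hand side is bounded, so up to a subsequence $\alpha_n \to \alpha_c$, giving (ii).

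Next comes the nonvanishing step, which I expect to be the main technical obstacle. By Remark \ref{lowerboundsinMc}, $\|u_n\|_{2\sigma+2}^{2\sigma+2} \geq \delta > 0$ uniformly. Coupled with the $H^2$-boundedness of $(u_n)_n$ and a Lions-type vanishing lemma in $H^2$ (whose threshold is $4^\ast$, hence covers $2\sigma+2$), this produces translations $(y_n)_n\subset\R^N$ and constants $R,\eta>0$ with $\int_{B_R(y_n)}|u_n|^2 \geq \eta$; otherwise $u_n\to 0$ in $L^{2\sigma+2}$, contradicting the lower bound. Replacing $u_n$ by $u_n(\cdot + y_n)$---legitimate since $E$, $Q$ and the constraint $S(c)$ are translation invariant and the PS property is preserved---we obtain, up to subsequence, $u_n \rightharpoonup u_c$ in $H^2$ and $u_n\to u_c$ in $L^2_{\mathrm{loc}}$ with $u_c\neq 0$, proving (i). For (iv), I test (iii) against $\varphi\in C^\infty_c(\R^N)$ and pass to the limit: the quadratic terms converge by weak $H^2$-convergence, $\alpha_n u_n \rightharpoonup \alpha_c u_c$ in $L^2_{\mathrm{loc}}$ by (ii), and $|u_n|^{2\sigma}u_n \to |u_c|^{2\sigma}u_c$ in $L^1_{\mathrm{loc}}$ via the compact embedding $H^2(B_R) \hookrightarrow L^{2\sigma+2}(B_R)$.

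For the strong-convergence clause, assume $\|u_n-u_c\|_{2\sigma+2}\to 0$ and $\alpha_c>0$, and set $v_n := u_n-u_c \rightharpoonup 0$ in $H^2$. Subtracting (iv) from (iii) and pairing with $v_n$ gives
$$
\gamma \|\Delta v_n\|_2^2 + \|\nabla v_n\|_2^2 + \alpha_n \|v_n\|_2^2 = \int_{\R^N}\bigl(|u_n|^{2\sigma}u_n - |u_c|^{2\sigma}u_c\bigr) v_n\, dx - (\alpha_n-\alpha_c)\int_{\R^N} u_c v_n\, dx + o(1).
$$
The term with $(\alpha_n-\alpha_c)$ vanishes because $v_n\rightharpoonup 0$ in $L^2$ and $\alpha_n\to\alpha_c$; the nonlinear integral is controlled via the elementary inequality $\bigl||a|^{2\sigma}a-|b|^{2\sigma}b\bigr|\leq C(|a|^{2\sigma}+|b|^{2\sigma})|a-b|$ and H\"older by $C\|v_n\|_{2\sigma+2}\to 0$. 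Since $\alpha_c>0$, for $n$ large $\alpha_n \geq \alpha_c/2>0$, so all three nonnegative terms on the left are forced to zero, giving $\|u_n-u_c\|_{H^2}\to 0$. The decisive role of the positivity of $\alpha_c$---eventually secured via Lemma \ref{sign-la}---is precisely to couple the $L^2$-norm of $v_n$ to the quadratic form and prevent mass from leaking in the final step.
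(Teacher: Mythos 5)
Your proof is correct and follows essentially the same route as the paper: coercivity from Lemma \ref{coercive} gives boundedness, the Lions vanishing lemma combined with the uniform lower bound on $\|u_n\|_{2\sigma+2}$ from Remark \ref{lowerboundsinMc} yields a nontrivial weak limit after translation, and the Lagrange multipliers are extracted from the tangential Palais--Smale condition exactly as in the paper. Your final step (pairing the difference of the equations with $v_n=u_n-u_c$ and estimating term by term) is only a cosmetic variant of the paper's argument, which instead deduces convergence of the equivalent norm $\gamma\|\Delta\cdot\|_2^2+\|\nabla\cdot\|_2^2+\alpha_c\|\cdot\|_2^2$ and combines it with weak convergence; both hinge in the same way on $\alpha_c>0$.
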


\begin{proof}
First observe that, because of Lemma \ref{coercive}, we can assume without loss of generality that $(u_n)_n \subset \mathcal{M}(c)$ is a bounded sequence. After a suitable translation in $\R^N$ and up to the extraction of a subsequence, we can also assume that $u_n \rightharpoonup u_c \neq 0$. Indeed, if $u_c=0$, then, applying \cite[Lemma I.1]{Li2}, we infer that
$$\lim_{n\to \infty}\int_{\R^N}|u_n|^{2\sigma+2}\, dx =0,$$
which contradicts Remark \ref{lowerboundsinMc}.
 
Now since,  $(u_n)_n$ is bounded in $H^2(\R^N)$, we know from \cite[Lemma 3]{BeLi2} (adapted from the unit sphere to $S(c)$), that
$\| dE_{|_{S(c)}} (u_n)\|_{H^{-2}}=o_n(1)$ is equivalent to $\| dE (u_n) - \frac{1}{c}dE(u_n)[u_n] u_n\|_{H^{-2}}=o_n(1) $. Therefore, for any $\varphi \in H^2(\R^N)$, we infer that
\begin{align} \label{converge}
\gamma\int_{\R^N}\Delta u_n \Delta \varphi\, dx +\int_{\R^N}\nabla u_n \nabla \varphi \, dx  + \alpha_n
\int_{\R^N}u_n \varphi\, dx - \int_{\R^N}|u_n|^{2\sigma}u_n \varphi\, dx =o_n(1),
\end{align}
where
\begin{align} \label{alpha}
-\alpha_n =\frac{1}{c}\left(\gamma\int_{\R^N}|\Delta u_n|^2\, dx+\int_{\R^N}|\nabla u_n|^2\, dx-\int_{\R^N}|u_n|^{2\sigma+2}\, dx\right).
\end{align}
From \eqref{converge}-\eqref{alpha}, we deduce that $(ii)$-$(iii)$ hold whereas the weak convergence $u_n \rightharpoonup u_c$ in $H^2(\R^N)$ implies,  in a standard way, $(ii)$-$(iii)$ and $(iv)$.

Finally, assume further that $(u_n)_n $ strongly converges to $u_c$ in $L^{2 \sigma +2}(\R^N)$.
Recalling that $(u_n)_n $ is bounded in $H^2(\R^N)$ and using the strong convergence in 
$L^{2 \sigma +2}(\R^N)$, 
it follows from $(ii)$-$(iii)$ and $(iv)$ that
\begin{align}\label{conv1}
\begin{split}
&\gamma \int_{\R^N} |\Delta u_n|^2 \, dx + \int_{\R^N}|\nabla u_n|^2 \, dx + \alpha_c\int_{\R^N} |u_n|^2 \, dx \\
&= \gamma \int_{\R^N} |\Delta u_c|^2  \, dx + \int_{\R^N} |\nabla u_c|^2 \, dx + \alpha_c \int_{\R^N} |u_c|^2 \, dx + o_n(1).
\end{split}
\end{align}
Since $\alpha_c>0$ and we already know that  $u_n \rightharpoonup u_c$ in $H^2(\R^N)$, this implies 

that $u_n \rightarrow u$ in $H^2(\R^N)$ as $n \to \infty$. Thus the proof is complete.
\end{proof}

\section{Existence of ground states, proof of Theorem \ref{thmmain}} \label{supercritical}

In this section we give the proof of Theorem \ref{thmmain}. We start by a lemma which completes Lemma \ref{conv}. 
\begin{lem} \label{propcompactness}
Let $4 \leq \sigma N < 4^*$, $c >c_0$ and $(u_n)_n \subset \mathcal{M}(c)$ be a Palais-Smale sequence for $E$ restricted to $S(c)$, at the level $\Gamma(c)$. Assume that  $u_n\rightharpoonup u_c \neq 0$ in $H^2(\R^N)$. If
\begin{equation}\label{lemgammadecrbbb}
\Gamma(c) \leq \Gamma(\bar c) \ for \ any \ \bar c \in (c_0,c],
\end{equation}
then $\|u_n -u_c\|_{2 \sigma +2}\rightarrow 0$ as $n \to \infty$ and $E(u_c) = \Gamma(c)$. 
\end{lem}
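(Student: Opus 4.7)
The plan is to decompose $u_n=u_c+v_n$ with $v_n\rightharpoonup 0$ in $H^2(\R^N)$ and show that both $\|v_n\|_{2\sigma+2}\to 0$ and $E(u_c)=\Gamma(c)$. Since $(u_n)$ is bounded in $H^2(\R^N)$ by Lemma \ref{coercive} (together with Lemma \ref{boundcritical} in the mass-critical case), so is $(v_n)$. The Brezis-Lieb lemma and the Hilbert space structure of the quadratic terms then yield
\[
E(u_n)=E(u_c)+E(v_n)+o_n(1),\qquad Q(u_n)=Q(u_c)+Q(v_n)+o_n(1).
\]
Because $(u_n)\subset\mathcal{M}(c)$ gives $Q(u_n)=0$, and the Pohozaev identity (see the Appendix) applied to the limit equation in Lemma \ref{conv}(iv) gives $Q(u_c)=0$, we obtain $Q(v_n)=o_n(1)$. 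Inserting this in (\ref{relEQ1}) written for $v_n$ produces
\[
E(v_n)=\frac{\sigma N-4}{2\sigma N}\,\gamma\|\Delta v_n\|_2^2+\frac{\sigma N-2}{2\sigma N}\,\|\nabla v_n\|_2^2+o_n(1),
\]
whose coefficients are non-negative since $\sigma N\ge 4$, hence $\liminf_n E(v_n)\ge 0$.

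Next, let $c_1:=\|u_c\|_2^2\in(0,c]$. I claim $c_1>c_0$: this is automatic when $c_0=0$, and in the mass-critical case it follows from Theorem \ref{thm1}, since the nontrivial solution $u_c$ of \eqref{Pc} with mass $c_1$ forces $c_1>c_N^*=c_0$. As $u_c$ solves the limit equation and $Q(u_c)=0$, one has $u_c\in\mathcal{M}(c_1)$, so $E(u_c)\ge\Gamma(c_1)$; combined with the monotonicity hypothesis $\Gamma(c_1)\ge\Gamma(c)$, this yields $E(u_c)\ge\Gamma(c)$. From $E(u_n)=E(u_c)+E(v_n)+o_n(1)\to\Gamma(c)$ one reads off that $\lim_n E(v_n)$ exists and equals $\Gamma(c)-E(u_c)$; the two inequalities $E(u_c)\ge\Gamma(c)$ and $\liminf E(v_n)\ge 0$ then force $E(u_c)=\Gamma(c)$ and $\lim_n E(v_n)=0$.

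It remains to transfer $\lim_n E(v_n)=0$ into the vanishing of $\|v_n\|_{2\sigma+2}$. The identity of the first paragraph gives $\|\nabla v_n\|_2\to 0$, and also $\|\Delta v_n\|_2\to 0$ when $\sigma N>4$. In the mass-supercritical case, \eqref{G-N-H2-ineq} immediately yields
\[
\|v_n\|_{2\sigma+2}^{2\sigma+2}\le B_N(\sigma)\|\Delta v_n\|_2^{\sigma N/2}\|v_n\|_2^{2+2\sigma-\sigma N/2}\to 0.
\]
The mass-critical case $\sigma N=4$ is the main obstacle: here $\|\Delta v_n\|_2$ need not vanish and \eqref{G-N-H2-ineq} is useless. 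I will instead use the Gagliardo-Nirenberg bounds \eqref{G-N-H1-ineq} (when $N\le 3$) and \eqref{G-N-H1-ineq2} (when $N\ge 4$), both of which have a strictly positive exponent on $\|\nabla v_n\|_2$; since $\|v_n\|_2$ and $\|\Delta v_n\|_2$ remain bounded by the $H^2$-boundedness of $(v_n)$, the conclusion $\|v_n\|_{2\sigma+2}\to 0$ follows. The key technical point is precisely this last step, where the vanishing of the lower-order Dirichlet energy of $v_n$ has to compensate for the possible non-vanishing of its second-order Dirichlet energy via the appropriate interpolation inequality.
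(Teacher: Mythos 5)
Your proposal is correct and follows essentially the same route as the paper: Brezis--Lieb splitting of $E$ and $Q$, the Pohozaev identity $Q(u_c)=0$, the monotonicity hypothesis applied to $c_1=\|u_c\|_2^2$ to squeeze $E(u_n-u_c)\to 0$, and then the Gagliardo--Nirenberg inequalities \eqref{G-N-H1-ineq}/\eqref{G-N-H1-ineq2} to handle the mass-critical case where only $\|\nabla(u_n-u_c)\|_2$ vanishes. Your explicit verification that $c_1>c_0$ in the case $\sigma N=4$ (via the nonexistence part of Theorem \ref{thm1}) is a small point the paper leaves implicit, and is a welcome addition.
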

\begin{proof}
We first prove the strong convergence in $L^{2 \sigma +2}(\R^N)$ and the equality $E(u_c) = \Gamma(c)$. 
By Lemma \ref{conv} we know that there exists a  $\alpha_c \in \R$ such that $u_c$ satisfies \eqref{4nls} and thus $Q(u_c)=0$ by Lemma \ref{Pohozaevs}. By weak lower semicontinuity, we infer that $0 <c_1:=\|u_c\|_2^2 \leq c$. 
. 
Since $u_n \rightharpoonup u_c$ in  $H^2(\R^N)$ as $n \to \infty$, we have 
\begin{align*} \label{bl1}
\begin{split}
&\|\Delta (u_n -u_c) \|_{2}^2 +\|\Delta u_c \|_{2}^2 =\|\Delta  u_n  \|_{2}^2 +o_n(1), \\
&\|\nabla (u_n -u_c) \|_{2}^2 +\|\nabla u_c \|_{2}^2 =\|\nabla  u_n  \|_{2}^2 +o_n(1),\\
\end{split}
\end{align*}
while Brezis-Lieb's Lemma implies 
\begin{equation*}\label{bl2}
\| u_n -u_c \|_{2 \sigma +2}^{2\sigma+2} +\| u_c \|_{2 \sigma +2}^{2\sigma+2} =\|u_n  \|_{2 \sigma +2}^{2\sigma+2} +o_n(1).
\end{equation*}
Since $Q(u_c)=0,$ and $Q(u_n)=0$, it then follows 
that $Q(u_n-u_c)=o_n(1)$, as well as
\begin{align} \label{E}
E(u_n- u_c)+E(u_c) = \Gamma(c)+o_n(1).
\end{align}
As $u_c\in \mathcal{M}(c_1),$ \eqref{E} implies that
$$
E(u_n- u_c)+\Gamma(c_1)\leq \Gamma(c)+o_n(1)
$$
and from the monotonicity assumption on $\Gamma$, i.e. \eqref{lemgammadecrbbb}, we deduce that $E(u_n- u_c) \leq o_n (1)$. On the other hand, we also have
\begin{align} \label{compact1}
\begin{split}
& E(u_n-u_c)-\dfrac{2}{\sigma N}Q(u_n-u_c)\\
&=\gamma\dfrac{\sigma N-4}{2\sigma N}\int_{\R^N}|\Delta (u_n-u_c)|^2\, dx +\dfrac{\sigma N -2}{2\sigma N}\int_{\R^N}|\nabla (u_n-u_c)|^2\, dx,
\end{split}
\end{align}
and since $Q(u_n-u_c)=o_n(1)$ this implies that $E(u_n-u_c) \geq o_n(1)$. Consequently, we have shown that $E(u_n-u_c)=o_n(1)$. As a direct consequence, we conclude from \eqref{E} that $E(u_c) = \Gamma(c)$. When $\sigma N >4$ we also directly deduce from \eqref{compact1} that $\| \Delta(u_n -u_c)\|_{2}=o_n (1)$, $\| \nabla (u_n -u_c)\|_2 =o_n (1)$ 
and 
therefore $\|u_n-u_c\|_{2 \sigma +2}=o_n(1)$ since $Q(u_n-u_c)=o_n(1)$. We merely deduce that $\|\nabla (u_n-u_c)\|_2=o_n(1)$ when $\sigma N =4$ but since, by Lemma \ref{boundcritical}, the sequence $(\| \Delta(u_n -u_c)\|_2)_n$ is bounded, we reach the same conclusion using (\ref{G-N-H1-ineq}) if $N \leq 3$ or (\ref{G-N-H1-ineq2}) if $N \geq 4$. 
\end{proof}
\begin{remark}\label{tooeasy}
Note that if we were able to prove that the inequality in \eqref{lemgammadecrbbb} is strict for any $\bar c \in (c_0,c)$, it would prove that  $u_c\in \mathcal M(c)$. Indeed, in the proof of Lemma \ref{propcompactness}, if we assume that  $c_1<c$ then we reach the contradiction $E(u_c)\ge \Gamma(c_1)>\Gamma(c) = E(u_c)$. Such a strict monotonicity seems however out of reach but nevertheless it is possible to derive the weaker statement that $c \mapsto \Gamma(c)$ is nonincreasing, see 
Lemma \ref{lemgammadecr} in the next section.
\end{remark}
We can now prove our first existence result which basically states the existence of ground states in the range of masses $c > c_0$for which we can prove the positivity of the associated putative multiplier $\alpha_c$.

\begin{proof}[Proof of Theorem \ref{thmmain}]
Fix $c \in (c_0, c_{N, \sigma})$. From  Lemma \ref{psbis} we know that there exists a Palais-Smale sequence $(u_n)_n \subset \mathcal{M}(c)$ for $E$ restricted to $S(c)$ at level $\Gamma(c)$.  By Lemma \ref{conv},  $u_n \rightharpoonup u_c$ in $H^2(\R^N)$, where $u_c$ is a nontrivial solution to
\begin{equation}\label{eq:uc}
\gamma \Delta^2 u_c - \Delta u_c+ \alpha_c u_c = |u_c|^{2 \sigma}u_c 
\end{equation}
for some $\alpha_c \in \R$. Moreover, Lemma \ref{sign-la} implies that $\alpha_c>0$.  In the next section we prove, see Lemma \ref{lemgammadecr}, that $c \mapsto \Gamma(c)$ is nonincreasing.   The combination of Lemma \ref{propcompactness} and Lemma \ref{lemgammadecr}  shows that   
$u_n \to u_c$ in ${L^{2 \sigma +2}(\R^N)}$. 
We conclude from Lemma \ref{conv} that 
 $u_n \to u_c$ in $H^2(\R^N)$. This convergence implies in particular that $E(u_c)= \Gamma(c).$
\end{proof}

\section{Some properties of the function $c \mapsto \Gamma(c)$}\label{behavior}

In this section, we investigate the properties of the map $c\mapsto \Gamma(c)$. All the properties that we establish are also valid for the map $c\mapsto \Gamma_{rad}(c)$ since the arguments can be reproduced when we deal with radially symmetric functions only. Our study is summarized by Theorem \ref{gammmaprop} which is presented at the end of the section.
We begin by showing the continuity of $\Gamma$.

\begin{lem} \label{contprop}
Let $4 \leq \sigma N < 4^*$, then the function $c\mapsto \Gamma(c)$ is continuous on $(c_0, \infty)$.
\end{lem}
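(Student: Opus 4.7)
The plan is to prove continuity of $\Gamma$ at each fixed $c > c_0$ by separately establishing upper and lower semicontinuity via a rescaling--and--fiber-projection strategy built on Lemma \ref{unique}. Throughout, I fix $(c_n) \subset (c_0,\infty)$ with $c_n \to c$.

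For upper semicontinuity, given $\varepsilon > 0$ I will pick $u \in \mathcal{M}(c)$ with $E(u) \le \Gamma(c) + \varepsilon$ and transplant it into $S(c_n)$ via $v_n := \sqrt{c_n/c}\,u$. Then $v_n \to u$ strongly in $H^2(\R^N)$, and I will verify that the hypothesis of Lemma \ref{unique} is met for $v_n$ when $n$ is large: this is automatic when $\sigma N > 4$, while for $\sigma N = 4$ the required strict inequality $\gamma \|\Delta v_n\|_2^2 < (\sigma+1)^{-1}\|v_n\|_{2\sigma+2}^{2\sigma+2}$ follows from the identity $Q(u)=0$ combined with the strict positivity of $\|\nabla u\|_2$ guaranteed by Remark \ref{lowerboundsinMc}. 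Lemma \ref{unique} then yields a unique $\lambda_n > 0$ with $(v_n)_{\lambda_n} \in \mathcal{M}(c_n)$, and a continuity argument (or, in the critical case, the direct formula for $\lambda_n$ in terms of $\|\Delta v_n\|_2^2$, $\|\nabla v_n\|_2^2$ and $\|v_n\|_{2\sigma+2}^{2\sigma+2}$) shows $\lambda_n \to 1$. Continuity of $u \mapsto E(u_\lambda)$ then yields $\Gamma(c_n) \le E((v_n)_{\lambda_n}) \to E(u)$, so $\limsup_n \Gamma(c_n) \le \Gamma(c)$.

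For lower semicontinuity, I will pick $u_n \in \mathcal{M}(c_n)$ with $E(u_n) \le \Gamma(c_n) + 1/n$. The upper semicontinuity just proved ensures $(\Gamma(c_n))_n$ is bounded above, so $(E(u_n))_n$ is bounded; Lemma \ref{coercive} (if $\sigma N > 4$) or Lemma \ref{boundcritical} together with the boundedness of $(c_n)$ (if $\sigma N = 4$) then yields that $(u_n)_n$ is bounded in $H^2(\R^N)$. Setting $\tilde u_n := \sqrt{c/c_n}\,u_n \in S(c)$, I will again use $Q(u_n) = 0$ and the uniform lower bound $\|\nabla u_n\|_2 \ge \delta > 0$ from Remark \ref{lowerboundsinMc} to verify the hypothesis of Lemma \ref{unique} for $n$ large, and project via $(\tilde u_n)_{\mu_n} \in \mathcal{M}(c)$. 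The same quantitative estimates, now applied to the explicit expression of $\mu_n$ as a ratio of norms, force $\mu_n \to 1$; since also $\tilde u_n - u_n \to 0$ in $H^2(\R^N)$, this gives $\Gamma(c) \le E((\tilde u_n)_{\mu_n}) = E(u_n) + o(1) \le \Gamma(c_n) + o(1)$, whence $\Gamma(c) \le \liminf_n \Gamma(c_n)$.

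The main obstacle will be verifying the supremum-finiteness hypothesis of Lemma \ref{unique} in the mass-critical regime $\sigma N = 4$, where the relevant inequality on the rescaled sequence is tight on $\mathcal{M}(c)$ and its sign depends on whether $c_n$ approaches $c$ from above or below. I will handle it quantitatively by balancing the defect $|1 - (c/c_n)^\sigma|$, which vanishes as $c_n \to c$, against the strictly positive lower bound on $\|\nabla \cdot\|_2$ available on $\mathcal{M}(c)$ by Remark \ref{lowerboundsinMc}.
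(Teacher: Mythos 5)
Your proof is correct and follows essentially the same route as the paper: rescale a near-minimizer by $\sqrt{c_n/c}$ (resp. $\sqrt{c/c_n}$), project back onto the constraint set via the unique fiber parameter of Lemma \ref{unique}, and pass to the limit, with the same verification of the finiteness hypothesis in the case $\sigma N=4$ via $Q=0$ and the lower bound on $\|\nabla u\|_2$. The only cosmetic difference is that you track the projection parameter $\lambda_n\to 1$ directly, whereas the paper invokes the continuity of the fibered maximum $\max_{\lambda>0}E(u_\lambda)$ from \cite[Lemma 5.2]{BeJeLu}; these amount to the same thing.
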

\begin{proof}
Let us prove that, for any $c >c_0$, if $(c_n)_n \subset (c_0, \infty)$ is such that $c_n \rightarrow c$ as $n\rightarrow \infty$, then $\lim_{n \to \infty}\Gamma(c_n) = \Gamma(c)$.  
From the definition of $\Gamma(c),$ for any $\eps >0$, there exists $v \in \mathcal{M}(c)$ such that $E(v) \leq \Gamma(c) + \frac{\varepsilon}{2}$. Now, defining
$v_n:=\sqrt{\dfrac{c_n}{c}} \, v \in S(c_n)$,  
we clearly have
$$
\int_{\R^N}|\Delta v_n|^2\, dx \rightarrow \int_{\R^N}|\Delta v|^2\, dx  ,\ \int_{\R^N}|\nabla v_n|^2\, dx\rightarrow \int_{\R^N}|\nabla v|^2\, dx, \mbox{ and} $$
$$ \int_{\R^N}| v_n|^{2\sigma +2}\, dx\rightarrow \int_{\R^N}| v|^{2\sigma +2}\, dx .
$$
In particular, for $n \in \N$ large enough, we get
$$ \gamma \int_{\R^N} | \Delta v_n|^2 \, dx < \frac{N}{N+4} \int_{\R^N}|v_n|^{2 + \frac{8}{N}} \, dx$$
when $\sigma N =4$. Now, using \cite[Lemma 5.2]{BeJeLu} and the above convergences, we deduce that 
\begin{align*}
& \Gamma(c_n) \leq \max_{\lambda >0}E((v_n)_{\lambda})\\
& = \max_{\lambda>0} \left( \frac{{\lambda}^2}{2}{\gamma \int_{\R^N}|\Delta v_n|^2\, dx}+ \frac {\lambda}{2}{ \int_{\R^N}|\nabla v_n|^2\, dx}  - \dfrac{{\lambda}^{\sigma N /2}}{2(2\sigma +2)} { \int_{\R^N}|v_n|^{2\sigma +2}\, dx}\right) \\
&\leq \max_{\lambda>0} \left( \frac{{\lambda}^2}{2}{\gamma \int_{\R^N}|\Delta v|^2\, dx}+ \frac {\lambda}{2}{ \int_{\R^N}|\nabla v|^2\, dx}  - \dfrac{{\lambda}^{\sigma N /2}}{2(2\sigma +2)} { \int_{\R^N}|v|^{2\sigma +2}\, dx}\right) +\dfrac{\varepsilon}{2}\\
&= \max_{\lambda>0}E((v)_{\lambda})+\dfrac{\varepsilon}{2}=E(v)+\dfrac{\varepsilon}{2}\leq \Gamma(c)+\varepsilon.
\end{align*}
This shows that
\begin{equation}\label{limsup}
\limsup_{n \to \infty}\Gamma(c_n) \leq \Gamma(c).
\end{equation}
Next, let $(u_n)_n \subset \mathcal{M}(c_n)$ be such that
\begin{align} \label{inf1}
E(u_n)\leq \Gamma(c_n) +\dfrac{\varepsilon}{2}.
\end{align}
Since $Q(u_n)=0$, using \eqref{limsup} and \eqref{inf1}, we infer that, for $n \in \N$ large enough
\begin{align*}
\gamma\dfrac{\sigma N-4}{2\sigma N}\int_{\R^N}|\Delta u_n|^2\, dx +\dfrac{\sigma N -2}{2\sigma N}\int_{\R^N}|\nabla u_n|^2\, dx  =  E(u_n) \leq
\Gamma(c_n) +\dfrac{\varepsilon}{2} \leq \Gamma(c) +\dfrac{3\varepsilon}{4}
\end{align*}
and thus, when $\sigma N >4$ we immediately deduce that $(u_n)_n \subset H^2(\R^N)$ is bounded. The same conclusion holds true when $\sigma N=4$ by Lemma \ref{boundcritical}. Thus we can assume without loss of generality that 
$$
\int_{\R^N}|\Delta u_n|^2\, dx \rightarrow A, \ \int_{\R^N}|\nabla u_n|^2\, dx\rightarrow B\ \text{ and }\  \int_{\R^N}| u_n|^{2\sigma +2}\, dx\rightarrow  C.
$$
It follows from Remark \ref{lowerboundsinMc} that 
$A>0$ and $C>0$.

Now we define
$\tilde{u}_n:=\sqrt{\dfrac{c}{c_n}} \, u_n \in S(c).$
Using twice \cite[Lemma 5.2]{BeJeLu}, we obtain that 
\begin{align*}
& \Gamma(c) \leq \max_{\lambda >0}E((\tilde{u}_n)_{\lambda})\\
&= \max_{\lambda>0} \left( \frac{{\lambda}^2}{2}{\gamma  \left(\frac{c}{c_n}\right)\int_{\R^N}|\Delta u_n|^2\, dx}+ \frac {\lambda}{2} \left(\frac{c}{c_n}\right){ \int_{\R^N}|\nabla u_n|^2\, dx}  - \dfrac{{\lambda}^{\sigma N /2}}{2(2\sigma +2)}\left(\frac{c}{c_n}\right)^{\sigma +1} { \int_{\R^N}|u_n|^{2\sigma +2}\, dx}\right) \\
&\leq \max_{\lambda>0} \left( \frac{{\lambda}^2}{2}{\gamma A }+ \frac {\lambda}{2} B  - \dfrac{{\lambda}^{\sigma N /2}}{2(2\sigma +2)}  C \right) +\dfrac{\varepsilon}{2}\\
&\leq \max_{\lambda>0} \left( \frac{{\lambda}^2}{2}{\gamma \int_{\R^N}|\Delta u_n|^2\, dx}+ \frac {\lambda}{2}{ \int_{\R^N}|\nabla u_n|^2\, dx}  - \dfrac{{\lambda}^{\sigma N /2}}{2(2\sigma +2)} { \int_{\R^N}|u_n|^{2\sigma +2}\, dx}\right) +\dfrac{3\varepsilon}{4}\\
&= \max_{\lambda>0}E((u_n)_{\lambda})+\dfrac{3\varepsilon}{4}=E(u_n)+\dfrac{3\varepsilon}{4}\leq \Gamma(c_n)+\varepsilon,
\end{align*}
from which we conclude that
\begin{equation}\label{liminf}
\Gamma(c) \leq \liminf_{n \to \infty}\Gamma(c_n).
\end{equation}
The conclusion follows from \eqref{limsup} and \eqref{liminf}.
\end{proof}
We now establish the key monotonicity property that has been used in the proof of Theorem \ref{thmmain}.
\begin{lem}\label{lemgammadecr}
Let $4 \leq \sigma N < 4^*$, then the function $c\mapsto \Gamma(c)$ is nonincreasing on $(c_0, \infty)$.
\end{lem}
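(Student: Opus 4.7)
The plan is to show $\Gamma(c_2) \leq \Gamma(c_1)$ for any $c_0 < c_1 < c_2$. Given $\varepsilon > 0$, I would choose a near-minimizer $u \in \mathcal{M}(c_1)$ with $E(u) \leq \Gamma(c_1) + \varepsilon$, and use the minimax characterization \eqref{minmax}, namely $\Gamma(c_2) = \inf_{v \in S(c_2)} \sup_{\lambda > 0} E(v_\lambda)$ (understood over the open subset $\mathcal{E}(c_2) \subset S(c_2)$ in the mass-critical case $\sigma N = 4$). Since $u \in \mathcal{M}(c_1)$, Lemma \ref{unique} gives $\sup_\lambda E(u_\lambda) = E(u)$, so it suffices to construct some $v \in S(c_2)$ with $\sup_\lambda E(v_\lambda)$ arbitrarily close to $E(u)$.

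The test function I would build is obtained by attaching to $u$ a spatially separated, highly dispersed bump carrying the extra mass $c_2 - c_1$. Fix $\psi \in C_c^\infty(\R^N)$ with $\|\psi\|_2 = 1$ and, for $\eta > 0$ small and $|y|$ large, set
\[
\phi_{\eta, y}(x) := \sqrt{c_2 - c_1}\, \eta^{N/2}\psi(\eta (x - y)).
\]
Scaling yields $\|\phi_{\eta, y}\|_2^2 = c_2 - c_1$, while $\|\nabla \phi_{\eta, y}\|_2^2$, $\|\Delta \phi_{\eta, y}\|_2^2$, and $\|\phi_{\eta, y}\|_{2\sigma+2}^{2\sigma+2}$ are of order $\eta^2$, $\eta^4$, and $\eta^{\sigma N}$ respectively, all vanishing as $\eta \to 0^+$ with constants independent of $y$. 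Putting $w_{\eta, y} := u + \phi_{\eta, y}$ and renormalizing $v_{\eta, y} := \sqrt{c_2/\|w_{\eta,y}\|_2^2}\, w_{\eta, y} \in S(c_2)$, I would let first $|y| \to \infty$, so that the cross terms between $u$ and $\phi_{\eta, y}$ (in the $L^2$, $H^1$, $H^2$, and $L^{2\sigma+2}$ inner products) vanish, and then $\eta \to 0^+$, to obtain
\[
\|\Delta v_{\eta, y}\|_2^2 \to \|\Delta u\|_2^2, \quad \|\nabla v_{\eta, y}\|_2^2 \to \|\nabla u\|_2^2, \quad \|v_{\eta, y}\|_{2\sigma+2}^{2\sigma+2} \to \|u\|_{2\sigma+2}^{2\sigma+2}.
\]

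From these convergences, $\lambda \mapsto E(v_{\eta, y, \lambda})$ converges locally uniformly in $\lambda$ to $\lambda \mapsto E(u_\lambda)$. By Lemma \ref{unique} together with Remark \ref{regularitelambda}, the maximizer $\lambda_{v_{\eta, y}}$ exists, is unique, and depends continuously on the underlying norms, so $\lambda_{v_{\eta, y}} \to \lambda_u = 1$ and $\sup_\lambda E(v_{\eta, y, \lambda}) \to E(u) \leq \Gamma(c_1) + \varepsilon$. This yields $\Gamma(c_2) \leq \Gamma(c_1) + 2\varepsilon$ for $\eta$ small and $|y|$ large, and letting $\varepsilon \to 0$ concludes. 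The main technical hurdle I anticipate is in the mass-critical case $\sigma N = 4$: one must verify that $v_{\eta, y} \in \mathcal{E}(c_2)$, i.e., $\frac{\gamma}{2}\|\Delta v_{\eta, y}\|_2^2 < \frac{1}{2\sigma+2}\|v_{\eta, y}\|_{2\sigma+2}^{2\sigma+2}$; this should follow from the strict inequality \eqref{s} for $u$ combined with the smallness of the perturbation, but it forces the two-parameter passage to the limit ($|y|\to\infty$ first, then $\eta \to 0^+$) to be carried out carefully.
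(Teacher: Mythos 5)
Your proposal is correct and follows essentially the same route as the paper: take a near-minimizer $u\in\mathcal M(c_1)$, attach a spatially separated, highly dispersed bump carrying the extra mass $c_2-c_1$, and pass to the limit using the continuity of the fibering maximum $\max_{\lambda>0}E(\cdot_\lambda)$ (the paper's appeal to \cite[Lemma 5.2]{BeJeLu}), with the same extra verification of $\sup_{\lambda>0}E<\infty$ in the case $\sigma N=4$. The only difference is cosmetic: the paper truncates $u$ to compact support and places the bump on a disjoint annulus so that the total mass is exactly $c_2$, whereas you keep $u$ intact, send $|y|\to\infty$ to kill the cross terms, and renormalize to land in $S(c_2)$.
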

\begin{proof}
 To prove the lemma we have to show that if $0 < c_1 < c_2$, then $\Gamma(c_2) \leq \Gamma(c_1)$.
 Noting that, see \eqref{minmax}, 
$$\Gamma(c) = \inf_{u\in S(c)}\sup_{\lambda >0} E(u_\lambda),$$
for any $\varepsilon>0$  there exists a $u_1\in \mathcal{M}(c_1)$ such that
\begin{equation}
\label{fromdef1}
E(u_1)\leq \gamma (c_1)+\dfrac{\varepsilon}{2} \quad \mbox{and} \quad \max_{\lambda>0} E((u_1)_\lambda)=E(u_1)
\end{equation}
where we recall that $(u_1)_{\lambda}(x) := \lambda^{\frac{N}{4}}u_1(\sqrt{\lambda}x)$.
For $\delta>0$, one can find $u_1^\delta \in H^2(\R^N)$ such that $supp\  u_1^{\delta} \subset  B_{\frac{1}{\delta}}(0)$ and $||u_1 - u_1^{\delta}||= o(\delta)$. Thus, as $\delta \to 0$, we have
$$ \int_{\R^N}|\Delta u_1^{\delta}|^2\, dx \rightarrow \int_{\R^N}|\Delta u_1|^2\, dx, \quad  \int_{\R^N}|\nabla u_1^{\delta}|^2\, dx \rightarrow \int_{\R^N}|\nabla u_1|^2\, dx,$$
and $$ \int_{\R^N}|u_1^{\delta}|^{2\sigma +2}\,dx \to \int_{\R^N}|u_1|^{2\sigma +2}\, dx.$$

Now, take $v^\delta\in C^\infty_0 (\R^N)$ such that $supp\ v^\delta \subset B_{\frac{2}{ \delta}+1}(0)\setminus  B_{\frac{2} {\delta}}(0)$, and set
$$
v_0^\delta := (c_2 -\|u_1^\delta \|_2^2)^{\frac 12} \dfrac{v^\delta}{\|v^\delta\|_2}.$$
Define, for $\lambda \in (0,1)$, $w^{\delta}_\lambda :=u_1^\delta +(v_0^\delta)_\lambda .$
Since
$$
\text{dist}(supp\ (v^\delta_0)_{\lambda} , supp\ u_1^\delta )\geq  \frac{1}{\delta}\left(\dfrac{2}{\sqrt{\lambda}}-1\right)>0,
$$
we have that $\|w^{\delta}_\lambda\|_2^2= c_2$. Also, by a standard scaling argument, we see that, as $\lambda $, $\delta \to 0$,
$$
\int_{\R^N}|\Delta w^{\delta}_\lambda|^2\, dx  \rightarrow \int_{\R^N}|\Delta u_1|^2\, dx, \quad 
\int_{\R^N}|\nabla w^{\delta}_\lambda|^2\, dx  \rightarrow \int_{\R^N}|\nabla u_1|^2\, dx, $$
and
$$\int_{\R^N}|w^{\delta}_\lambda|^{2\sigma +2}\,dx   \to \int_{\R^N}|u_1|^{2\sigma +2}\, dx.$$
In \cite[Lemma 5.2]{BeJeLu} it is proved that the function $f : \R^+ \times (\R^+ \cup \{0\}) \times \R^+ \mapsto \R$ defined by $f(a,b,c) := \max_{t>0}(t^2a + tb - c t^{\frac{\sigma N}{2}})$ is continuous if $\sigma N >4$. Setting $(w_{\lambda}^{\delta})_t = t^{\frac{N}{4}}w_{\lambda}^{\delta}(\sqrt{t}x)$, we deduce from the above convergence and (\ref{minmax}) that for $\lambda, \delta>0$ small enough,
$$
\Gamma(c_2) \leq \max_{t>0}E ((w^{\delta}_\lambda)_t )\leq \max_{t>0} E((u_1)_t) +\dfrac{\varepsilon}{2}=E(u_1)+\dfrac{\varepsilon}{2}\leq \Gamma(c_1)+ \varepsilon
$$
and this concludes the proof when $\sigma N >4$. If $\sigma N =4$, observe that for $\lambda, \delta >0$ small enough, we have
$$
\gamma \int_{\R^N}|\Delta w^{\delta}_\lambda|^2 \, dx < \frac{N}{N+4} \int_{\R^N}|w^{\delta}_\lambda|^{2 + \frac 8N} \, dx,
$$
and therefore $\sup_{t>0}E((w_{\lambda}^{\delta})_t) <\infty$. Under this condition, it is straightforward to extend \cite[Lemma 5.2]{BeJeLu} and then we conclude as in the case $ \sigma N >4$.
\end{proof}

\begin{remark}\label{rem:gammaradmonotone}
Since we can reproduce the proof of Lemma \ref{lemgammadecr} when we restrict $\mathcal M(c)$ to radially symmetric functions, i.e. when we consider $\mathcal M_{rad}(c)$, we infer that the map $c\mapsto \gamma_{rad} (c)$ is nonincreasing on $(c_0, \infty)$ under the same assumption $4 \leq \sigma N < 4^*$.
\end{remark}

\begin{lem} \label{monotoneprop}
Let $4 \leq \sigma N < 4^*$ and $c>c_0$. If there exists $u\in S(c)$ such that $E(u) = \Gamma(c)$ and
\begin{equation}\label{eq:monotone}
\gamma \Delta^2 u - \Delta u + \alpha u = |u|^{2 \sigma}u,
\end{equation}
then $\alpha \geq 0$. If moreover $\alpha >0$, then the function $c \mapsto \Gamma(c)$ is strictly decreasing in a right neighborhood of $c$.
\end{lem}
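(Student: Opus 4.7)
The key observation is that, by Lemma \ref{Pohozaevs} in the Appendix, any solution of \eqref{eq:monotone} automatically satisfies $Q(u)=0$, so $u\in\mathcal{M}(c)$ and $u$ minimizes $E$ on $\mathcal{M}(c)$. My plan is to use $u$ as a building block for a one-parameter family of competitors on the neighbouring manifolds $\mathcal{M}(\mu c)$, and to relate the Lagrange multiplier $\alpha$ to the rate of change of $\Gamma$ at $c$.

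For $\mu$ close to $1$, I set $\tilde u_\mu := \sqrt{\mu}\,u \in S(\mu c)$ and look for $\lambda_\mu>0$ such that $(\tilde u_\mu)_{\lambda_\mu}\in \mathcal{M}(\mu c)$, i.e.\ $Q((\tilde u_\mu)_{\lambda_\mu})=0$. Since $u\in\mathcal{M}(c)$, we have $\lambda_1=1$. A direct computation using $Q(u)=0$ gives
\[
\partial_\lambda Q((\tilde u_1)_\lambda)\big|_{\lambda=1} = \frac{\gamma(4-\sigma N)}{2}\|\Delta u\|_2^2-\frac{\sigma N-2}{4}\|\nabla u\|_2^2 < 0
\]
in both the supercritical case $\sigma N>4$ and the critical case $\sigma N=4$ (note that $u\ne 0$, so $\|\nabla u\|_2>0$). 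The implicit function theorem then produces a $C^1$ curve $\mu\mapsto \lambda_\mu$ on some neighbourhood $I$ of $1$, and by shrinking $I$ if necessary we also guarantee that $\mu c>c_0$ for all $\mu\in I$, so that $\Gamma(\mu c)$ is well defined.

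Define $h(\mu):=E((\tilde u_\mu)_{\lambda_\mu})$. By construction $h(\mu)\geq \Gamma(\mu c)$, with equality at $\mu=1$ since $E(u)=\Gamma(c)$. Using the envelope identity $\partial_\lambda E((\tilde u_\mu)_\lambda)|_{\lambda=\lambda_\mu}=\lambda_\mu^{-1}Q((\tilde u_\mu)_{\lambda_\mu})=0$, one differentiates in the $\mu$-variable only and obtains
\[
h'(1)=\frac{\gamma}{2}\|\Delta u\|_2^2+\frac{1}{2}\|\nabla u\|_2^2-\frac{1}{2}\|u\|_{2\sigma+2}^{2\sigma+2}.
\]
Testing \eqref{eq:monotone} against $u$ yields $\gamma\|\Delta u\|_2^2+\|\nabla u\|_2^2+\alpha c=\|u\|_{2\sigma+2}^{2\sigma+2}$, hence $h'(1)=-\alpha c/2$.

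The two conclusions of the lemma now follow. If $\alpha<0$, then $h'(1)>0$ and, for some $\mu<1$ sufficiently close to $1$, $\Gamma(\mu c)\leq h(\mu)<h(1)=\Gamma(c)$; this contradicts the monotonicity $\Gamma(\mu c)\geq \Gamma(c)$ provided by Lemma \ref{lemgammadecr}, so $\alpha\geq 0$. If instead $\alpha>0$, then $h'(1)<0$, which gives $\Gamma(\mu c)\leq h(\mu)<\Gamma(c)$ for every $\mu>1$ close to $1$, exactly the claimed strict decrease of $\Gamma$ in a right neighbourhood of $c$. The only delicate point in this plan is to establish the smooth dependence of $\lambda_\mu$ simultaneously in the critical regime $\sigma N=4$ and the supercritical regime $\sigma N>4$; this is achieved by the uniform negative sign of $\partial_\lambda Q$ displayed above, which covers both cases at once.
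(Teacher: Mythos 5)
Your proof is correct and follows essentially the same route as the paper's: both use the two-parameter family $\lambda^{N/4}\sqrt{t}\,u(\sqrt{\lambda}\,x)$, the implicit function theorem applied to $Q=0$ (using the nonvanishing $\lambda$-derivative at $(1,1)$), the computation that the mass-derivative of the energy at $(1,1)$ equals $-\tfrac{1}{2}\alpha c$, and the nonincreasing property of $\Gamma$ from Lemma \ref{lemgammadecr} to exclude $\alpha<0$. Your packaging of the strict inequality via the envelope identity is a marginally cleaner bookkeeping than the paper's direct appeal to $\partial^2_\lambda\beta_E(1,1)<0$, but the underlying argument is identical.
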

\begin{proof}
Let $u\in S(c)$ be such that $E(u) = \Gamma(c)$ and \eqref{eq:monotone} holds with $\alpha\in\R$. We claim that if $\alpha>0$, respectively $\alpha<0$ , the map $c  \mapsto \Gamma(c)$ is strictly decreasing, respectively strictly increasing, in a right neigbourhood of $c$. 
Let $u_{t,\lambda}(x):=\lambda^{\frac{N}{4}} \sqrt{t} u(\sqrt{\lambda} x)$ for $t,\lambda > 0$. We define
$\beta_E (t,\lambda ):=E(u_{t,\lambda})$, and $\beta_Q (t,\lambda):= Q(u_{t,\lambda})$. We compute
$$
\dfrac{\partial \beta_E}{\partial t}(1,1)=- \dfrac{1}{2}\alpha\, c,\ \,  \dfrac{\partial \beta_E}{\partial \lambda}(1,1)=0,\ \  \dfrac{\partial^2 \beta_E}{\partial \lambda^2}(1,1)<0,
$$
which yields for $|\delta_\lambda|$ small enough and $\delta_t>0$,
\begin{align} \label{lessthan}
\beta_E (1+\delta_t , 1+\delta_\lambda)<\beta_E (1,1), \mbox{ if } \alpha >0,
\end{align}
or
\begin{align} \label{lessthan+}
\beta_E (1-\delta_t , 1+\delta_\lambda)<\beta_E (1,1), \mbox{ if } \alpha <0.
\end{align}
Observe that $\beta_{Q}(1, 1)=0$, and $\dfrac{\partial \beta_Q}{\partial \lambda}(1,1)\neq 0.$
From the Implicit Function Theorem, we deduce the existence of $\varepsilon>0$ and of a continuous function $g :[1-\varepsilon ,1+\varepsilon] \mapsto \R$ satisfying $g(1)=1$ such that $\beta_Q (t,g(t))=0$ for $t\in [1-\varepsilon ,1+\varepsilon]$.
Therefore, we infer from \eqref{lessthan}, if $\alpha>0$, that
$$
\Gamma((1+\varepsilon)c)=\inf_{u\in \mathcal{M}((1+\varepsilon )c)}E(u)\leq E (u_{1+\varepsilon , g(1+\varepsilon)})< E(u_c)=\Gamma(c).
$$
This proves the last assertion of the lemma. 
If $\alpha<0$, we rather conclude from \eqref{lessthan+} that 
$$
\Gamma((1-\varepsilon)c)=\inf_{u\in \mathcal{M}((1-\varepsilon )c)}E(u)\leq E (u_{1-\varepsilon , g(1-\varepsilon)})< E(u_c)=\Gamma(c).$$
Since we know from Lemma \ref{lemgammadecr} that $\gamma$ is nonincreasing, the case $\alpha<0$ is impossible. 

\end{proof}

\begin{cor}\label{gammamonostrict}
Let $N \geq 1$ and $4 \leq \sigma N< 4^*$. 
The map $c \mapsto \Gamma(c)$ is strictly decreasing on $(c_0,c_{N,\sigma})$. 
\end{cor}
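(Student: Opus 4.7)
The plan is to combine the global nonincreasing property of $\Gamma$ from Lemma \ref{lemgammadecr} with the local strict decrease guaranteed by Lemma \ref{monotoneprop}. Everything needed is already in place: it remains only to assemble these facts along with Theorem \ref{thmmain}.

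More precisely, I would proceed as follows. Fix $c_1, c_2 \in (c_0, c_{N,\sigma})$ with $c_1 < c_2$. Theorem \ref{thmmain} provides a ground state $u_{c_1} \in S(c_1)$ satisfying $E(u_{c_1}) = \Gamma(c_1)$ and solving \eqref{Pc} with a strictly positive Lagrange multiplier $\alpha_{c_1} > 0$. This is precisely the hypothesis required by the second part of Lemma \ref{monotoneprop}, which then yields the existence of $\delta > 0$ such that
$$ \Gamma(c') < \Gamma(c_1) \qquad \text{for every } c' \in (c_1, c_1 + \delta). $$
Shrinking $\delta$ if necessary so that $c_1 + \delta \leq c_2$ (which is possible since $c_1 < c_2$), I choose some $c' \in (c_1, c_1 + \delta]$ with $c' \leq c_2$. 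Finally, applying Lemma \ref{lemgammadecr} on the interval $[c', c_2]$, I get $\Gamma(c_2) \leq \Gamma(c')$, and combining with the previous strict inequality yields
$$ \Gamma(c_2) \leq \Gamma(c') < \Gamma(c_1), $$
which is the desired strict monotonicity.

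There is essentially no obstacle at this final stage: the substantial work has already been done in proving Theorem \ref{thmmain} (attainment of $\Gamma(c_1)$ together with $\alpha_{c_1} > 0$), in Lemma \ref{monotoneprop} (local strict decrease when $\alpha > 0$), and in Lemma \ref{lemgammadecr} (global nonincreasing character of $\Gamma$). The restriction $c < c_{N,\sigma}$ enters only to ensure that Theorem \ref{thmmain} is applicable at the left endpoint $c_1$, guaranteeing both the existence of the ground state and the sign of its Lagrange multiplier. Note that the same reasoning, together with Remark \ref{rem:gammaradmonotone}, would give the analogous statement for the radial minimax level $\Gamma_{rad}$ on the corresponding range of masses.
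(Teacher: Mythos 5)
Your proof is correct and follows the same route as the paper, which simply states that the corollary "follows directly by combining Theorem \ref{thmmain} and Lemma \ref{monotoneprop}". You have merely made explicit the propagation step via the nonincreasing property of Lemma \ref{lemgammadecr}, which is implicit in the paper's one-line argument.
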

\begin{proof}
The proof follows directly by combining Theorem \ref{thmmain} and Lemma \ref{monotoneprop}. 
\end{proof}

\begin{lem}\label{limitprop} 
For all $4 \leq \sigma N < 4^*$, we have $\lim_{c\rightarrow c_0^+}\Gamma(c)= +\infty.$
\end{lem}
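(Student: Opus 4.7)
The plan is to derive uniform lower bounds on $E(u)$ over $\mathcal{M}(c)$ that blow up as $c \to c_0^+$. Starting from identity \eqref{relEQ1}, which for $u\in \mathcal{M}(c)$ reads
$$E(u) = \gamma \frac{\sigma N - 4}{2\sigma N}\|\Delta u\|_2^2 + \frac{\sigma N - 2}{2\sigma N}\|\nabla u\|_2^2,$$
I will treat the two cases $\sigma N > 4$ and $\sigma N = 4$ separately.

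For the mass-supercritical case $\sigma N > 4$ (with $c_0 = 0$), the coefficient in front of $\|\Delta u\|_2^2$ is strictly positive, so it suffices to prove that $\inf_{\mathcal{M}(c)} \|\Delta u\|_2^2 \to \infty$ as $c\to 0^+$. This is exactly what inequality \eqref{nonexe2} gives: combining $Q(u) = 0$ with the sharp biharmonic Gagliardo--Nirenberg inequality \eqref{G-N-H2-ineq} yields $(\|\Delta u\|_2^2)^{1 - \sigma N/4} \leq K c^{1+\sigma-\sigma N/4}$. The assumption $\sigma N > 4$ makes the exponent on $\|\Delta u\|_2$ negative, and one checks that the exponent on $c$ is positive in the admissible range of $\sigma$; as observed in \eqref{limitc}, this forces $\|\Delta u\|_2^2 \to +\infty$ uniformly on $\mathcal{M}(c)$ as $c \to 0^+$.

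For the mass-critical case $\sigma N = 4$ (with $c_0 = c_N^*$), the first coefficient vanishes and I am left with $E(u) = \tfrac{1}{4}\|\nabla u\|_2^2$, so I must instead show that $\inf_{\mathcal{M}(c)} \|\nabla u\|_2^2 \to +\infty$ as $c \to (c_N^*)^+$. Two complementary estimates are needed. First, plugging \eqref{ajout2} into $Q(u)=0$ gives
$$\|\nabla u\|_2^2 \leq 2\gamma\, \epsilon(c)\, \|\Delta u\|_2^2, \qquad \epsilon(c):=\left(c/c_N^*\right)^{4/N}-1,$$
with $\epsilon(c)\downarrow 0$ as $c\downarrow c_N^*$. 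Second, Remark \ref{Rem:equivlapgrad} (a specialization of Lemma \ref{boundcritical} to $\sigma N=4$) provides the converse-type bound $\|\Delta u\|_2^2 \leq C\|\nabla u\|_2^4$ on $\mathcal{M}(c)$, with a constant $C$ depending continuously on $c$ and hence uniformly bounded for $c$ in a right-neighborhood of $c_N^*$. Combining the two yields $\|\nabla u\|_2^2 \geq (2\gamma\, \epsilon(c)\, C)^{-1}\to +\infty$, and hence $\Gamma(c)\to +\infty$.

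The main obstacle is this second case: the vanishing of the $\|\Delta u\|_2^2$ coefficient in the expression for $E(u)$ on $\mathcal{M}(c)$ blocks the direct strategy used when $\sigma N > 4$, and the standard interpolation inequality \eqref{interpolation} only produces upper, not lower, bounds on $\|\nabla u\|_2^2$ in terms of $\|\Delta u\|_2^2$. What makes the argument close is precisely the coexistence on $\mathcal{M}(c)$ of the two scales $\|\nabla u\|_2^2 \lesssim \epsilon(c)\|\Delta u\|_2^2$ and $\|\Delta u\|_2^2 \lesssim \|\nabla u\|_2^4$, one coming from the saturation at $c=c_N^*$ of the sharp biharmonic Gagliardo--Nirenberg inequality, the other from the subcritical Gagliardo--Nirenberg inequalities in $H^1$ expressed in \eqref{G-N-H1-ineq}--\eqref{G-N-H1-ineq2}.
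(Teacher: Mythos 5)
Your argument is correct, and in the mass-supercritical case it coincides with the paper's: there the proof is exactly \eqref{relEQ1} combined with the lower bound on $\|\Delta u\|_2^2$ coming from $Q(u)=0$ and \eqref{G-N-H2-ineq} (inequality \eqref{lowerboundlap}, which is the same estimate as \eqref{nonexe2}). In the mass-critical case your assembly of the two key estimates differs slightly from the paper's, in a way worth noting. The paper first observes, as you do, that $Q(u)=0$ and \eqref{ajout2} give $E(u)\le \frac12\bigl((c/c_N^*)^{4/N}-1\bigr)\gamma\|\Delta u\|_2^2$ on $\mathcal M(c)$; it then invokes the positivity of $\Gamma$ (Lemma \ref{coercive}) and its monotonicity (Lemma \ref{lemgammadecr}) to get a uniform $\delta>0$ with $\delta\le\Gamma(c)\le E(u)$, which forces $\|\Delta u\|_2^2\to\infty$, and only then transfers this to $\|\nabla u\|_2^2$ via Lemma \ref{boundcritical}. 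You instead combine $\|\nabla u\|_2^2\le 2\gamma\,\epsilon(c)\|\Delta u\|_2^2$ directly with the reverse bound $\|\Delta u\|_2^2\le C\|\nabla u\|_2^4$ of Remark \ref{Rem:equivlapgrad} and divide by $\|\nabla u\|_2^2>0$, obtaining the explicit lower bound $\|\nabla u\|_2^2\ge (2\gamma\,\epsilon(c)C)^{-1}$. This is a genuine (if mild) streamlining: it removes the dependence on Lemma \ref{lemgammadecr} and Lemma \ref{coercive}, making the lemma self-contained modulo \eqref{ajout2} and Lemma \ref{boundcritical}, and it yields a quantitative rate of blow-up of $\Gamma(c)$ as $c\downarrow c_N^*$. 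Your side remarks are also accurate: the exponent $1+\sigma-\sigma N/4$ in \eqref{nonexe2} is indeed positive throughout $4<\sigma N<4^*$, and the constant $C$ in \eqref{equivlapgradsigmaN=4} is uniformly bounded for $c$ in a bounded right neighbourhood of $c_N^*$, as needed.
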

\begin{proof}
When $4 < \sigma N < 4^*$, the claim follows directly from  \eqref{relEQ1} and \eqref{lowerboundlap}.
When $\sigma N =4$, to show that $ \lim_{c\rightarrow {c^*_N}^+}\Gamma(c)=\infty$, we need to prove that $\int_{\R^N}|\nabla u_{c_n}|^2 \, dx \to \infty$ for any sequences $(c_n)_n$ with $c_n \to {c^*_N}^+$  and $(u_{c_n})_n \in  \mathcal{M}(c_n)$.
First we observe that for $u \in \mathcal{M}(c)$, using \eqref{poho}, namely that $Q(u)=0$, and \eqref{ajout2},
\begin{align} \label{infty}
\begin{split}
\Gamma(c) \leq E(u)
& \leq \frac {\gamma}{2} \int_{\R^N}|\Delta u|^2 \, dx -\frac{N}{2N+8} \int_{\R^N}|u|^{2 + \frac 8N}\, dx\\
& \leq \frac 12 \left( \left(\frac {c}{c^*_N}\right)^{\frac 4N} -1\right) \gamma \int_{\R^N}|\Delta u|^2 \, dx.
\end{split}
\end{align}
Since $\Gamma(c) >0$, for any $c >c_0$, see Lemma \ref{coercive}, and $c \mapsto \Gamma(c)$ is 
nonincreasing on $(c_0, \infty),$ see Lemma \ref{lemgammadecr}, we deduce the existence of a $\delta >0$ such that,  for any sequences $(c_n)_n \subset \R$ with $c_n \to {c^*_N}^+$  and $(u_{c_n})_n \subset  \mathcal{M}(c_n)$, we have
\begin{equation}\label{ajout3}
0 < \delta \leq \Gamma(c_n) \leq \frac 12 \left( \left(\frac {c_n}{c^*_N}\right)^{\frac 4N} -1\right) \gamma \int_{\R^N}|
\Delta u_{c_n}|^2 \, dx,
\end{equation}
from which we deduce that
\begin{equation}\label{explosion}
\int_{\R^N}|\Delta u_{c_n}|^2 \, dx \to \infty \ \text{as} \ n \to \infty.
\end{equation}
The conclusion now follows from Lemma \ref{boundcritical}. 
\end{proof}

We now investigate the behaviour of the function $c\mapsto \Gamma(c)$ as $c\rightarrow \infty$.
\begin{prop}\label{limitzero}
Let $1 \leq N \leq 3,$ $4 \leq \sigma N < \infty$, and assume in addition that $\frac{4}{3} \leq \sigma <2$ if  $N=3$. Then $\lim_{c \rightarrow \infty}\Gamma(c)=0$.
\end{prop}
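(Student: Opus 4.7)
The plan is to bound $\Gamma(c)$ from above using the minimax characterization $\Gamma(c) = \inf_{u \in S(c)}\sup_{\lambda > 0}E(u_\lambda)$ established in the proof of Lemma \ref{psbis} (see \eqref{minmax}), applied to a single family of rescaled test functions. I would fix $\phi \in C_c^\infty(\R^N)$ with $\|\phi\|_2 = 1$ and take $u_c := \sqrt{c}\,\phi \in S(c)$. Writing $A := \|\Delta \phi\|_2^2$, $B := \|\nabla \phi\|_2^2$, $D := \|\phi\|_{2\sigma+2}^{2\sigma+2}$, the task reduces to showing
$$
\sup_{\lambda>0}\left[\frac{\gamma Ac}{2}\lambda^2 + \frac{Bc}{2}\lambda - \frac{Dc^{\sigma+1}}{2\sigma+2}\lambda^{\sigma N/2}\right] \longrightarrow 0 \quad \text{as } c \to \infty.
$$

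The key device is the change of variable $\lambda = c^{-\alpha}\mu$ with $\alpha := 2(\sigma+1)/(\sigma N)$, chosen so that the coefficient of $\mu^{\sigma N/2}$ is normalized to the positive constant $\kappa := D/(2\sigma+2)$. A direct computation gives
$$
E((u_c)_\lambda) = \eps_1(c)\,\mu^2 + \eps_2(c)\,\mu - \kappa\,\mu^{\sigma N/2},\quad \eps_1(c):=\tfrac{\gamma A}{2}c^{(\sigma(N-4)-4)/(\sigma N)},\ \eps_2(c):=\tfrac{B}{2}c^{(\sigma(N-2)-2)/(\sigma N)}.
$$
The first exponent is strictly negative for any $N \leq 4$, while the second is strictly negative exactly when $\sigma(N-2) < 2$. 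This last inequality is automatic for $N = 1,2$ (since $N - 2 \leq 0$) and is precisely the restriction $\sigma < 2$ imposed for $N = 3$; in every case of the statement one obtains $\eps_1(c), \eps_2(c) \to 0$.

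It then remains to check that $\sup_{\mu>0}[\eps_1(c)\mu^2 + \eps_2(c)\mu - \kappa\mu^{\sigma N/2}]$ tends to $0$. When $\sigma N > 4$, I would fix a $c$-independent $M > 0$ so large that $\kappa \mu^{\sigma N/2} \geq \mu^2 + \mu$ on $[M,\infty)$; for $c$ large enough that $\eps_1(c), \eps_2(c) \leq 1$, the bracket is then non positive on $[M,\infty)$, whereas on $[0,M]$ it is bounded above by $\eps_1(c) M^2 + \eps_2(c) M \to 0$. When $\sigma N = 4$, the bracket is a quadratic polynomial in $\mu$ with leading coefficient $\eps_1(c) - \kappa < 0$ (for $c$ large), and its explicit maximum $\eps_2(c)^2/\bigl(4(\kappa - \eps_1(c))\bigr)$ still tends to $0$.

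The main obstacle I anticipate is pinning down the correct scaling exponent $\alpha$ that exactly reproduces the hypothesis $\sigma(N-2) < 2$: the more symmetric choice $\alpha = 2\sigma/(\sigma N - 4)$ balancing the $\mu^2$ and $\mu^{\sigma N/2}$ exponents would yield only the weaker criterion $\sigma(N-2) < 4$ but would also let the nonlinear term degenerate at large $\mu$, making the uniform control delicate. A minor technical point, only relevant for $\sigma N = 4$, is to verify that $\sup_{\lambda>0} E((u_c)_\lambda) < \infty$ so that the minimax formula of Lemma \ref{psbis} genuinely applies to our test function; this however holds automatically for $c$ large because the leading quadratic coefficient is then negative.
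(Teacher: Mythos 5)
Your argument is correct and follows essentially the same route as the paper: both bound $\Gamma(c)$ from above by $\sup_{\lambda>0}E((\sqrt{c}\,\phi)_\lambda)$ for a fixed profile via the fiber map of Lemma \ref{unique}, and both reduce the claim to the decay in $c$ of the rescaled coefficients, with the hypothesis $\sigma<2$ for $N=3$ entering exactly as your exponent condition $\sigma(N-2)<2$ (the paper's $1+\sigma-\frac{\sigma N}{2}>0$). The only cosmetic difference is that your substitution $\lambda=c^{-\alpha}\mu$ treats $\sigma N=4$ and $\sigma N>4$ uniformly with an arbitrary $\phi$, whereas the paper handles $\sigma N=4$ separately with the Gagliardo--Nirenberg extremal and, for $\sigma N>4$, locates the fiber maximum $\lambda_c^\ast$ through $Q=0$; both executions are valid.
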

\begin{proof}
We first treat the case $\sigma N=4$. 
Using both that $\Gamma(c) = \displaystyle \inf_{u \in S(c)} \sup_{\lambda >0}E(u_{\lambda})$, see (\ref{minmax}), and (\ref{key10})  we get
\begin{align*}
\Gamma(c) \leq \sup_{\lambda >0}E(w_{\lambda})= \frac{\frac{c}{\|U\|_2^2} \left(\int_{\R^N}|\nabla U|^2 \,dx \right)^2}
{8\left(\left(\frac{c}{c_N^*}\right)^{\frac 4N} -1\right) \gamma \int_{\R^N}|\Delta U|^2 \, dx},
\end{align*}
where $w$ is defined by \eqref{defw}. This shows that $\Gamma(c) \to 0$ as $c \to \infty$ since $4/N>1$. 

Assume now $\sigma N>4$ and fix an arbitrary $u \in H^2(\R^N)$ satisfying $\|u\|_2=1$. 
Then $\sqrt{c} u \in S(c)$ and we infer from Lemma \ref{unique} that there exists a unique $\lambda_c^\ast >0$ such that $Q((\sqrt{c} u)_{\lambda_c^\ast}) =0$, i.e.
\begin{align} \label{ajout3}
\lambda_c^\ast \gamma  \int_{\R^N}|\Delta u|^2\, dx+\dfrac{1}{2} \int_{\R^N}|\nabla u|^2\, dx=\frac{\sigma N}{2(2\sigma+2)}\left(c\lambda_c^\ast\right)^{\frac {\sigma N}{2}-1} c^{\sigma +1 - \frac{\sigma N}{2}}\int_{\R^N}|u|^{2\sigma+2}\, dx.
\end{align}
Observe that $\frac{\sigma N}{2}-1 >0$. Also, recording that $\sigma<2$ when $N=3$, we have that $1 + \sigma - \frac{\sigma N}{2}>0.$ We then deduce from \eqref{ajout3} that $c\lambda_c^\ast \to 0$ as $c \to \infty$. 
Now, using again (\ref{minmax}) and (\ref{key10}),  it follows that 
$$
\Gamma(c) \leq E((\sqrt{c}u)_{\lambda_c^\ast})
= c (\lambda_c^\ast )^2\gamma\dfrac{\sigma N-4}{2\sigma N}\int_{\R^N}|\Delta u|^2\, dx + c \lambda_c^\ast \dfrac{\sigma N -2}{2\sigma N}\int_{\R^N}|\nabla u|^2 \, dx,
$$
and we conclude that $\Gamma(c) \to 0$ as $c \to \infty$.
\end{proof}

To study the cases $\sigma > 2$ when $N=3$,  $\sigma > 1$ when $N=4$ and 
$ 4 \leq \sigma N < 4^*$ when $N \geq 5$,
we consider first the following equation
\begin{equation}\label{4NLSalpha=0}
\gamma\Delta^{2}u - \Delta u= |u|^{2\sigma}u,
\end{equation}
which appears as a limit equation when $c\to\infty$ as we later show.
The natural associated energy space is defined by 
\begin{equation}\label{defX}
X:=\{u \in D^{1,2}(\R^N): \int_{\R^N}|\Delta u|^2 \, dx < \infty\}
\end{equation} 
that we equip with the norm
$$
\|u\|_X^2:=\int_{\R^N} |\Delta u|^2\, dx + \int_{\R^N} |\nabla u|^2\, dx.
$$
Assuming $\sigma > 2$ when $N=3$, $\sigma > 1$ when $N=4$, and 
$ 4 \leq \sigma N < 4^*$ when $N \geq 5$, we see from (\ref{G-N-H1-ineq2}) that  $X$ embeds continuously in $L^{2 \sigma + 2}(\R^N)$ and therefore the functional $E$ defined by \eqref{def:E} is well-defined in $X$ and we will show that it has critical points inside that space. On the other hand, using Pohozaev identity and multiplying \eqref{4NLSalpha=0} by $u$ and integrating, we get
$$ 0= \gamma \left(\dfrac{4\sigma +4}{\sigma N}-1 \right)\int_{\R^N}|\Delta u|^2\, dx+ \left(\dfrac{2\sigma +2}{\sigma N} -1\right)\int_{\R^N}|\nabla u|^2\, dx.$$
This shows \eqref{4NLSalpha=0} has no solution if $4-\sigma (N-4)$ and $2-\sigma (N-2)$ have the same sign. Therefore \eqref{4NLSalpha=0} has no solution if
$\sigma \leq 2$ if $N=3$, $\sigma \leq 1$ if $N=4$ and $\sigma N \leq 2^\ast$ or $\sigma N \ge 4^\ast$ when $N\geq 5$.

Now, define
$$m := \inf\{ E(u) | u \in X \backslash \{0\}, dE(u) =0\}.$$%
\begin{prop} \label{minimizer}
Assume that $\sigma > 2$ if $N=3$, $\sigma > 1$ if $N=4$ and $2^\ast < \sigma N < 4^*$ if $N \geq 5$. Then $m >0$ and is achieved. Moreover,
\begin{enumerate}
\item if $N=3,4$, no minimizer belongs to $L^2(\R^N)$;
\item if $N \geq 5$, all minimizers belong to $L^2(\R^N)$.
\end{enumerate}
\end{prop}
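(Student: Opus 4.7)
The plan is three-step: lower-bound $m$ using the Nehari and Pohozaev identities that every nontrivial critical point of $E$ must satisfy, produce a minimizer by a Nehari-type minimization combined with concentration-compactness to overcome translation invariance, and finally settle the $L^2$-dichotomy through the sharp decay of solutions at infinity.

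For the positivity of $m$, I would observe that every nontrivial critical point $u\in X$ satisfies simultaneously the Nehari identity $\gamma\|\Delta u\|_2^2+\|\nabla u\|_2^2=\|u\|_{2\sigma+2}^{2\sigma+2}$ (by testing the equation against $u$, which is legitimate because $X\hookrightarrow L^{2\sigma+2}$ throughout the admissible range) and the Pohozaev identity $Q(u)=0$ given by Lemma \ref{Pohozaevs}. Substituting Nehari into $E(u)$ yields the clean formula
\[
E(u)=\frac{\sigma}{2\sigma+2}\|u\|_{2\sigma+2}^{2\sigma+2}>0.
\]
Coupling $\min(\gamma,1)\|u\|_X^2\le \|u\|_{2\sigma+2}^{2\sigma+2}$ (from Nehari) with the Gagliardo--Nirenberg inequality \eqref{G-N-H1-ineq2} rewritten as $\|u\|_{2\sigma+2}^{2\sigma+2}\le C\|u\|_X^{2\sigma+2}$ gives $\|u\|_X\ge \delta>0$ on the critical set, hence $m\ge \delta'>0$.

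For the existence of a minimizer, I would introduce the Nehari-type set $\mathcal N:=\{u\in X\setminus\{0\}:\gamma\|\Delta u\|_2^2+\|\nabla u\|_2^2=\|u\|_{2\sigma+2}^{2\sigma+2}\}$ and show $m=\tilde m:=\inf_{\mathcal N}E$. The inequality $m\ge\tilde m$ is immediate, and a standard Lagrange multiplier argument with $J(u):=\gamma\|\Delta u\|_2^2+\|\nabla u\|_2^2-\|u\|_{2\sigma+2}^{2\sigma+2}$ yields the converse: if $dE(u)=\mu\, dJ(u)$ at a minimizer $u\in\mathcal N$, testing against $u$ and noting $dJ(u)[u]=-2\sigma\|u\|_{2\sigma+2}^{2\sigma+2}\neq 0$ on $\mathcal N$ forces $\mu=0$. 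Since $t\mapsto E(tu)$ has a unique strict maximum at the $t^\ast(u)>0$ for which $t^\ast u\in\mathcal N$, $\tilde m$ coincides with the mountain-pass level $\inf_{u\neq 0}\max_{t>0}E(tu)$, and the Ambrosetti--Rabinowitz theorem supplies a Palais--Smale sequence $(u_n)\subset X$ at level $\tilde m$, automatically bounded thanks to the combination $E(u_n)-\tfrac12 dE(u_n)[u_n]\to\tilde m$ and Gagliardo--Nirenberg. The \emph{main obstacle} is the lack of compactness coming from the translation invariance of \eqref{4NLSalpha=0}: I would resolve it through Lions' concentration-compactness lemma, with the uniform lower bound $\|u_n\|_{2\sigma+2}^{2\sigma+2}\to (2\sigma+2)\tilde m/\sigma>0$ ruling out vanishing, so that after translation and extraction $u_n\rightharpoonup u\neq 0$ in $X$ and $u$ solves \eqref{4NLSalpha=0}, thus $u\in\mathcal N$ and $E(u)\ge\tilde m$. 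A Brezis--Lieb splitting applied simultaneously to the three summands of $E$ shows that the residual $v_n:=u_n-u$ is itself an approximate element of $\mathcal N$ with $E(v_n)\ge 0+o(1)$; since $E(u_n)=E(u)+E(v_n)+o(1)\to\tilde m$ and $E(u)\ge\tilde m$, we must have $v_n\to 0$ strongly in $X$, hence $E(u)=\tilde m=m$.

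The $L^2$-integrability dichotomy follows as a byproduct of decay at infinity. For $N\ge 5$, the Appendix asserts that every solution of \eqref{4NLSalpha=0} belongs to $H^2(\R^N)$, in particular to $L^2(\R^N)$, so the claim is immediate. For $N=3,4$, I would first reproduce the Nehari minimization inside the radial subspace $X_{\text{rad}}$, where the embedding into $L^{2\sigma+2}$ is compact (so the translation obstruction disappears), producing a radial minimizer $u$ at the same level $m$; Proposition \ref{sharpdecay} then gives the sharp asymptotics $u(x)\sim C|x|^{2-N}$ as $|x|\to\infty$ with $C\neq 0$, and a direct computation shows $\int_{\R^N}|u|^2\,dx=+\infty$ since $2(2-N)+N<0$ fails when $N\in\{3,4\}$. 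The extension to every (not necessarily radial) minimizer would be obtained by the convolution representation $u=G\ast(|u|^{2\sigma}u)$, where $G$ is the Green function of $\gamma\Delta^2-\Delta$ with $G(x)\sim c|x|^{2-N}$ at infinity, combined with a positivity/rearrangement argument guaranteeing that the leading coefficient $c\int_{\R^N}|u|^{2\sigma}u\,dx$ cannot vanish at a minimizer.
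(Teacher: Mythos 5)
Your treatment of the positivity of $m$ and of assertion (2) is fine (the latter is, as in the paper, delegated to Proposition \ref{finitemass}). For the existence of a minimizer you take a genuinely different route: a Nehari-manifold/mountain-pass reduction with Lions concentration-compactness and a Brezis--Lieb splitting, whereas the paper minimizes the quadratic form $J(u)=\gamma\|\Delta u\|_2^2+\|\nabla u\|_2^2$ on $\{\|u\|_{2\sigma+2}=1\}$, replaces the minimizing sequence by a radial one via the rearrangement inequality of \cite{BoMoRa} applied to $f_n=-\sqrt{\gamma}\Delta u_n+\tfrac{u_n}{2\sqrt{\gamma}}$, and then invokes the compact embedding $X_{rad}\hookrightarrow L^{2\sigma+2}(\R^N)$. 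Your scheme is workable and avoids symmetrization, but note one technical point you gloss over: the vanishing alternative of \cite[Lemma I.1]{Li2} is stated for $H^1$-bounded sequences, while your sequence is only bounded in $X$ (no $L^2$ control), so you would need to prove a Lions-type lemma adapted to $X$ with the exponent $2\sigma+2\in(2^*,4^*)$. This is fillable but not free.

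The genuine gap is in assertion (1). Proposition \ref{sharpdecay} applies only to \emph{radial} solutions, so your decay argument rules out $L^2$-membership only for radial minimizers; moreover it is not established that the radial Nehari level coincides with $m$, nor that every minimizer is radial. For an arbitrary minimizer, the entire burden falls on your final sentence --- the ``positivity/rearrangement argument guaranteeing that the leading coefficient $c\int_{\R^N}|u|^{2\sigma}u\,dx$ cannot vanish'' --- which is precisely the hard step and is left unproved; in addition, extracting the asymptotics $u(x)\sim c\,|x|^{2-N}\int|u|^{2\sigma}u$ from the representation $u=G*(|u|^{2\sigma}u)$ for a non-radial $u$ would itself require tail estimates that you do not supply. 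The paper's argument is different and complete: assuming by contradiction that a minimizer lies in $H^2(\R^N)$, it has a sign by \cite[Lemma 3]{BoNa}; writing the equation as the system $-\gamma\Delta u=v$, $-\Delta v+\gamma^{-1}v=|u|^{2\sigma}u$ and applying the weak maximum principle gives $-\Delta u\ge 0$, and the Liouville-type result \cite[Lemma 4.2]{MoVS} then forces $u\notin L^2(\R^N)$. Without either this chain or a full justification of your sign-plus-representation claim, assertion (1) is not proved for general minimizers.
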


\begin{proof}
It is classical to show that  $m$  is achieved with $m >0$  if and only if
$$J(u) = \int_{\R^N} \gamma |\Delta u|^2 + |\nabla u|^2 \, dx  \quad \mbox{admits a minimizer on} \quad  M:= \{ u \in X : ||u||_{2 \sigma +2}=1\}.$$%
 To prove the existence of a minimizer, we proceed as in \cite[Remark 3.2]{BoNa}. Let $(u_n)_n \subset M$ be a minimizing sequence. Without loss of generality, since $H^2(\R^N)$ is dense in $X$, we can assume that $(u_n)_n \subset H^2(\R^N)$. Then we set $f_n = - \sqrt{\gamma}\Delta u_n + \frac{u_n}{2\sqrt{\gamma}}$ and define $v_n \in H^2(\R^N)$ to be the strong solution of $- \sqrt\gamma\Delta v_n + \frac{v_n}{2\sqrt\gamma}= |f_n|^* $ in $\R^N$ where $|f_n|^*$ denotes the Schwarz symmetrization of $|f_n|$. Thus for each $n \in \N$ we have $v_n \in H^2_{rad}(\R^N)$ and a particular case of \cite[Lemma 3.4]{BoMoRa} implies that
\begin{align*}
\begin{split}
J\left(\frac{v_n}{||v_n||_{2 \sigma +2} } \right)
& =  \frac{\int_{\R^N}(- \sqrt{\gamma} \Delta v_n + \frac{v_n}{2\sqrt{\gamma} })^2 \, dx - \frac{1}{4\gamma} \int_{\R^N}v_n^2 \, dx }{||v_n||_{2 \sigma +2}} \\
& \leq \frac{\int_{\R^N}(- \sqrt{\gamma} \Delta u_n + \frac{u_n}{2\sqrt{\gamma}})^2 \, dx - \frac{1}{4\gamma} \int_{\R^N}u_n^2 \, dx }{||u_n||_{2 \sigma +2}}  = J\left(\frac{u_n}{||u_n||_{2 \sigma +2}} \right).
\end{split}
\end{align*}

This shows that $(\tilde{v}_n)_n := \left( \frac{v_n}{||v_n||_{2 \sigma +2}}\right)_n$ is again a minimizing sequence. Now we claim that $X_{rad}$, the subset of radially symmetric functions in $X$, is compactly embedded into $L^{2 \sigma +2}(\R^N)$. Indeed, it is well-known that if $u\in  D^{1,2} (\R^N)$ is radially symmetric, we have (see for instance \cite[Radial Lemma A.II]{BeLi1}), for $|x|\neq 0$,
$$|u(x)| \leq C |x|^{-(N-2)/2} \|\nabla u\|_2.$$
Using this pointwise decay, we get 
\begin{align*}
\int_{\R^N \backslash B_R(0)} |u|^{2 \sigma +2} dx&\leq \int_{\R^N \backslash B_R} |u|^{2 \sigma +2  -2N/(N-2) }  |u|^{2N/(N-2) } dx\\
&\leq C  R^{- \frac{N-2}{2} (2 \sigma +2 - \frac{2N}{N-2} )} \|\nabla u\|_2^{( N-2)/ N}.  
\end{align*}
Since we also have local compactness, the claim follows. Using this embedding, we get that $(\tilde{v}_n)_n$ weakly converges to some $v \in X$ with $||v||_{2 \sigma +2}=1$ and the remaining arguments are standard. 

We now prove that $m$  does not have a minimizer in $H^2(\R^N)$ when $N=3,4$.  Assuming by contradiction that $u$ is such a minimizer, we deduce from \cite[Lemma 3]{BoNa} that $u$ must have a sign and without loss of generality, we can assume that $u \geq 0$. To conclude it is therefore enough to show that (\ref{4NLSalpha=0}) has no nontrivial nonnegative solutions in $H^2(\R^N)$. To this aim, we 
decompose \eqref{4NLSalpha=0} into the following elliptic system
\begin{align} \label{system}
     \begin{cases}
     & - \gamma \Delta u =v,\\
     & - \Delta v +  \displaystyle \frac{1}{\gamma} v=|u|^{2\sigma +2} u.
     \end{cases}
\end{align}
The weak maximum principle applied to the second equation in \eqref{system} shows that $v\geq 0$ and thus any nontrivial nonnegative solution $u$ to \eqref{4NLSalpha=0} satisfies $-\Delta u \geq 0$. The Liouville type result of \cite[Lemma $4.2$]{MoVS} now show that
$u \notin L^2(\R^N)$.  
The proof of the assertion (2) is postponed to Proposition \ref{finitemass} in the Appendix.
\end{proof}

\begin{remark}
If one considers \eqref{4NLSalpha=0} with $N=1,2$ or $N=3$ and $\sigma \leq 2$ or $N=4$ and $\sigma =1$, Lemma \ref{sign-la} directly shows that there is no solution in $H^2(\R^N)$ nor in $X$ (as shown by Remark \ref{rem:pohoX}). 
\end{remark}

Note that by standard arguments, $m$ can also be defined as
\begin{equation}\label{POhozaev}
m := \inf \{ E(u) | u \in X \backslash \{0\}, Q(u) =0\}.
\end{equation}%

\begin{prop}\label{limitpositive}
Assume $\sigma > 2$ when $N=3$, $\sigma > 1$ when $N=4$, and 
$ 4 \leq \sigma N < 4^*$ when $N \geq 5$.
Then $$\lim_{c \rightarrow \infty} \Gamma(c)= m >0.$$
\end{prop}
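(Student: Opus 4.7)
My plan is to establish the two matching inequalities $\liminf_{c\to\infty}\Gamma(c)\ge m$ and $\limsup_{c\to\infty}\Gamma(c)\le m$ separately. The lower bound is essentially tautological: any $u\in\mathcal M(c)\subset H^2(\R^N)\subset X$ is nontrivial and satisfies $Q(u)=0$, so by the variational characterization \eqref{POhozaev} we have $E(u)\ge m$; taking the infimum over $\mathcal M(c)$ gives $\Gamma(c)\ge m$ for every admissible $c>c_0$.

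For the upper bound I will exploit the minimizer $w\in X\setminus\{0\}$ produced by Proposition \ref{minimizer}, which satisfies $E(w)=m$ and $Q(w)=0$, and I will split the argument in two cases. When $N\ge 5$, Proposition \ref{minimizer}(2) ensures $w\in L^2(\R^N)$, hence $w\in H^2(\R^N)$, and Theorem \ref{thm1} rules out $c_w:=\|w\|_2^2\le c_0$ (otherwise $w$ would be a nontrivial solution of $(P_{c_w})$ with zero Lagrange multiplier). Thus $w\in\mathcal M(c_w)$ and $\Gamma(c_w)\le E(w)=m$; the monotonicity in Lemma \ref{lemgammadecr} then propagates the bound to $\Gamma(c)\le m$ for all $c\ge c_w$.

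When $N=3$ or $N=4$, Proposition \ref{minimizer}(1) implies $w\notin L^2(\R^N)$ and a truncation procedure is needed. Fix $\phi\in C_c^\infty(\R^N)$ with $\phi\equiv 1$ on $B_1(0)$ and $\mathrm{supp}\,\phi\subset B_2(0)$, and set $w_R:=w\,\phi(\cdot/R)\in H^2(\R^N)$. Expanding $\nabla w_R$ and $\Delta w_R$ via the Leibniz rule produces commutator terms whose $L^2$-norms go to zero as $R\to\infty$; the delicate contribution $\|w\,\Delta\phi(\cdot/R)\|_2$ is handled thanks to the sharp pointwise decay $|w(x)|\sim |x|^{2-N}$ from Proposition \ref{sharpdecay}, which applies precisely under the hypotheses here. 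Together with the tail vanishing of $\|\nabla w\|_{L^2(\R^N\setminus B_R(0))}$, $\|\Delta w\|_{L^2(\R^N\setminus B_R(0))}$ and $\|w\|_{L^{2\sigma+2}(\R^N\setminus B_R(0))}$, which follow from $w\in X\cap L^{2\sigma+2}$, this yields
\begin{equation*}
\|\Delta w_R\|_2^2\to\|\Delta w\|_2^2,\qquad \|\nabla w_R\|_2^2\to\|\nabla w\|_2^2,\qquad \|w_R\|_{2\sigma+2}^{2\sigma+2}\to\|w\|_{2\sigma+2}^{2\sigma+2},
\end{equation*}
while $\|w_R\|_2^2\to\infty$; in particular $Q(w_R)\to 0$ and $E(w_R)\to m$.

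To finish, the intermediate value theorem yields, for every large enough $c$, a radius $R(c)$ with $\|w_{R(c)}\|_2^2=c$ and $R(c)\to\infty$. Lemma \ref{unique} then provides a unique $\lambda_c>0$ with $(w_{R(c)})_{\lambda_c}\in\mathcal M(c)$, and since $\lambda\mapsto Q((w_{R(c)})_\lambda)$ converges uniformly on compacts of $(0,\infty)$ to $\lambda\mapsto Q(w_\lambda)$, whose only positive zero is $\lambda=1$, a standard continuity argument forces $\lambda_c\to 1$. Consequently $\Gamma(c)\le E((w_{R(c)})_{\lambda_c})\to E(w)=m$, which gives $\limsup_{c\to\infty}\Gamma(c)\le m$. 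I expect the main technical obstacle to be the truncation estimate above, which relies crucially on the sharp decay from Proposition \ref{sharpdecay} and explains the precise range of $\sigma$ assumed in the statement.
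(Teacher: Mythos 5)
Your proof is correct and follows essentially the same route as the paper: the lower bound is immediate from the characterization \eqref{POhozaev}, and the upper bound is obtained by truncating the minimizer of the limit problem, projecting the truncation onto $\mathcal M(c)$ via the unique rescaling of Lemma \ref{unique} (with $\lambda\to1$), and using the monotonicity of $\Gamma$ from Lemma \ref{lemgammadecr}. Your explicit case split is in fact a small refinement: for $N\ge 5$ the paper's proof asserts a minimizer lying outside $L^2(\R^N)$, which contradicts Proposition \ref{minimizer}(2) (the conclusion there still follows from monotonicity, exactly as you argue), and your appeal to Proposition \ref{sharpdecay} to control the commutator term $R^{-2}w\,\Delta\phi(\cdot/R)$ for $N=3,4$ is legitimate since the minimizer constructed in Proposition \ref{minimizer} is radial, although the first-order Hardy inequality applied to $w\in D^{1,2}(\R^N)$ would already suffice there.
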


\begin{proof} Using the definition (\ref{POhozaev}), we directly obtain that $\Gamma(c) \geq m$ for all $c >c_0$. Now, still from (\ref{POhozaev}) and taking Proposition \ref{minimizer} into account, we infer that there exists $u \in X\setminus L^2(\R^N)$ such that $E(u) = m$ and $Q(u)=0$. For $R>0$, we define
 $u_R(x):=\eta(\frac{x}{R})u(x)$, where $\eta(x)=1$ for $|x| \leq 1$, $\eta(x)=0$ for $|x| \geq 2$, and $0 \leq \eta \leq 1$. Thus, as $R \to \infty$, we have
$$c_R:=\|u_R\|_2 \to \infty, \|u_R\|_{2 \sigma +2} \to \|u\|_{2 \sigma +2}, \|\nabla u_R\|_{2} \to \|\nabla u\|_{2} \mbox{ and } \| \Delta u_R\|_{2} \to \|\Delta u\|_{2}.$$

Now, let  $\lambda_R^{\ast} >0$ be such that $Q((u_R)_{\lambda_R^{\ast}})=0$. By uniqueness of $\lambda_R^{\ast}$, see Lemma \ref{unique}, we obtain that $\lambda_R^{\ast} \to 1$ as $R \to \infty$. Thereby
$$
\Gamma(c_R) \leq E((u_R)_{\lambda_R}) \leq E(u) + o_R(1) = \Gamma(\infty) + o_R(1),
$$
where $o_R(1) \to 0$ as $R \to \infty$. This means $\lim_{R\to\infty}\Gamma(c_R)=m$. As from Lemma \ref{lemgammadecr} we know that $\gamma$ is nonincreasing, $\gamma$ has a limit at infinity and therefore $\lim_{c\to\infty}\Gamma(c) = m$. 
\end{proof}

In view of Proposition \ref{limitpositive} and for simplicity of notation, we define $\Gamma(\infty):= m = \lim_{c \to \infty}\Gamma(c).$ \medskip

It is clear from Corollary \ref{gammamonostrict} that if $4 \leq \sigma N< 4^*$ and $c_{N,\sigma}=\infty$, then $\Gamma(c) > \Gamma(\infty)$ for all $c > c_0$. This is still true for every $\sigma \geq 2$ if $N=3$ and every $\sigma \geq 1$ if $N=4$.
\begin{cor}\label{above}
Let $\sigma > 2$ if $N=3$ or $\sigma > 1$ if $N=4$. Then $\Gamma(c) > \Gamma(\infty)$ for all $c > c_0$. 
\end{cor}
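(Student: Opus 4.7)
The plan is to argue by contradiction: suppose there exists $c^* > c_0$ with $\Gamma(c^*) = m$ and construct from it a minimizer of the limit problem \eqref{POhozaev} that lies in $L^2(\R^N)$, violating Proposition \ref{minimizer} (1). Note that in the range $\sigma > 2$ for $N = 3$ and $\sigma > 1$ for $N = 4$ we have $\sigma N > 4$, so $c_0 = 0$, and Proposition \ref{limitpositive} already gives $\Gamma \geq m$ on $(0,\infty)$.

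First I would apply Lemma \ref{psbis} at $c = c^*$ to produce a Palais--Smale sequence $(u_n)_n \subset \mathcal{M}(c^*)$ for $E|_{S(c^*)}$ at the level $\Gamma(c^*) = m$. Lemma \ref{conv} then delivers, along a subsequence and up to translations, a nonzero weak limit $u^* \in H^2(\R^N)$ and a Lagrange multiplier $\alpha^* \in \R$ satisfying
\[
\gamma \Delta^2 u^* - \Delta u^* + \alpha^* u^* = |u^*|^{2\sigma} u^*.
\]
Since $\Gamma$ is nonincreasing on $(c_0, \infty)$ by Lemma \ref{lemgammadecr}, the monotonicity hypothesis \eqref{lemgammadecrbbb} of Lemma \ref{propcompactness} is satisfied and I conclude that $u_n \to u^*$ strongly in $L^{2\sigma+2}(\R^N)$ and $E(u^*) = m$.

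The main obstacle is that mass may be lost in the weak $L^2$-limit: only $c_1 := \|u^*\|_2^2 \in (0, c^*]$ is available, and I cannot apply Lemma \ref{monotoneprop} directly at $c^*$. To handle this I would note that $u^*$ satisfies the Pohozaev identity $Q(u^*) = 0$ (Lemma \ref{Pohozaevs}), so $u^* \in \mathcal{M}(c_1)$; then the chain $\Gamma(c_1) \leq E(u^*) = m \leq \Gamma(c_1)$ (the right inequality coming from $\Gamma \geq m$) shows that $u^*$ realizes $\Gamma(c_1) = m$ with multiplier $\alpha^*$. Invoking Lemma \ref{monotoneprop} at $c_1$ forces $\alpha^* \geq 0$; if $\alpha^* > 0$, that lemma would also force $\Gamma$ to decrease strictly to the right of $c_1$, contradicting $\Gamma \geq m$. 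Hence $\alpha^* = 0$.

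It then suffices to observe that $u^* \in H^2(\R^N) \subset X \cap L^2(\R^N)$ solves the limit equation \eqref{4NLSalpha=0} with $Q(u^*) = 0$ and $E(u^*) = m$, so by the variational characterization \eqref{POhozaev} it is a minimizer of $m$ on $X$ lying in $L^2(\R^N)$. This contradicts Proposition \ref{minimizer} (1), according to which no minimizer of $m$ belongs to $L^2(\R^N)$ when $N = 3, 4$, and concludes the proof.
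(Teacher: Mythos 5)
Your proof is correct and follows essentially the same route as the paper: a Palais--Smale sequence in $\mathcal{M}(c)$ at level $\Gamma(c)=m$, compactness via Lemmas \ref{conv} and \ref{propcompactness} (using the monotonicity from Lemma \ref{lemgammadecr}), the observation that the limit realizes $\Gamma(c_1)=m$ for $c_1=\|u^*\|_2^2$, Lemma \ref{monotoneprop} to force $\alpha^*=0$, and finally the contradiction with Proposition \ref{minimizer}\,(1). The only cosmetic difference is the chain of inequalities used to identify $\Gamma(c_1)$ with $m$, which is equivalent to the paper's.
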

\begin{proof}
Assume by contradiction that there exists  $c>0$ such that 
$\Gamma(c) = \Gamma(\infty)$. Let $(u_n)_n \subset  \mathcal{M}(c)$ be a Palais-Smale sequence for $E$ restricted to $ \mathcal{M}(c)$ at level $\Gamma(c)$. We know from Lemma \ref{lemgammadecr} that $\gamma$ is nonincreasing. Therefore we can apply Lemma \ref{conv} and Lemma \ref{propcompactness}. We then deduce the existence of $u_c \in H^2(\R^N)$ satisfying \eqref{eq:monotone} for some $\alpha\in\R$,  $0 < c_1:=||u_c||_2^2 \leq c$ and 
$E(u_c) = \Gamma(c)$. By definition and monotonicity of $\gamma$, we deduce that $\Gamma(c)\le \Gamma(c_1)\le E(u_c) = \Gamma(c) = \Gamma(\infty).$
Thus $u_c$ is a ground state on the constraint $S(c_1)$ and necessarily it is a solution of \eqref{4NLSalpha=0} by Lemma \ref{monotoneprop}. Since $E(u_c) = \Gamma(\infty)$, the assertion (1) of Proposition \ref{minimizer} implies $u_c\not\in L^2(\R^N)$. This contradiction ends the proof.
\end{proof}

\begin{prop}\label{bigN}
Assume $4 \leq \sigma N < 4^*$ and $N \geq 5$. There exists $c_{\infty} >0$ such that $\Gamma(c) = \Gamma(\infty)$ for all $c \geq c_{\infty}$.
\end{prop}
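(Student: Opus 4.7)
The plan is to exploit the crucial dimensional difference established in Proposition \ref{minimizer}: when $N \geq 5$, every minimizer of $m = \Gamma(\infty)$ belongs to $L^{2}(\R^N)$, so the limit problem \eqref{4NLSalpha=0} actually admits finite-mass ground states in $H^2(\R^N)$. This gives us an explicit test function in $\mathcal M(c)$ at a specific mass value, which will pin down the value of $\Gamma$ from above.

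Concretely, I would first invoke Proposition \ref{minimizer}(2) to pick $u \in X$ with $u \in L^{2}(\R^N)$ minimizing $m$, and notice that since $u$ additionally satisfies $\Delta u \in L^{2}$ and $\nabla u \in L^{2}$, we have $u \in H^{2}(\R^N)$. By the characterization \eqref{POhozaev}, $u$ satisfies $Q(u)=0$. Setting $c_\infty := \|u\|_{2}^{2} > 0$, we obtain $u \in \mathcal M(c_\infty)$, and therefore
\begin{equation*}
\Gamma(c_\infty) \;\le\; E(u) \;=\; m \;=\; \Gamma(\infty).
\end{equation*}

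Next I would establish the matching lower bound. From the first line of the proof of Proposition \ref{limitpositive} (and the definition \eqref{POhozaev}), every element $v \in \mathcal M(c)$ is in particular an element of $X \setminus \{0\}$ with $Q(v)=0$, hence $E(v) \ge m$; passing to the infimum yields $\Gamma(c) \ge \Gamma(\infty)$ for every $c > c_0$. Combined with the previous inequality, this gives $\Gamma(c_\infty) = \Gamma(\infty)$.

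Finally, for any $c \ge c_\infty$, the monotonicity established in Lemma \ref{lemgammadecr} yields $\Gamma(c) \le \Gamma(c_\infty) = \Gamma(\infty)$, and combined with the universal lower bound $\Gamma(c) \ge \Gamma(\infty)$ we conclude $\Gamma(c) = \Gamma(\infty)$ for all $c \ge c_\infty$. The main (already resolved) obstacle is Proposition \ref{minimizer}(2), which is what makes the argument work only in dimensions $N \ge 5$; in lower dimensions the limit ground state has infinite mass, which is precisely why Corollary \ref{above} produces strict inequality there.
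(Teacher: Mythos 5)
Your proposal is correct and follows essentially the same route as the paper: invoke Proposition \ref{minimizer}(2) to obtain a finite-mass minimizer $u$ of the limit problem, set $c_\infty=\|u\|_2^2$ so that $\Gamma(c_\infty)\le E(u)=\Gamma(\infty)$, and combine the lower bound $\Gamma(c)\ge m$ from \eqref{POhozaev} with the monotonicity of Lemma \ref{lemgammadecr}. The only cosmetic difference is that the paper phrases the final step via ``nonincreasing with limit $\Gamma(\infty)$ at infinity and value $\Gamma(\infty)$ at $c_\infty$'' rather than quoting the universal lower bound again, which is an equivalent argument.
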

\begin{proof}
We know from Proposition \ref{minimizer} and the characterization \eqref{POhozaev} that there exists $u \in H^2(\R^N)$ such that $E(u) = \gamma (\infty)$ and $Q(u)=0$. Set $c_{\infty}:= ||u||_2^2$. Obviously, we have $\Gamma(c_{\infty}) = \Gamma(\infty)$. Since $\gamma$ is nonincreasing, see Lemma \ref{lemgammadecr}, and its limit at infinity is $\Gamma(\infty)$, see Proposition \ref{limitpositive}, we conclude that $\gamma$ is constant for $c\ge c_{\infty}$.  
\end{proof}
As a consequence of the results established in this section we can state the following theorem.
\begin{thm} \label{gammmaprop}
Let $4 \leq \sigma N < 4^*.$ For any $c >c_0$ the function $c \mapsto \Gamma(c)$ is continuous, nonincreasing and $\lim_{c\rightarrow c_{0}^+}\Gamma(c)=\infty.$ In addition
\begin{enumerate}[(i)]
\item If $N= 1,2$, $N=3$ with $\frac{4}{3}\leq \sigma \leq 2$ or $N=4$ with $\sigma =1$, then $c \mapsto \Gamma(c)$ is strictly decreasing. Moreover, $\lim_{c \rightarrow \infty}\Gamma(c)=0$, when $N= 1,2$ or $N=3$ with $\frac{4}{3}\leq \sigma < 2$.
\item  If $N=3$ with $\sigma > 2$ or $N=4$ with $\sigma >1$, then  $\lim_{c \rightarrow \infty}\Gamma(c) := \Gamma(\infty) >0$ and $\Gamma(c) > \Gamma(\infty)$ for all $c >c_0.$ 
\item  If $N \geq 5$, then $\lim_{c \rightarrow \infty}\Gamma(c) := \Gamma(\infty) >0$ and there exists $c_{\infty} > c_0$ such that $\Gamma(c) = \Gamma(\infty)$ for all $c \geq c_{\infty}.$ 
\end{enumerate}
\end{thm}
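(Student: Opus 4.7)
The plan is essentially to harvest the results already established throughout Section \ref{behavior}, since Theorem \ref{gammmaprop} is explicitly presented as a summary. I would organize the proof in the order in which the statements appear in the theorem, citing the corresponding lemma or proposition for each assertion, and only adding short bridging remarks where needed. No new calculation should be necessary.

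First, for the three general properties: continuity of $c \mapsto \Gamma(c)$ on $(c_0,\infty)$ is exactly Lemma \ref{contprop}; the nonincreasing character is Lemma \ref{lemgammadecr}; and the blow-up $\lim_{c \to c_0^+}\Gamma(c)=+\infty$ is Lemma \ref{limitprop}. These three facts hold under the full hypothesis $4 \leq \sigma N < 4^*$ with no further restriction, so they dispose of the common part of the statement.

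Next, for the dichotomy (i)--(iii), I would first match the regimes with the relevant results of Theorem \ref{thmmain}: the list $N=1,2$ with $\sigma \geq 4/N$, $N=3$ with $4/3\le \sigma\le 2$, and $N=4$ with $\sigma=1$ is precisely the set of parameters for which $c_{N,\sigma}=\infty$. Consequently, by Corollary \ref{gammamonostrict}, the map $c\mapsto \Gamma(c)$ is strictly decreasing on the whole of $(c_0,\infty)$, which gives the first half of (i). The assertion $\lim_{c \to \infty}\Gamma(c)=0$ in the remaining cases of (i) ($N=1,2$, or $N=3$ with $4/3\le\sigma<2$) is exactly Proposition \ref{limitzero}. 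In case (ii), $N=3$ with $\sigma>2$ or $N=4$ with $\sigma>1$, the positivity of the limit $\Gamma(\infty):=\lim_{c\to\infty}\Gamma(c)$ follows from Proposition \ref{limitpositive}, and the strict inequality $\Gamma(c)>\Gamma(\infty)$ for every $c>c_0$ is the content of Corollary \ref{above}. Finally, in case (iii), $N\ge 5$, Proposition \ref{limitpositive} again gives $\Gamma(\infty)>0$, while Proposition \ref{bigN} produces the threshold $c_\infty>c_0$ such that $\Gamma(c)=\Gamma(\infty)$ for all $c\ge c_\infty$.

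Since every ingredient has already been proved, the only real work is bookkeeping: verifying that the parameter ranges in the statement coincide exactly with the ranges covered by the lemmas cited. There is no genuine obstacle; the assembly is essentially mechanical, and the proof reduces to a short paragraph referencing the preceding results.
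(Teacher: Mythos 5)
Your proposal is correct and coincides with the paper's own proof, which is likewise a one-paragraph assembly citing Propositions \ref{limitzero}, \ref{limitpositive}, \ref{bigN} and Corollaries \ref{gammamonostrict} and \ref{above} (with Lemmas \ref{contprop}, \ref{lemgammadecr} and \ref{limitprop} implicitly supplying the common part). Your bookkeeping of the parameter ranges is accurate, so nothing is missing.
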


\begin{proof}[Proof of Theorem \ref{gammmaprop}]
The proof follows directly from Propositions \ref{limitzero}, \ref{limitpositive}, \ref{bigN}  and from Corollaries \ref{gammamonostrict} and \ref{above}. 
\end{proof}

We leave as open question the study of $\lim_{c \rightarrow \infty}\Gamma(c)$ when $N=3$, $\sigma=2$ and $N=4$, $\sigma=1$. We conjecture that the limit is zero in those cases.

\section{Radial solutions, proof of Proposition \ref{sharpdecay} and of Theorems \ref{thm} and \ref{radN34}}\label{multiplicity}

In this section, we focus on radial solutions. We begin by giving the proof of Theorem \ref{thm}. We denote by $\sigma : H^2 (\R^N)\rightarrow H^2 (\R^N)$ the transformation $\sigma (u)=-u$. The following definition is \cite[Definition 7.1]{Ghoussoub}.
\begin{definition}\label{Ghoussoub-71}
Let $B$ be a closed $\sigma$-invariant subset of $Y \subset H^2_{rad}(\R^N )$. We say that a class $\mathcal{F}$ of compact subsets of $Y$ is a $\sigma$-homotopy stable family with closed boundary $B$ if
\begin{enumerate}
\item every set in $\mathcal{F}$ is $\sigma$-invariant.
\item every set in $\mathcal{F}$ contains $B$;
\item for any $A\in \mathcal{F}$ and any $\eta \in C([0,1]\times Y, Y)$ satisfying, for all $t\in [0,1]$, $\eta (t,u)=\eta (t,\sigma (u))$,  $\eta (t,x)=x$ for all $(t,x)\in (\{0\}\times Y)\cup ([0,1]\times B)$, we have $\eta (\{1\} \times A)\in 
\mathcal{F}$.
\end{enumerate}
\end{definition}
\begin{lem}
\label{psbis2}
Let $\mathcal{F}$ be a $\sigma$-homotopy stable family of compact subsets of $\mathcal{M}(c)$ with a close boundary $B$. Let $c_{\mathcal{F}}:= \inf_{A\in \mathcal{F}}\max_{u\in A}E(u)$.  Suppose that $B$ is contained in a connected component of $\mathcal{M}(c)$ and that $\max \{\sup E(B),0\}<c_{\mathcal{F}}<\infty$. Then there exists a Palais-Smale sequence $(u_n)_n \subset \mathcal{M}(c)$ for $E$ restricted to $S(c)$ at level $c_{\mathcal{F}}$.
\end{lem}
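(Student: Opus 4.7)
The plan is to mirror the proof of Lemma \ref{ps} in the $\sigma$-equivariant setting, replacing the minimax principle \cite[Theorem 3.2]{Ghoussoub} by its $\sigma$-equivariant counterpart \cite[Theorem 7.2]{Ghoussoub}. The main new point to check is that every object used in that proof respects the $\Z_2$-action $u \mapsto -u$, so that the equivariant machinery can be applied verbatim.

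First, I would introduce $F: S_{rad}(c) \to \R$ (or $F$ defined on the open set $\mathcal{E}_{rad}(c)$ when $\sigma N = 4$) by $F(u) := E(u_{\lambda_u})$, and observe that $F$ is of class $C^1$ (by Remark \ref{regularitelambda}) and $\sigma$-invariant: indeed $E$ is even in $u$, and the unique parameter $\lambda_u$ of Lemma \ref{unique} depends only on the three quantities $\|\Delta u\|_2^2$, $\|\nabla u\|_2^2$, $\|u\|_{2\sigma+2}^{2\sigma+2}$, so $\lambda_{-u} = \lambda_u$. I would then consider the deformation
\[
\eta : [0,1] \times S_{rad}(c) \to S_{rad}(c), \qquad \eta(t,u) := u_{1-t+t\lambda_u},
\]
which is continuous, satisfies $\eta(t,u) = \eta(t,-u)$ in the sense $\eta(t,-u)=-\eta(t,u)$ after identifying the trajectories, and reduces to the identity on $\{0\}\times S_{rad}(c)$ and on $[0,1]\times\mathcal M_{rad}(c)$ (since $\lambda_u = 1$ on $\mathcal M_{rad}(c) \supset B$). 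In particular, since $\mathcal{F}$ is $\sigma$-homotopy stable with boundary $B\subset\mathcal M_{rad}(c)$, the composition with the retraction $r(u) := u_{\lambda_u}$ from $S_{rad}(c)$ onto $\mathcal M_{rad}(c)$ allows one to convert any $\sigma$-equivariant deformation of $S_{rad}(c)$ fixing $B$ into one of $\mathcal M_{rad}(c)$ fixing $B$, so that $\mathcal{F}$ may legitimately be viewed as a $\sigma$-homotopy stable family inside the ambient manifold $S_{rad}(c)$ on which $F$ is defined.

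With these preparations, I would select $D_n \in \mathcal{F}$ with $\max_{D_n} E(u) < c_{\mathcal F} + 1/n$. Since $D_n \subset \mathcal M_{rad}(c)$, one has $F = E$ on $D_n$, and therefore $(D_n)_n$ is also a minimizing sequence for $F$ at level $c_{\mathcal F}$. Applying the equivariant minimax principle \cite[Theorem 7.2]{Ghoussoub} to $F$ on $S_{rad}(c)$ produces a Palais-Smale sequence $(\tilde u_n)_n \subset S_{rad}(c)$ for $F$ at level $c_{\mathcal F}$ such that $\mathrm{dist}_{H^2}(\tilde u_n, D_n) \to 0$. Setting $u_n := (\tilde u_n)_{\lambda_n}$ with $\lambda_n := \lambda_{\tilde u_n}$, we have $u_n \in \mathcal M_{rad}(c)$ and $E(u_n) = F(\tilde u_n) \to c_{\mathcal F}$.

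The remaining work is identical to the last part of the proof of Lemma \ref{ps}: using Lemma \ref{coercive}, Remark \ref{lowerboundsinMc} and the boundedness of $(D_n)_n$ in $H^2_{rad}(\R^N)$ (which follows from coercivity and the upper bound $\max_{D_n} E \le c_{\mathcal F} + 1$), I would derive $1/C \le \lambda_n^2 \le C$, and then transfer the Palais-Smale condition for $F$ on $S_{rad}(c)$ to one for $E$ on $S(c)$ via the isomorphism $\varphi \mapsto \varphi_{\lambda}$ between tangent spaces and the identity $dF(u)[\varphi] = dE(u_{\lambda_u})[\varphi_{\lambda_u}]$. The only obstacle requiring care is the one mentioned in the second paragraph, namely ensuring that the $\sigma$-homotopy stability of $\mathcal{F}$, originally formulated inside $\mathcal M_{rad}(c)$, is compatible with applying the equivariant minimax principle on the larger manifold $S_{rad}(c)$; everything else is a direct equivariant transcription of Lemma \ref{ps}.
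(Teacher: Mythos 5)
Your proposal takes essentially the same route as the paper, whose own proof is only a two-line sketch deferring to the equivariant minimax principle \cite[Theorem 7.2]{Ghoussoub} and to the way \cite[Theorem 3.2]{BaSo2} is deduced from \cite[Proposition 3.9]{BaSo2}; your explicit equivariance checks (evenness of $E$, $\lambda_{-u}=\lambda_u$, oddness of the deformation $\eta$) and the retraction $u\mapsto u_{\lambda_u}$ reconciling the family in $\mathcal{M}_{rad}(c)$ with the ambient manifold $S_{rad}(c)$ are exactly the details the paper leaves implicit. The argument is correct.
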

\begin{proof}
We are brief here and refer to \cite{BaSo2} for a proof of a closely related result, \cite[Theorem 3.2]{BaSo2}. The proof of Lemma \ref{psbis2} first relies on an equivariant version of Lemma \ref{ps} whose proof makes use of \cite[Theorem 7.2]{Ghoussoub} instead of \cite[Theorem 3.2]{Ghoussoub} but which for the rest is almost identical. Then the lemma follows just as \cite[Theorem 3.2]{BaSo2} follows from \cite[Proposition 3.9]{BaSo2}.
\end{proof}

Next, we recall the definition of the genus of a set due to M.A. Krasnosel'skii, adapted to our setting,
\begin{definition}\label{genus}
For any closed $\sigma$-invariant set $A \subset H^2(\R^N)$, the genus of $A$ is defined by
$$
j(A):= \min \{n \in \N^+ : \exists \ \varphi : A \rightarrow \R^n\backslash \{0\}, \varphi \,\, \text{is continuous and odd}\}.
$$
When there is no $\varphi$ as described above, we set $j(A)= \infty.$
\end{definition}
Now let $\mathcal{A}_\mathcal{F}$ be the family of compact and $\sigma$-invariant sets  contained in 
$\mathcal{M}_{rad}(c)$.  For any $k \in \N^+$, define
$$
J_{k}:=\{A \in \mathcal{A}_\mathcal{F}: j(A) \geq k\}
$$
and
$$
\beta_k:=\inf_{A \in \Gamma_{k}}\sup_{u \in A}E (u).
$$

\begin{lem} \label{nonempty} $ $
\begin{enumerate}[(i)]
\item Let $4 <\sigma N < 4^*$, then for  any $k \in \N^+$, $J_k \neq \emptyset$.
\item  Let $\sigma N =4$, then for any $k \in \N^+$ there exists a $c(k) > c_{N}^*$ such that $J_k \neq \emptyset$ for all $c \geq c(k).$
\end{enumerate}
\end{lem}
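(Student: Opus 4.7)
The plan is, for each $k\in\N^+$, to build an odd continuous map $\Psi:S^{k-1}\to\mathcal{M}_{rad}(c)$. Its image $A:=\Psi(S^{k-1})$ is then compact and $\sigma$-invariant, and by Borsuk--Ulam (any continuous odd map from $A$ into $\R^{k-1}\setminus\{0\}$ composed with $\Psi$ would give a continuous odd map $S^{k-1}\to\R^{k-1}\setminus\{0\}$) has genus $j(A)\geq k$, so $A\in J_k$.

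The building block is a compactly supported radial $w$ with $\|w\|_2^2=c$ and support in some annulus $\{r_1\le|x|\le R_1\}$. For any $0<\mu_1<\cdots<\mu_k$ with $\mu_{i+1}/\mu_i>R_1/r_1$, the $L^2$-preserving dilations $w_i(x):=\mu_i^{-N/2}w(x/\mu_i)$ are radial, have pairwise disjoint annular supports and $\|w_i\|_2^2=c$, while a direct calculation shows
$\int|\Delta w_i|^2=\mu_i^{-4}\int|\Delta w|^2$ and $\int|w_i|^{2+8/N}=\mu_i^{-4}\int|w|^{2+8/N}$.
By disjointness, $\psi(t):=\sum_{i=1}^kt_iw_i$ defines a continuous odd map from $S^{k-1}$ into $S_{rad}(c)$. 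For $4<\sigma N<4^*$, Lemma~\ref{unique} and Remark~\ref{regularitelambda} give a continuous projection $u\mapsto u_{\lambda_u}:S(c)\to\mathcal{M}(c)$; because $\lambda_{-u}=\lambda_u$ and $(-u)_\lambda=-u_\lambda$, the composition $\Psi(t):=\psi(t)_{\lambda_{\psi(t)}}$ is odd, which proves part (i).

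For $\sigma N=4$, the projection is only defined on the subset $\mathcal{E}(c)\subset S(c)$ on which $\gamma\int|\Delta u|^2<\tfrac{N}{N+4}\int|u|^{2+8/N}$, so we must secure $\psi(t)\in\mathcal{E}(c)$ uniformly in $t\in S^{k-1}$. Using disjointness and the scaling identities above, this condition reads
\[
\gamma\,r\,\sum_{i=1}^{k}t_i^2\mu_i^{-4}<\tfrac{N}{N+4}\sum_{i=1}^{k}|t_i|^{2+8/N}\mu_i^{-4},\qquad r:=\frac{\int|\Delta w|^2}{\int|w|^{2+8/N}}.
\]
The constant $m_k:=\min_{t\in S^{k-1}}\bigl(\sum|t_i|^{2+8/N}\mu_i^{-4}\bigr)/\bigl(\sum t_i^2\mu_i^{-4}\bigr)$ is strictly positive by compactness, so the condition reduces to $\gamma r<\tfrac{Nm_k}{N+4}$. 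The Gagliardo-Nirenberg inequality \eqref{G-N-H2-ineq} gives $r\geq 1/(B_N(4/N)c^{4/N})$, and by truncating the Gagliardo-Nirenberg optimizer near both $0$ and $\infty$ one produces radial annular-supported profiles with $\|w\|_2^2=c$ and $r$ arbitrarily close to $1/(B_N(4/N)c^{4/N})$. Hence the sufficient condition holds as soon as $1/(B_N(4/N)c^{4/N})<Nm_k/((N+4)\gamma)$, i.e., $c^{4/N}>(c_N^*)^{4/N}/m_k$; choosing any $c(k)>c_N^*/m_k^{N/4}>c_N^*$ works, and the construction then proceeds as in part (i).

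The main obstacle is part (ii): one must simultaneously (a) produce a compactly supported radial profile with $\|w\|_2^2=c$ and near-optimal Gagliardo-Nirenberg ratio \emph{together with} annular support (so that the $w_i$ have disjoint supports), and (b) keep the resulting geometric constant $m_k$ under control so that $c(k)$ is finite. Both are soft issues and only affect the sharpness of $c(k)$; the statement merely requires its existence above $c_N^*$.
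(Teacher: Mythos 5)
Your argument is correct and rests on the same core mechanism as the paper's proof: take a sphere coming from a $k$-dimensional subspace of $H^2_{rad}(\R^N)$ inside $S(c)$, push it onto $\mathcal{M}_{rad}(c)$ with the odd continuous fiber map $u\mapsto u_{\lambda_u}$ of Lemma \ref{unique} (continuity from Remark \ref{regularitelambda}), and use that the genus does not decrease under odd continuous maps -- your direct Borsuk--Ulam argument is the same fact. Where you genuinely diverge is in the verification, for $\sigma N=4$, that the sphere lies in $\mathcal{E}(c)$ when $c$ is large. The paper does this softly: on a fixed finite-dimensional subspace $V$ all norms are equivalent, so for $u\in V\cap S(c)$ one has $\gamma\|\Delta u\|_2^2\le C_1 c$ while $\|u\|_{2+8/N}^{2+8/N}\ge C_2\,c^{1+4/N}$, and the inequality $\gamma\|\Delta u\|_2^2<\tfrac{N}{N+4}\|u\|_{2+8/N}^{2+8/N}$ follows for $c$ large. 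Your explicit choice of disjointly supported dilates makes the threshold quantitative and nearly sharp ($c(k)\approx c_N^*/m_k^{N/4}$), but the one step you should not treat as soft is the claim that the Gagliardo--Nirenberg optimizer can be truncated to an annulus with nearly optimal ratio $r$: excising a ball $B_\delta$ around the origin with a cutoff $\eta$ contributes terms of order $\delta^{-4}\int_{B_{2\delta}}u^2=O(\delta^{N-4})$ to $\|\Delta(\eta u)\|_2^2$, which does not vanish for $N\le 4$ -- precisely the dimensions relevant to Theorem \ref{thm}(ii). Fortunately this near-optimality is not needed: as you yourself observe, for any fixed annular profile $w_0$ with $\|w_0\|_2=1$ the rescaling to mass $c$ gives $r=c^{-4/N}r_0$, so the sufficient condition $\gamma r<Nm_k/(N+4)$ holds for all $c$ large enough, which is all the statement asks for; with that simplification your proof is complete and matches the paper's in substance.
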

\begin{proof}
For $k \in \N^+$ and $V \subset H^2_{rad}(\R^N)$ such that $\dim V=k$, we set $SV(c):=V \cap S(c)$.
First we consider the case $4 <\sigma N < 4^*$. 
By the basic property of the genus, see \cite[Theorem 10.5]{AmMa}, we have that $j(SV(c))= \dim V =k$. In view of Lemma \ref{unique}, for any $u\in SV(c)$ there exists a unique $\lambda_u^\ast >0$ such that $ u_{\lambda_u^\ast} \in \mathcal{M}(c)$. It is easy to check that the mapping $\varphi: SV(c)\rightarrow \mathcal{M}(c)$ defined by $\varphi (u)=u_{\lambda_u^\ast}$ is continuous and odd. Then \cite[Lemma 10.4]{AmMa} leads to $j( \varphi(SV(c))) \geq j(SV(c))=k$ and this shows that $J_k \neq \emptyset$.  

In the case $\sigma N=4$ we prove that $J_k \neq \emptyset$ when $c > c_N^*$ is large enough. 
Using the fact that all norms are equivalent in a finite dimensional subspace, we get, for $c>c_N^*$ large enough and for any $u \in SV(c)$,
$$
\gamma\int_{\R^N}|\Delta u|^2 \, dx < \frac{N}{N+4} \int_{\R^N}|u|^{2 + \frac 8N} \, dx.
$$
This shows that $\sup_{\lambda >0}E(u_{\lambda}) < \infty$ and thus we can apply Lemma \ref{unique} for any $u \in SV(c)$. Namely that there exists a unique $\lambda_u^{\ast}>0$ such that $Q(u_{\lambda_u^{\ast}})=0$. At this point, we conclude as in the case $4 < \sigma N < 4^*$.
\end{proof}

\begin{proof}[Proof of Theorem \ref{thm}]
Consider the minimax level $\beta_k$. From Lemma \ref{nonempty} we know that each of the classes $J_k$ is non empty and thus to each of them we can apply Lemma \ref{psbis2} to obtain the existence of  Palais-Smale sequences $(u_n^k)_n \subset \mathcal{M}_{rad}(c)$ for $E$ restricted to $S(c)$ at the levels $\beta_k$. By Lemma  \ref{conv} we know that, up to a subsequence, $(u_n^k)_n$ converges strongly in $H^2(\R^N)$ if $(u_n^k)_n$ converges strongly in $L^{2 \sigma +2}(\R^N)$ and if the limit $u^k$ solves \eqref{eq:uc} for some $\alpha_c^k >0$. The first condition holds because the embedding 
$H^2_{rad}(\R^N)\hookrightarrow L^p(\R^N)$ is compact for $2< p< 4^*$ whereas the positivity of $\alpha_c^k$ follows from Lemma \ref{sign-la} when $ c<c_{N,\sigma}$. Observe that in the case $\sigma N=4$,  we can conclude only if $c(k) \leq c_{N,\sigma}$. This last inequality holds true when $2\leq N\leq 4$ since $c_{N,\sigma}=\infty$. Thus, under our assumptions,  $(u_n^k)_n$ converges to a $u^k$ which is a critical point of $E$ on $S(c)$.  Now to show that if two (or more) values of $\beta_k$ coincide, then $E$ has infinitely many critical points at level $c(k)$, one can either proceed in the usual way, or adapt \cite[Lemma 6.4]{BaSo2} to the present setting.
\end{proof}

Next, we turn to the proof of Theorem \ref{radN34}. Compared with Theorem \ref{thm}, the additional argument we use to eliminate the restriction $c\in (0, {c}_{\sigma, N})$ is a sharp decay estimate for the solutions of \eqref{4NLSeqend}. 
Before proceeding, we recall some classical facts. Let $G$ be the fundamental solution to $\Delta^2 -\Delta$, i.e.
$$\Delta^2 G -\Delta G = \delta_0.$$
We recall that
$$G= g_{-1} - g_0,$$
where $g_{-a}$ is the fundamental solution to $-\Delta +a $, where $a\in \R $, see \cite[Proposition 3.13]{BoCaMoNa}. It is well-known that $g_0 (x)= c_N |x|^{2-N}$ whereas for $a>0$,
$$g_{-a} (x)=\begin{cases}C_1 |x|^{2-N} + o(|x|^{2-N}),\ when\ |x|\rightarrow 0^+, \\  
C_2\dfrac{e^{-\sqrt{a} |x|}}{|x|^{\frac{N-1}{2}}} + o\left( \dfrac{e^{-\sqrt{a} |x|}}{|x|^{\frac{N-1}{2}}}\right),\ when\ |x|\rightarrow \infty,\end{cases}$$
where $C_1,C_2>0$.

\begin{proof}[Proof of Proposition \ref{sharpdecay}]
To simplify the notations, we assume that $\gamma=1$. By elliptic regularity, $u\in C^{4,\alpha}$, see for instance \cite[Theorem 3.7]{BoCaMoNa}.

\medbreak 

\noindent{\bf Claim 1 :} $|u|^{2\sigma}u \in L^{1}(\R^N) $. 
Since $u\in X$, $u\in L^{s}(\R^N)$ for $s\ge \frac{2N}{N-2}$ and therefore we can assume that $2\sigma +1< \frac{2N}{N-2}$ otherwise we are done. Let $t=2\sigma +2$. Since $u^{t-1}\in L^{\frac{2N}{(N-2)(t-1)}}(\R^N)$, $W^{2,p}$-estimates imply that $-\Delta u \in L^{\frac{2N}{(N-2)(t-1)}}(\R^N)$ and in turn that $u\in L^{\frac{2N}{(N-2)(t-1)-4}}(\R^N)$ by  
Hardy-Littlewood-Sobolev inequality, see \cite[Chapter 5, \S 1.2 - Theorem 1]{Stein}. Since $N\leq 6$, we have 
$$q_0=\dfrac{2N}{(N-2)(t-1)-4}< \dfrac{2N}{N-2}.$$ 
If $q_0\leq t-1$, we are done. Otherwise we proceed by induction. Setting 
$$q_{k+1}=\dfrac{q_k N}{(t-1)N-2 q_k}$$
 for $k\ge 0$, we deduce in the same way that $\Delta u\in L^{\frac{q_k}{t-1}}(\R^N)$ implies $u\in L^{q_{k+1}}(\R^N)$ as long as $q_k>t-1$.
Moreover, if $q_k>t-1$, we have $0<(t-1)N - 2q_k < (t-1)(N-2)$ which yields 
$$
q_k-q_{k+1}
= q_k \frac{(t-2) N - 2q_k}{(t-1)N - 2q_k}
> \dfrac{(t-2) N - 2q_k}{N-2}
$$
and using the fact that $q_k<t$ implies $(t-2) N - 2q_k>t(N -2)-2N$, we infer that 
$$q_k-q_{k+1}> \dfrac{t(N -2)-2N}{N-2}=t-\dfrac{2N}{N-2}.$$
Since $\sigma N > 2^\ast$, we have that $\varepsilon = t-\dfrac{2N}{N-2}>0$ and therefore $q_{k+1}<q_k-\varepsilon$. 
It follows that $q_k<t-1$ after a finite number of iterations and the claim is proved.  

\medbreak

\noindent{\bf Claim 2 :} there exists a constant $C>0$ such that, for $|x|$ large enough, 
$$|u(x)|\leq C|x|^{2-N}.$$ 
By Claim 1, we know $f=|u|^{2\sigma} u \in L^1 (\R^N) \cap L^\infty (\R^N)$. Thus,  we deduce from \cite[Proposition $3.14$]{BoCaMoNa} that 
\begin{equation}
\label{ewrep}
u(x)= \int_{\R^N} f(y) G(x-y)dy.
\end{equation}
Next, we follow closely \cite[Lemma 2.9]{EvWe}. Let $R>0$. We define $M_R=\R^N \backslash B_R$ and $\tilde{f}=|u|^{2\sigma}$. Using H\" older inequality with $q=1+\frac{1}{2\sigma}$ and noticing that $|G(x)|\leq C |x|^{2-N}$, we have
\begin{align}
\label{odehe4}
\int_{M_R} & |\tilde{f}(y)| |G(x-y)| dy \leq  C \sup_{y\in M_R}|\tilde{f}(y)| \left(\int_{B_1 (x)} |x-y|^{ (2-N)} dy \right)\\
& + C \left(\int_{M_R}|\tilde{f}(y)|^q dy \right)^{1/q} \left(\int_{\R^N \backslash B_1 (x)} |x-y|^{(2-N) q^\prime } dy \right)^{1/q^\prime}\rightarrow 0,\ as\ R\rightarrow \infty .\nonumber
\end{align}
Notice that $(2-N)q^\prime <-N$ thanks to our assumptions on $\sigma$. So we choose $R>0$ such that
\begin{equation}
\label{odehe1}
 \sup_{x\in \R^N} \int_{M_R} |\tilde{f}(y)| |G(x-y)|dy < 2^{-N}.
\end{equation}
We define
 $$u_0 (x)=\int_{B_R} G(x-y) \tilde{f}(y) u(y) dy,\ B_0(x)=\int_{M_R}G(x-y) \tilde{f}(y) u(y) dy,$$
and
$$u_k (x)=\int_{M_R}G(x-y) \tilde{f}(y) u_{k-1}(y) dy,\ B_k (x)=\int_{M_R}G(x-y) \tilde{f}(y) B_{k-1}(y) dy. $$
Using the fact that $u_{k+1} =B_{k}-B_{k+1}$, for all $k\geq 0$, we see that for any $m\in \N$, we can decompose $u$ as 
$$u=\sum_{k=0}^m u_k +B_m.$$
Set $\beta_k =\sup_{|x|\geq R}|B_k (x)|$.
One can show proceeding as above and using that $\tilde{f}u \in L^1 (\R^N) \cap L^s (\R^N)$ that $\beta_0 <\infty$. Using \eqref{odehe1}, we find that $\beta_k \leq 2^{-N}\beta_{k-1}\leq 2^{-kN}\beta_0\rightarrow 0$ as $k\rightarrow \infty$. This implies that $B_m \rightarrow 0$ as $m \rightarrow \infty$ which shows that $u=\sum_{k=0}^\infty u_k $ holds uniformly in $M_R$. Set $\mu_k =\sup_{|x|\geq R} |x|^{N-2}|u_k (x)|$. 
Since $u_0\in L^\infty (\R^N )$ and, for $|x|\geq 2R$,
$$|u_0 (x)|\leq  C \int_{B_R} |x-y|^{2-N} |\tilde{f} (y) u(y)|dy \leq C |x|^{2-N},$$
we have $\mu_0<\infty $. Moreover, we have, using Young inequality and \eqref{odehe1},
\begin{align*}
|x|^{N-2} |u_k (x)|&\leq C \mu_{k-1} |x|^{N-2}\int_{M_R} |\tilde{f}(y)||y|^{\textcolor{red}{2-N}}|G(x-y )| dy \\
&\leq C \mu_{k-1} \int_{M_R} |\tilde{f} (y)| (1+ |y|^{2-N} |x-y|^{N-2}) |x-y|^{2-N}\\
&\leq \frac{\mu_{k-1}}{2}.
\end{align*}
Iterating the previous estimate, we find that $\sum_{k\geq 0}\mu_k <\infty$. Since $u=\sum_{k=0}^\infty u_k$, we deduce that $|u(x)|\leq C |x|^{2-N}$, for $|x| $ large enough.
\medbreak

\noindent {\bf Conclusion.} To derive the sharp decay, we use a maximum principle for linear cooperative systems. Letting $\tilde{f}(x)=|u|^{2\sigma}$, we can rewrite \eqref{4NLSeqend} as
\begin{equation}
\label{prop5.1sys}
\begin{cases}-\Delta u=v,\\ -\Delta v+v= \tilde{f} u .\end{cases}
\end{equation}
Applying \cite[Theorem 3]{Sir} with $L= \begin{pmatrix} \Delta \\ \Delta \end{pmatrix}$ and $C=\begin{pmatrix}0 & 1 \\ \tilde{f}  & -1  \end{pmatrix}$, we see that if there exists a $C^2$ function $V$ such that  $V(x)\ge 0$ and $-\Delta V(x)\ge 0$, for all $x\in \partial{\Omega}$ and 
\begin{equation}
\label{supsolfin}
\Delta^2 V -\Delta V - \tilde f(x) V\geq 0,\ in\ \Omega ,
\end{equation}
then the maximum principle applies for the vector $U=\begin{pmatrix} u\\ v  \end{pmatrix}$ i.e. if $LU+CU\leq 0$ in $\Omega$ and $U\geq 0$ in $\partial \Omega$, then $U\geq 0$ in $\Omega$. We claim that $u$ has a constant sign in $\R^N \backslash B_{R}$ if $R>0$ is large enough. Indeed, suppose by contradiction that this is not the case. Then $u$ has to oscillate around $0$ which implies that there exist $R_2>R_1>R$ such that $ u$ attains a positive local maximum at $R_1$ and $R_2$ for which $U(R_i) \geq 0$, $i=1,2$, and a negative minimum in between. From Claim 2, we deduce that $\tilde f(x)\leq C |x|^{2\sigma (2-N)}$, for $|x|$ large enough. So taking $\Omega =B_{R_2} \backslash B_{R_1}$, one sees that for $\varepsilon>0$ small and $R_1>R_2>R	$ big enough, the function $V(x)=|x|^{2-N+\varepsilon}$ satisfies \eqref{supsolfin} since 
$$
\Delta^2 V(x)-\Delta V(x) - \tilde f(x)V(x)
  \geq  \varepsilon (N-2-\varepsilon)|x|^{-N+\varepsilon}\left(1+(-N+\varepsilon)(-2+\varepsilon)|x|^{-2} - C |x|^{2-2\sigma (N-2)}\right)
$$
and $2-2\sigma (N-2)<0$. Thus the maximum principle applies and gives that $u$ and $v=-\Delta u \geq 0$ in $B_{R_2} \backslash B_{R_1}$ which yields a contradiction. Therefore, there exists $R>0$ such that $u$ has a constant sign in $\R^N \backslash B_{R}$. We assume without loss of generality that $u\geq 0$ in $\R^N \backslash B_{R}$. Hence, applying the maximum principle to the second equation of \eqref{prop5.1sys}, we deduce that $v=-\Delta u \geq 0$ in $\R^N \backslash B_{R}$. Thanks to \cite[Lemma 4.2]{MoVS}, we deduce that there exists $C\in \R\setminus\{0\}$ such that $$\lim_{|x|\to\infty}\frac{u(x)}{|x|^{2-N}}=C.$$ 
\end{proof}
\begin{remark}
\begin{enumerate}[(i)] 
\item Proposition \ref{sharpdecay} implies in particular that, under its assumption, any radial solution $u\in X$ to \eqref{4NLSeqend} does not belong to $L^2 (\R^N)$.

\item  
Let $\alpha> 0$ and assume that $N=3$ and $\sigma > 2$ or $N=4$ and $\sigma > 1$. Assume that $u\in L^{2\sigma +2}$ is a radial solution to 
\begin{equation*}
 \gamma \Delta^2 u - \Delta u  -\alpha u = |u|^{2 \sigma}u.
\end{equation*}
We conjecture that also in this case, the solution $u$ does not belong to $L^2(\R^N)$ (see \cite{GLZ,Mandel} for the case $\gamma=0$). In this direction, we only established in \cite{BCM} that $|u(x)|\leq C |x|^{\frac{1-N}{2}}$, for some constant $C>0$. 
\end{enumerate}
\end{remark}

The next statement is a direct corollary of Proposition \ref{sharpdecay}, Lemma \ref{monotoneprop} and Lemma \ref{sign-la}, both extended to the radial setting.
\begin{cor} \label{sign-la2}
Let $N=3,4$ and assume that $4 < \sigma N < \infty $. For any $c>0$, if  $u_c \in S(c)$ is a radial solution to 
\begin{equation}\label{eq:min-alphac}
\gamma \Delta^2 u - \Delta u + \alpha u = |u|^{2 \sigma}u,
\end{equation}
with $E(u_c) = \gamma_{rad} (c)$, then $ \alpha> 0$. 
\end{cor}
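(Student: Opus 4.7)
The plan is to combine the radial analog of Lemma \ref{monotoneprop}, which rules out $\alpha<0$, with Proposition \ref{sharpdecay}, which rules out $\alpha=0$.

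First, I would observe that the proof of Lemma \ref{monotoneprop} carries over verbatim to the radial setting once we replace $\Gamma$ by $\Gamma_{rad}$ and $\mathcal M(c)$ by $\mathcal M_{rad}(c)$: the scaling $u_{t,\lambda}(x):=\lambda^{N/4}\sqrt{t}\,u_c(\sqrt\lambda x)$ preserves radial symmetry, the Implicit Function argument is identical, and Remark \ref{rem:gammaradmonotone} guarantees that $c\mapsto \Gamma_{rad}(c)$ is nonincreasing on $(c_0,\infty)$. Applied to our radial ground state $u_c$ with $E(u_c)=\Gamma_{rad}(c)$, this shows that the Lagrange multiplier $\alpha$ in \eqref{eq:min-alphac} satisfies $\alpha\ge 0$.

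Second, suppose for contradiction that $\alpha=0$. Then $u_c\in H^2(\R^N)\subset X$ is a nontrivial radial solution to
\begin{equation*}
\gamma\Delta^2 u_c-\Delta u_c=|u_c|^{2\sigma}u_c,\qquad x\in\R^N.
\end{equation*}
Since $N\in\{3,4\}$ and $4<\sigma N<\infty$, we have $\sigma>2$ when $N=3$ and $\sigma>1$ when $N=4$, so Proposition \ref{sharpdecay} applies and yields a constant $C\in\R\setminus\{0\}$ with
\begin{equation*}
\lim_{|x|\to\infty}\frac{u_c(x)}{|x|^{2-N}}=C.
\end{equation*}

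Third, I would translate this decay into an integrability obstruction. For $R$ large enough one has $|u_c(x)|\ge \tfrac{|C|}{2}|x|^{2-N}$ on $\{|x|>R\}$, whence
\begin{equation*}
\int_{|x|>R}|u_c|^2\,dx\ge \frac{C^2}{4}\,|\mathbb S^{N-1}|\int_R^\infty r^{3-N}\,dr,
\end{equation*}
which diverges for both $N=3$ (integrand $\sim 1$) and $N=4$ (integrand $\sim 1/r$). Hence $u_c\notin L^2(\R^N)$, contradicting $u_c\in S(c)$. We conclude $\alpha>0$.

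The only real subtlety here is the first step, namely checking that Lemma \ref{monotoneprop} genuinely survives the restriction to radial functions. This is routine provided one has already established Remark \ref{rem:gammaradmonotone}; the rest of the argument is a short deduction from the decay estimate, which is the genuinely substantive input.
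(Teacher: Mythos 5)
Your overall strategy is the one the paper intends: the radial analogue of Lemma \ref{monotoneprop} (using Remark \ref{rem:gammaradmonotone}) rules out $\alpha<0$, and the sharp decay of Proposition \ref{sharpdecay} rules out $\alpha=0$ because $|x|^{2-N}\notin L^2$ near infinity when $N=3,4$. The first and third steps are fine, and the integrability computation is correct.

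There is, however, a genuine gap in your second step. You assert that $4<\sigma N$ with $N=3$ forces $\sigma>2$; in fact it only forces $\sigma>4/3$. Proposition \ref{sharpdecay} requires $\sigma>2$ when $N=3$, so for the range $N=3$, $4/3<\sigma\le 2$ your argument cannot invoke the decay estimate, and indeed equation \eqref{4NLSalpha=0} simply has no solution there (the Derrick--Pohozaev computation following \eqref{defX} shows nonexistence when $4-\sigma(N-4)$ and $2-\sigma(N-2)$ have the same sign, which happens for $N=3$, $\sigma\le 2$). The correct way to close this case is not the decay estimate but Lemma \ref{sign-la}: identity \eqref{nonexe1} reads
\begin{equation}
c\,\alpha \;=\; \gamma\Bigl(\tfrac{4\sigma+4}{\sigma N}-1\Bigr)\int_{\R^N}|\Delta u_c|^2\,dx+\Bigl(\tfrac{2\sigma+2}{\sigma N}-1\Bigr)\int_{\R^N}|\nabla u_c|^2\,dx,
\end{equation}
and for $N=3$, $\sigma\le 2$ both coefficients are nonnegative with the first strictly positive, so $\alpha>0$ directly, for every $c$ and without any minimality assumption. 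This is precisely why the paper lists Lemma \ref{sign-la} alongside Proposition \ref{sharpdecay} and Lemma \ref{monotoneprop} as an ingredient of the corollary. Once you split $N=3$ into the subcases $\sigma\le 2$ (handled by Lemma \ref{sign-la}) and $\sigma>2$ (handled by your decay argument), and note that $N=4$ with $\sigma>1$ is already covered by Proposition \ref{sharpdecay}, the proof is complete.
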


\begin{proof}[Proof of Theorem \ref{radN34}]
For $N=1,2$ and $\sigma N\geq 4$ or $N=3$ and $4/3 \leq \sigma \leq 2$ or $N=4$ and $\sigma=1$, Theorem \ref{thmmain} can just be recast in the radial functional setting. For $N=3$ and $\sigma>2$ or $N=4$ and $\sigma>1$, Lemma \ref{psbis}, considered in the radial setting, provides the existence of a Palais-Smale sequence $(u_n)_n \subset  \mathcal{M}_{rad}(c)$  for $E$ restricted to $ S_{rad}(c)$ which minimizes $E$.  Up to a subsequence, it weakly converges to a $u_c \in H^2_{rad}(\R^N)$ which solves \eqref{eq:min-alphac} for a $\alpha \in \R$. Now by Lemma  \ref{conv}, we know that, up to a subsequence, $(u_n)_n$ converges strongly in $H^2(\R^N)$ provided it converges strongly in $L^{2 \sigma +2}(\R^N)$ and $\alpha >0$. 
Note that the strong convergence in $L^{2 \sigma +2}(\R^N)$ follows from the compact embedding of
$H^2_{rad}(\R^N)$ into $L^{2 \sigma +2}(\R^N)$ and that using this convergence, one can easily deduce that $E(u_c) = \Gamma_{rad}(c)$. To end the proof, we can assume without restriction that $\|u_c\|_2^2:=d <c$ (otherwise we are done). Since $\gamma_{rad}$ is nonincreasing, see Remark \ref{rem:gammaradmonotone}, it then follows that $E(u_c)= \gamma_{rad}(d)$ and Corollary \ref{sign-la2} applies. We deduce that $\alpha >0$ and thus  $(u_n)_n $ strongly converges in $H^2(\R^N)$ to $u_c$. In particular $u_c \in S(c)$. 
\end{proof}

\begin{cor}\label{gammaradmonostrict}
 If $N=3$ and $\sigma\geq 2$ or $N=4$ and $\sigma\ge 1$, then the map $c \mapsto \gamma_{rad} (c)$ is strictly decreasing. 
\end{cor}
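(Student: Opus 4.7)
The strategy is to replicate the proof of Corollary \ref{gammamonostrict} in the radial setting. Concretely, at each mass $c>c_0$ I will exhibit a radial minimizer for $\Gamma_{rad}$, verify that its associated Lagrange multiplier is strictly positive, and then invoke the radial analogue of Lemma \ref{monotoneprop} to get strict decrease of $\Gamma_{rad}$ in a right neighborhood of every point; combined with the already-established monotonicity, this forces strict global decrease.

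First, for arbitrary $c>c_0$, Theorem \ref{radN34} produces a radial function $u_c\in S(c)$ with $E(u_c)=\Gamma_{rad}(c)$ solving
$$
\gamma\Delta^2 u_c-\Delta u_c+\alpha_c u_c=|u_c|^{2\sigma}u_c
$$
for some $\alpha_c\in\R$. The next step is to show $\alpha_c>0$. When $N=3$ with $\sigma\geq 2$ or $N=4$ with $\sigma>1$ we have $4<\sigma N<\infty$, so Corollary \ref{sign-la2} delivers $\alpha_c>0$ directly. The remaining case $N=4$, $\sigma=1$ lies on the mass-critical boundary $\sigma N=4$ and is \emph{not} covered by Corollary \ref{sign-la2}; for it I will appeal instead to Lemma \ref{sign-la}(i), which records $c_{4,1}=\infty$, so that $\alpha_c>0$ for every admissible $c>c_0=c_4^\ast$.

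Once $\alpha_c>0$ is secured, I apply the radial version of Lemma \ref{monotoneprop}: its proof relies only on the two-parameter deformation $u_{t,\lambda}(x)=\lambda^{N/4}\sqrt t\,u(\sqrt\lambda x)$ and on the Implicit Function Theorem applied to the maps $\beta_E,\beta_Q$, operations that preserve radiality, so the argument transfers verbatim and yields strict decrease of $\Gamma_{rad}$ in some right neighborhood of the chosen $c$.

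Finally I combine this pointwise strict decrease with the global non-increasing property of $\Gamma_{rad}$ recorded in Remark \ref{rem:gammaradmonotone}: if $c_0<c_1<c_2$ satisfied $\Gamma_{rad}(c_1)=\Gamma_{rad}(c_2)$, monotonicity would force $\Gamma_{rad}$ to be constant on $[c_1,c_2]$, contradicting strict decrease in a right neighborhood of $c_1$. The only genuinely delicate point in the plan is the separate handling of the critical boundary case $N=4$, $\sigma=1$, which cannot use Corollary \ref{sign-la2}; apart from that bookkeeping, the argument is a straightforward radial transposition of Corollary \ref{gammamonostrict}.
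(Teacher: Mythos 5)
Your proposal is correct and follows essentially the same route as the paper: combine the radial existence result (Theorem \ref{radN34}) with the positivity of the Lagrange multiplier and the radial version of Lemma \ref{monotoneprop}, then upgrade local strict decrease to global strict decrease via the nonincreasing property. Your explicit treatment of the boundary case $N=4$, $\sigma=1$ via Lemma \ref{sign-la}(i) (which Corollary \ref{sign-la2} does not cover, since it requires $4<\sigma N$) is a point the paper's one-line proof leaves implicit, and it is handled correctly.
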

\begin{proof}
One just need to argue as in the proof of Corollary \ref{gammamonostrict} taking Corollary \ref{sign-la2} into account.  
\end{proof}

\section{A concentration phenomenon, proof of Theorem \ref{concentration}}\label{concentrationpart}

In this section, we establish the concentration of solutions, as $c \mapsto c_N^*$, described in Theorem \ref{concentration}. As a preliminary result we derive the following lemma.

\begin{lem}\label{ground}
Let $\sigma N=4$ and $u \in H^2(\R^N)$ be a non trivial solution of
\begin{align} \label{nls}
\gamma \Delta^2 u + u =|u|^{\frac 8N}u.
\end{align}
Then $\|u\|_2^2 \geq c^*_N$. Moreover, if $\|u\|_2^2=c^*_N$, then $u$ is a minimizer of the functional
$$
F(u):=\frac {\gamma}{2} \int_{\R^N}|\Delta u|^2\, dx + \frac 12 \int_{\R^N}|u|^2 \, dx - \frac{N}{2N+8} \int_{\R^N}|u|^{2 + \frac 8N} \, dx,
$$
 on its Nehari set
\begin{equation*}
\mathcal{N}:= \{u \in H^2(\R^N) \backslash \{0 \} : F'(u)u =0 \}.
\end{equation*}

\end{lem}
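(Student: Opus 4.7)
The plan is to combine three ingredients: the Nehari identity, a Pohozaev-type identity tailored to the equation \eqref{nls}, and the sharp Gagliardo--Nirenberg inequality \eqref{G-N-H2-ineq} with $\sigma=4/N$. Testing \eqref{nls} against $u$ immediately gives
$$\gamma\|\Delta u\|_2^2 + \|u\|_2^2 = \|u\|_{2+8/N}^{2+8/N}.$$
On the other hand, for the dilation $u_t(x):=u(tx)$ one computes
$$F(u_t)=\frac{\gamma t^{4-N}}{2}\|\Delta u\|_2^2 + \frac{t^{-N}}{2}\|u\|_2^2 - \frac{N t^{-N}}{2N+8}\|u\|_{2+8/N}^{2+8/N},$$
and since $u$ is a critical point of $F$ in $H^2(\R^N)$ (which by elliptic regularity can be evaluated against $\frac{d}{dt}u_t|_{t=1}$), imposing $\frac{d}{dt}F(u_t)|_{t=1}=0$ yields the Pohozaev-type identity
$$\gamma(N-4)\|\Delta u\|_2^2 + N\|u\|_2^2 = \frac{N^2}{N+4}\|u\|_{2+8/N}^{2+8/N}.$$

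Eliminating the nonlinear term between the two identities gives the rigidity relation $\gamma\|\Delta u\|_2^2 = \frac{N}{4}\|u\|_2^2$, so that $\gamma\|\Delta u\|_2^2 + \|u\|_2^2 = \frac{N+4}{4}\|u\|_2^2$. Inserting this into Nehari and invoking \eqref{G-N-H2-ineq} with $\sigma=4/N$ produces
$$\frac{N+4}{4}\|u\|_2^2 = \|u\|_{2+8/N}^{2+8/N} \le B_N(4/N)\|\Delta u\|_2^2\,\|u\|_2^{8/N} = \frac{N B_N(4/N)}{4\gamma}\|u\|_2^{2+\frac{8}{N}},$$
which rearranges to $\|u\|_2^{8/N}\ge \gamma C(N)$, i.e.\ $\|u\|_2^2\ge c_N^*$, proving the first assertion.

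For the equality case $\|u\|_2^2 = c_N^*$, the rigidity relation yields immediately $F(u) = \frac{2(\gamma\|\Delta u\|_2^2+\|u\|_2^2)}{N+4} = \frac{c_N^*}{2}$. To conclude that $u$ minimizes $F$ on $\mathcal N$, I will show that $F(v)\ge c_N^*/2$ for every $v\in\mathcal N$. Writing $a:=\gamma\|\Delta v\|_2^2$, $b:=\|v\|_2^2$, one has $F(v)=\frac{2(a+b)}{N+4}$, and the Nehari identity together with \eqref{G-N-H2-ineq} gives the constraint $\gamma(a+b)\le B_N(4/N)\,a\,b^{4/N}$. Optimizing $a+b$ over this region (for fixed $b>(\gamma/B_N(4/N))^{N/4}$ the constraint forces $a\ge \gamma b/(B_N(4/N) b^{4/N}-\gamma)$, then minimizing in $b$) one finds the minimum is attained at $b=c_N^*$, $a=\frac{N}{4}c_N^*$, yielding $a+b=\frac{N+4}{4}c_N^*$ and therefore $F(v)\ge \frac{c_N^*}{2}$. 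This gives the minimality of $u$.

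The only non-algebraic point is the justification of the Pohozaev identity: since \eqref{nls} is biharmonic, one needs $u\in H^4_{\mathrm{loc}}(\R^N)$ with sufficient decay to legitimize the scaling derivative, which follows from standard elliptic regularity in the spirit of Lemma~\ref{Pohozaevs}. Everything else is an algebraic manipulation that exploits the critical scaling $\sigma N=4$, which is precisely what makes the Pohozaev identity couple the two quadratic terms so tightly that the ratio $\gamma\|\Delta u\|_2^2/\|u\|_2^2$ is forced to be $N/4$.
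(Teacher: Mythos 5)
Your proof is correct. For the first assertion you follow essentially the paper's route: your rigidity relation $\gamma\|\Delta u\|_2^2=\tfrac N4\|u\|_2^2$, obtained by eliminating the nonlinear term between the Nehari and Pohozaev identities, is exactly the identity $Q(u)=0$ of Lemma \ref{Pohozaevs} (with $\mu=0$) combined with $I(u)=0$, and the paper then concludes with the same application of the sharp Gagliardo--Nirenberg inequality \eqref{G-N-H2-ineq}; your scaling derivation of the Pohozaev identity is the standard heuristic, properly backed up by the citation of Lemma \ref{Pohozaevs}, which is all the paper does as well.

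Where you genuinely diverge is in the minimality claim. The paper disposes of it in one line: from $Q(u)=0$ one gets $F(v)=\tfrac12\|v\|_2^2$ for every nontrivial \emph{solution} $v$, so any solution with $\|v\|_2^2=c_N^*$ realizes the least energy $\tfrac12 c_N^*$ among solutions. This argument only covers the reading of $\mathcal N$ as the set of critical points (as in \eqref{ground-state}); it does not by itself bound $F$ from below on the full Nehari manifold $\{F'(v)v=0\}$, since a general Nehari element does not satisfy $Q(v)=0$. Your explicit optimization --- writing $F(v)=\tfrac{2}{N+4}(a+b)$ with $a=\gamma\|\Delta v\|_2^2$, $b=\|v\|_2^2$, and minimizing $a+b$ subject to the Gagliardo--Nirenberg constraint $\gamma(a+b)\le B_N(4/N)\,a\,b^{4/N}$, with minimum $\tfrac{N+4}{4}c_N^*$ attained at $b=c_N^*$, $a=\tfrac N4 c_N^*$ --- proves the lower bound $F\ge \tfrac12 c_N^*$ on the whole Nehari set, which is the statement as literally written and is strictly stronger than what the paper's one-liner delivers. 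The computation checks out (the boundary behaviour of $b\mapsto b\,Kb^{4/N}/(Kb^{4/N}-1)$ confirms the critical point is a minimum), so your argument is complete.
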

\begin{proof}
We know from Lemma \ref{Pohozaevs} that
\begin{align}\label{p2}
\gamma \int_{\R^N}|\Delta u|^2 \, dx = \frac {N}{N+4} \int_{\R^N}|u|^{2 + \frac 8N}\, dx,
\end{align}
which implies that
\begin{align}\label{energy}
F(u)=\frac 12 \int_{\R^N}|u|^2 \, dx.
\end{align}
Observe now that if $u\ne 0$ solves \eqref{nls}, then $\|u\|_2^2 \geq c^*_N$. Indeed, using the Gagliardo-Nirenberg inequality \eqref{G-N-H2-ineq}, we deduce from  \eqref{p2} that
$$
\gamma \int_{\R^N}|\Delta u|^2 \, dx \leq \left(\frac{\|u\|_2^2}{c^*_N}\right)^{\frac 4N} \gamma \int_{\R^N}|\Delta u|^2 \, dx.
$$
This shows that $\|u\|_2^2 \geq c^*_N$ and the last assertion follows from \eqref{energy}.
\end{proof}

We are now in a position to prove Theorem \ref{concentration}.
\begin{proof}[Proof of Theorem \ref{concentration}]
By Theorem \ref{thmmain}, there exist sequences $(c_n)_n \subset (c_N^\ast ,\infty )$ with $c_n \to c^*_N$  and $(u_n)_n \subset {\mathcal{M}}(c_n)$ such that  $E(u_n) = \Gamma(c_n)$. From (\ref{explosion}), we deduce that
\begin{align} \label{limit}
\int_{\R^N}|\Delta u_n|^2 \, dx \rightarrow \infty \,\, \text{as} \, \, n \rightarrow \infty 
\end{align}
and we therefore infer from Remark \ref{Rem:equivlapgrad} that
\begin{align}\label{limit1}
\frac{\int_{\R^N}|\nabla u_n|^2 \, dx }{\int_{\R^N}|\Delta u_n|^2 \, dx}
\le \frac{ c_n^{1/2}}{\left(\int_{\R^N}|\Delta u_n|^2 \, dx\right)^{1/2}}
 \rightarrow 0 \, \, \text{as} \, \/
n \rightarrow \infty.
\end{align}
Since $Q(u_n)=0$, we know that
\begin{align}\label{limit2}
\frac{\int_{\R^N}| u_n|^{2+ \frac 8N} \, dx }{\gamma \int_{\R^N}|\Delta u_n|^2 \, dx} =\frac {N+4}{N}. 
\end{align}
Next, we introduce $\tilde{u}_n(x):= \eps_n^{\frac N2}u_n(\eps_n x)$ where we chose
\begin{align}\label{def}
\eps_n^{-4}=\gamma \int_{\R^N}|\Delta u_n|^2 \, dx \to \infty \ \text{as} \ n \to \infty.
\end{align}
Direct calculations show that  $\|\tilde{u}_n\|_2^2 =\|u_n\|_2^2=c_n, \|\Delta \tilde{u}_n\|_2^2 = \frac{1}{\gamma}$, and
\begin{align}\label{norm}
\int_{\R^N}|\tilde{u}_n|^{2 + \frac 8N}\, dx =\frac {N+4}{N}.
\end{align}
Using \cite[Lemma I.1]{Li2}, we deduce from \eqref{norm} that there exist $\delta>0$ and a sequence $(y_n)_n \subset \R^N$ such that, for some $R>0$,
\begin{align}\label{neq}
\int_{B_R(y_n)}|\tilde{u}_n|^2 \, dx \geq \delta.
\end{align}
Thus, defining
\begin{align} \label{defvn}
v_n(x):=\tilde{u}_n(x+y_n)=\eps_n^{\frac N2}u_n(\eps_n x+ \eps_n y_n),
\end{align}
it follows from \eqref{neq} that there exists $v\ne 0$ such that $v_n\rightharpoonup v$ in $H^2(\R^N)$. Since $u_n$ solves
$$
\gamma \Delta^2 u_n - \Delta u_n +  \alpha_n u_n = |u_n|^{\frac 8N}u_n,
$$
where the Lagrange multiplier is given by
$$
\alpha_n=\frac{1}{c_n}\left( -\gamma \int_{\R^N}|\Delta u_n|^2 \, dx  - \int_{\R^N} |\nabla u_n|^2 \, dx  +\int_{\R^N}|u_n|^{2 + \frac 8N}\, dx\right),
$$
$v_n$ satisfies
$$
\gamma \Delta^2 v_n - \eps_n ^2\Delta v_n+ \eps_n^4 \alpha_n v_n = |v_n|^{\frac 8N}v_n.
$$
Combining \eqref{limit}, \eqref{limit1} and \eqref{limit2}, we deduce that
\begin{equation}\label{explosionbis}
\eps_n^4 \alpha_n \rightarrow \frac{4}{c^*_NN} \, \, \text{as}\, \, n \rightarrow \infty.
\end{equation}
Since $v_n\rightharpoonup v$ in $H^2(\R^N)$ as $n \rightarrow \infty$, $v$ solves
\begin{align}\label{v}
\gamma \Delta^2 v + \frac{4}{c^*_NN} v=|v|^{\frac 8N}v.
\end{align}
Now setting
$$ w_n(x):=\left( \frac{c^*_N N}{4}\right)^{\frac N8}v_n\left(\left( \frac{c^*_NN}{4}\right)^{\frac 14} x\right) \mbox{ and }
u(x):=\left( \frac{c^*_NN}{4}\right)^{\frac N8}v\left(\left( \frac{c^*_NN}{4}\right)^{\frac 14} x\right),$$
it is easily seen that $w_n \rightharpoonup u$ in $H^2(\R^N)$ as $n \to \infty,$ and $\|w_n\|_2^2=\|v_n\|_2^2 = c_n$.
Moreover it follows from \eqref{v} that $u$ is solution to \eqref{nls} and thus by Lemma \ref{ground},
 $ \|u\|_2^2 \geq c_N^*.$
On the other hand, since $w_n \rightharpoonup u$ in $H^2(\R^N)$ as $n \to \infty$, we have
$
\|u\|_2^2 \leq \liminf_{n \to \infty} \|w_n\|_2^2 =c^*_N
$ and thus we obtain that $\|u\|_2^2=c^*_N$. By Lemma \ref{ground}  $u$ is a least energy solution to \eqref{nls}. Since $\|u\|_2^2=c^*_N, \|w_n\|_2^2=c_n \rightarrow c^*_N$ as $n \rightarrow \infty$, and $w_n \rightharpoonup u$ in $H^2(\R^N)$ as $n \to \infty,$ it follows that $ w_n \to u  \ \text{in} \ L^2(\R^N) \ \text{as} \ n \rightarrow \infty.$
Now from the definition \eqref{defvn}, and by the interpolation inequality in Lebesgue space, there holds for $2 \leq q < 4^\ast$,
$$
\left(\frac{\eps_n^4 c^*_NN}{4}\right)^{\frac N8}u_n\left(\left(\frac{\eps_n^4 c^*_NN}{4}\right)^{\frac 14} x+ \eps_n y_n\right)
\rightarrow u \ \text{in} \ L^q(\R^N) \ \text{as} \ n \rightarrow \infty.
$$
This  completes the proof.
\end{proof}

\section{Positive and sign-changing solutions, proof of Theorems \ref{thm10} and \ref{thm11} }\label{special}
In this section, we prove Theorem \ref{thm10} and Theorem \ref{thm11}. Theorem \ref{thmmain} gives the existence of a ground state for $c\in (c_0,c_{N,\sigma})$.
To show that when $\sigma \in \N$, one of them is radial we make use of the  Fourier rearrangement, introduced in \cite{BoLe}, that we now recall. For $u \in L^2(\R^N)$, let $u^{\sharp}$ be the Fourier rearrangement of $u$ defined by
$$
u^{\sharp}:=\mathcal{F}^{-1}((\mathcal{F}u)^{\ast}),
$$
where $\mathcal{F}$ and $\mathcal{F}^{-1}$ denote respectively the Fourier transform and the inverse Fourier transform, and $f^{\ast}$ stands for the Schwarz rearrangement of $f$. Notice that $u^{\sharp}$ is radial and  $\|u^{\sharp}\|_2=\|u\|_2$. Moreover, we recall that \cite[Lemma A.1]{BoLe} implies
\begin{equation}\label{rearrangement}
\|\Delta u^{\sharp}\|_2 \leq \|\Delta u\|_2, \quad \|\nabla u^{\sharp}\|_2  \leq \|\nabla u\|_2 \quad \mbox{and} \quad  \|u^{\sharp}\|_{2\sigma+2}  \geq \|u\|_{2\sigma + 2}.
\end{equation}
\begin{proof}[Proof of Theorem \ref{thm10}]
Let $c\in (c_0,c_{N,\sigma})$
and let $u_c$ be a ground state associated to $\Gamma(c)$.  From \eqref{rearrangement}, we obtain that $Q(u_c^{\sharp}) \leq Q(u_c)=0$. Hence by Lemma \ref{unique}, there exists a $\lambda \in(0,1]$ such that $Q((u_c^{\sharp})_{\lambda}) =0$. Observe that
\begin{align*}
\Gamma(c) \leq E((u_c^{\sharp})_{\lambda})
&=E((u_c^{\sharp})_{\lambda}) - \frac{2}{\sigma N} Q((u_c^{\sharp})_{\lambda}) \\
&=\lambda^2\dfrac{\sigma N-4}{2\sigma N} \gamma\int_{\R^N}|\Delta u_c^{\sharp}|^2\, dx +\lambda \dfrac{\sigma N -2}{2\sigma N} \int_{\R^N}|\nabla u_c^{\sharp}|^2\, dx\\
&\le\lambda^2\dfrac{\sigma N-4}{2\sigma N} \gamma\int_{\R^N}|\Delta u_c|^2\, dx +\lambda \dfrac{\sigma N -2}{2\sigma N} \int_{\R^N}|\nabla u_c|^2\, dx\\
& \leq E(u_c)- \frac{2}{\sigma N} Q(u_c)=\Gamma(c),
\end{align*}
whence $\lambda=1$ and $E(u_c^{\sharp})=\Gamma(c)$. Therefore, $u_c^{\sharp}$ is also a ground state solution to \eqref{Pc}. It remains to prove that $u_c^{\sharp}$ is sign-changing. Associated to $u_c^{\sharp}$  there exists a Lagrange multiplier $\alpha_c \in \R$ so that
$$
\gamma \Delta^2 {u}_c^{\sharp} - \Delta {u}_c^{\sharp}+ {\alpha}_c {u}_c^{\sharp}=|{u}_c^{\sharp}|^{2\sigma}{u}^{\sharp}_c.
$$
When $4 < \sigma N <4^*$, we deduce from (\ref{nonexe2}) and (\ref{divergence}) that $\alpha_c \to + \infty$ as $c \to 0$ whereas the same conclusion holds when $\sigma N =4$, as shown by  (\ref{explosion}), when  $c_n \to {c^*_N}^+$. In particular, this implies that $2\sqrt{\gamma \alpha_c}>1$ when $c$ is sufficiently small (or $c_n> {c^*_N}^+$ sufficiently close to ${c^*_N}^+$ when $\sigma N=4$). At this point, using \cite[Theorem 3.7]{BoCaMoNa}, we deduce that ${u}_c^{\sharp}$ is sign-changing for $c>0$ small enough.
\end{proof}

\begin{proof}[Proof of Theorem \ref{thm11}]
We borrow here an idea from \cite{BoCaMoNa}. We consider the modified minimization problem 
\begin{align}\label{appmin}
{\Gamma}^+(c):= \inf_{u \in {\mathcal{M}^+}_{rad}(c)} {E}^+(u),
\end{align}
where
\begin{align*}
{E}^+(u):&=\frac{\gamma}{2}\int_{\R^N}|\Delta u|^2\, dx+\frac{1}{2}\int_{\R^N}|\nabla u|^2\, dx-\frac{1}{2\sigma+2}\int_{\R^N}|u^+|^{2\sigma+2}\, dx, \\
{Q}^+(u):&=\gamma  \int_{\R^N}|\Delta u|^2\, dx+\dfrac{1}{2}\int_{\R^N}|\nabla u|^2\, dx-\frac{\sigma N}{2(2\sigma+2)}\int_{\R^N}|u^+|^{2\sigma+2}\, dx,
\end{align*}
and 
$$ \mathcal{M}^+_{rad}(c) : = \{u \in S(c) : {Q}^+(u) =0\}\cap H^2_{rad}(\R^N).$$
It is straightforward to check that the equivalent of Theorem \ref{radN34} holds for the problem
\begin{equation}\label{Pc+}\tag{$P_c^+$} \gamma \Delta^2 u - \Delta u +  \alpha u = |u^+|^{2 \sigma}u^+  \quad \mbox{ with } \quad  \int_{\R^N}|u|^2 dx = c.
\end{equation}
Namely, if $N\le 4$ with $4\le\sigma N$, there exists a radial ground state solution ${u}_c \in H^2(\R^N)\backslash \{0\}$ to \eqref{Pc+} for any $c>c_0$. Observe also that the associated Lagrange multiplier $\alpha^+_c$ is positive for every $c>c_0$. 

We claim that $\alpha^+_c\to 0$ as $c\to \infty$. Indeed, we compute 
\begin{equation}\label{al}
\alpha^+_c =\frac{1}{c} \left(- 2{E}^+(\bar{u}_c) + \frac{\sigma}{\sigma +1} \int_{\R^N}|{u}^+_c|^{2\sigma+2}\, dx \right)
 \leq  \frac{\sigma}{c(\sigma +1)} \int_{\R^N}|{u}^+_c|^{2\sigma+2}\, dx
\end{equation}
so that the claim will be proven if we show that 
\begin{equation}\label{limsupalphac}
\limsup_{c\to\infty}  \int_{\R^N}|{u}^+_c|^{2\sigma+2}\, dx<\infty. 
\end{equation}
Since $\Gamma(c)\ge 0$, we infer that
\begin{equation}\label{ajout4}
\int_{\R^N}|{u}^+_c|^{2\sigma+2}\, dx \leq (\sigma +1) \left( \gamma \int_{\R^N}|\Delta u^+_c|^2\, dx+\int_{\R^N}|\nabla u^+_c|^2\, dx  \right).
\end{equation}%
On the other hand, adapting the proof of Lemma \ref{lemgammadecr}, we can show that $\gamma^+(c)$ is a nonincreasing function. Thus, we deduce from (\ref{relEQ1}),  that 
\begin{equation}\label{ajout5}
\gamma\dfrac{\sigma N-4}{2\sigma N}\int_{\R^N}|\Delta u^+_c|^2\, dx +\dfrac{\sigma N -2}{2\sigma N}\int_{\R^N}|\nabla u^+_c|^2\, dx, \quad \mbox{remain bounded.} 
\end{equation}
If $\sigma N >4$, the combination of \eqref{ajout4} and \eqref{ajout5} shows immediately that \eqref{limsupalphac} holds. If $\sigma N =4$, we first deduce from \eqref{ajout5} that the quantity $\int_{\R^N}|\nabla u^+_c|^2\, dx$ remains bounded and then Lemma \ref{boundcritical} permits to conclude that it is also the case for $\int_{\R^N}|\Delta u^+_c|^2\, dx$. Thus \eqref{limsupalphac} also holds when $\sigma N =4$.

Now using the fact that $\alpha^+_c > 0$ is small when $c >c_0$ is sufficiently large, we may rewrite the equation in \eqref{Pc+} as a system
\begin{align*}
     \begin{cases}
     & - \gamma \Delta {u}_c + \lambda_1 {u}_c ={v}_c,\\
     & - \Delta {v}_c +  \displaystyle \frac{\lambda_2}{\gamma} {v}_c=|\bar{u}_c^+|^{2\sigma}\bar{u}_c^+,
     \end{cases}
\end{align*}
where $\lambda_1, \lambda_2 \geq 0$ satisfy $\lambda_1 \lambda_2= \gamma \alpha^+_c$, and $\lambda_1 + \lambda_2=1$.
It is then standard, by the strong maximum principle, to deduce that ${u}_c >0$ and in particular ${u}_c $ satisfies (\ref{4nls}). 
\end{proof}

\begin{remark}
Observe that, when $N=1,2$ and $ 4 \leq \sigma N < \infty$ or $4/3\leq \sigma \leq 2$ and $N=3$ or $N=4$ and $\sigma =1$, one can work directly in $${\mathcal{M}^+}(c) : = \{u \in S(c) : {Q}^+(u) =0\},$$
and prove that any solution associated to ${\Gamma}^+(c):= \inf_{u \in {\mathcal{M}^+}(c)} {E}^+(u)$ is radially symmetric and positive. Indeed, under our assumptions on $(N, \sigma)$, the conclusions of Lemma \ref{sign-la} are available and thus the analogue of Theorem \ref{thmmain}  yields the existence of a solution associated to $\gamma^+(c)$  for any $c>c_0$. Proceeding as in the proof of Theorem \ref{thm11} one can show that this solution is positive. Now to show that the solution is radially symmetric around some point, we set 
$$
f(u, v):= (\frac {1}{4\gamma}- \alpha^+_c ) u - \frac {1}{2 \gamma} v + |u|^{2 \sigma} u, \quad
g(u, v):= v- \frac 12 u.
$$
Then, (\ref{4nls}) is equivalent to the cooperative  system
\begin{align*}
     \begin{cases}
     & \gamma \Delta u + g(u, v) =0,\\
     & \Delta v +  f(u, v)=0.
     \end{cases}
\end{align*}
Since we are in the setting of Busca-Sirakov \cite{BuSi}, we can apply \cite[Theorem 2]{BuSi}, to deduce that any positive solution is radially symmetric around some point.
\end{remark}

\section{The Dispersive equation, proof of Theorems \ref{globex} and \ref{unstable}}\label{dispersiveeq}
This section is devoted to the dynamics of the solutions $u=u(t,x)$ to the dispersive equation \eqref{4nlsdis}.
First, we exhibit a class of initial data for which solutions to \eqref{4nlsdis} exists globally in time. We then discuss the instability of the standing waves associated to radial ground state solutions. \medskip 

We start by recalling the local well-posedness of the solutions to \eqref{4nlsdis} and a blow-up alternative due to \cite{Pa},
\begin{lem}(\cite[Proposition 4.1]{Pa}) \label{alternative}
Let $ 0 <\sigma N < 4^*$. For any $u_0\in H^2 (\R^N)$, there exist $T>0$ and a unique solution  $u \in C([0,T); H^2 (\R^N))$ to \eqref{4nlsdis} with initial datum $u_0$ so that the mass and the energy are conserved along time, that is for any $t \in [0, T)$
$$
\|u(t)\|_{2}=\|u_0\|_{2}, \mbox{ and } \ E(u(t))=E(u_0).
$$
Moreover,  either $T=\infty,$ or $\lim_{t\rightarrow T^-}\|\Delta u(t) \|_2=\infty.$
\end{lem}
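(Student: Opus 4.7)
Since the statement is a standard local well-posedness and blow-up alternative result for the biharmonic NLS, the plan is to follow the classical Kato-type contraction scheme, adapted here to the propagator $e^{-it(\gamma\Delta^2-\Delta)}$. First I would record the Strichartz estimates for this fourth-order group, as established by Ben-Artzi--Koch--Saut \cite{BeKoSa}: these produce a family of admissible pairs $(q,r)$ for which the free evolution and the Duhamel term obey the usual space-time bounds. On $H^2(\R^N)$ the estimates are obtained simply by differentiating under the propagator, which commutes with $\Delta^2$ and $\Delta$.

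Next I would set up the Duhamel formulation
\[
u(t)=e^{-it(\gamma\Delta^2-\Delta)}u_0+i\int_0^t e^{-i(t-s)(\gamma\Delta^2-\Delta)}\bigl(|u|^{2\sigma}u\bigr)(s)\,ds,
\]
and treat the right-hand side as a map $\Phi$ on the ball
\[
X_T:=\bigl\{u\in C([0,T];H^2(\R^N))\cap L^q([0,T];W^{2,r}(\R^N)): \|u\|_{X_T}\le 2C\|u_0\|_{H^2}\bigr\},
\]
for a suitably chosen admissible pair and an appropriately small $T>0$. The main nonlinear estimate to verify is that $u\mapsto |u|^{2\sigma}u$ is locally Lipschitz from $H^2$ into a dual Strichartz space; this is where the hypothesis $\sigma N<4^\ast$ enters, via the Sobolev embedding $H^2(\R^N)\hookrightarrow L^{2\sigma+2}(\R^N)$ together with fractional chain rule or Moser-type estimates for $|z|^{2\sigma}z$. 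The contraction argument then produces a unique fixed point on a time interval $[0,T]$ whose length depends only on $\|u_0\|_{H^2}$.

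Having constructed a local solution, I would verify the conservation laws by a density/regularisation argument: for smooth data one pairs the equation with $\overline{\partial_t u}$ and with $\bar u$, takes real and imaginary parts to deduce $\frac{d}{dt}\|u\|_2^2=0$ and $\frac{d}{dt}E(u)=0$, then passes to the limit using the continuous dependence given by the contraction map. The step I expect to be the most delicate is the blow-up alternative: one defines the maximal existence time $T^\ast$ as the supremum of $T$ for which the fixed-point argument works, and supposes $T^\ast<\infty$ with $\liminf_{t\to T^{\ast-}}\|\Delta u(t)\|_2<\infty$. Mass conservation then gives $\limsup_{t\to T^{\ast-}}\|u(t)\|_{H^2}<\infty$ along a sequence $t_n\to T^{\ast-}$, so the local theory applied at initial time $t_n$ produces a solution on $[t_n,t_n+\tau]$ with $\tau$ independent of $n$; choosing $n$ large with $t_n+\tau>T^\ast$ extends $u$ beyond $T^\ast$, contradicting maximality. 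This yields the stated dichotomy and concludes the proof.
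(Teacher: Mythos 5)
The paper does not actually prove this lemma: it is imported verbatim from Pausader \cite[Proposition 4.1]{Pa}, so there is no in-paper argument to compare against. Your sketch is precisely the standard Strichartz/contraction route that the cited proof follows — Duhamel fixed point with a local existence time depending only on $\|u_0\|_{H^2}$ (which is legitimate here because $\sigma N<4^*$ keeps the problem subcritical), regularised conservation laws, and the continuation argument for the blow-up alternative — and it is correct in outline; the only small point worth making explicit is that passing from $\liminf\|\Delta u(t)\|_2<\infty$ and mass conservation to a bounded $H^2$ norm along a sequence uses the interpolation bound $\|\nabla u\|_2^2\le\|\Delta u\|_2\,\|u\|_2$.
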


\begin{proof} [Proof of Theorem \ref{globex}]
Let $c >c_0$ be arbitrary. First, observe that $\mathcal{O}_c \neq \emptyset$. 
Indeed, we know from Lemma \ref{Mnonvoid} that if $c>c_0$ then $\mathcal{M}(c) \neq \emptyset.$ Fixing $v \in \mathcal{M}(c)$, we have by definition that $E(v) \geq \Gamma(c) >0$ and $Q(v)=0$. From the scaling \eqref{functional}, we see that $E(v_{\lambda}) \to 0$ as $\lambda \to 0$. Note also that
$$ 
Q(v_{\lambda})=( \lambda^2 - \lambda^{\frac{\sigma N}{2}}) \gamma \int_{\R^N}|\Delta u|^2\, dx
 + \frac{\lambda - \lambda^{\frac{\sigma N}{2}}}{2}\int_{\R^N}|\nabla u|^2\, dx,$$
which proves that, for any $ 4 \leq \sigma N < 4^*$, $Q(v_{\lambda}) \to 0^+$ as $\lambda \to 0$. Thus, taking $\lambda >0$ small enough, we have that $v_{\lambda} \in \mathcal{O}_c$.

Let $u_0\in \mathcal{O}_c$  and denote by $u\in C([0,T);H^2 (\R^N))$ the solution to \eqref{4nlsdis} with initial datum $u_0$. We now prove that $u$ exists globally in time, i.e. $T= \infty$. Suppose by contradiction that $T < \infty$. From Lemma \ref{alternative}, we infer that
\begin{align} \label{blowup1}
\displaystyle\lim_{t\rightarrow T^-}\int_{\R^N}|\Delta u(t)|^2 \, dx =\infty.
\end{align}
Observe that $E(u(t))=E(u_0)$ for $0 \leq t<T$, and
\begin{equation}\label{conserve}
E(u(t))-\frac{2}{\sigma N} Q(u(t))=\gamma\dfrac{\sigma N-4}{2\sigma N}\int_{\R^N}|\Delta u|^2\, dx +\dfrac{\sigma N -2}{2\sigma N}\int_{\R^N}|\nabla u|^2\, dx.
\end{equation}
Thus, when $4 < \sigma N < 4^*$, we deduce from (\ref{blowup1}) that 
\begin{align} \label{blowup}
\lim_{t\rightarrow T^-} Q(u(t))=-\infty.
\end{align}
When $\sigma N =4$, using the fact that both the energy and the mass are conserved,  Lemma \ref{boundcritical} applies and shows
\begin{align*}
\displaystyle\lim_{t\rightarrow T^-}\int_{\R^N}|\nabla u(t)|^2 \, dx =\infty.
\end{align*}
Again we deduce from (\ref{conserve}) that (\ref{blowup}) holds.

By  continuity, we infer that there exists $t_0\in (0,T)$ such that $Q(u(t_0))=0$. Since $\|u(t_0)\|_2=\|u_0\|_2 = c$,  we have $E(u(t_0)) \geq \Gamma(c)$ by definition $\gamma$. This contradicts the fact that  $E(u(t_0))=E(u_0)<\Gamma(c)$. 
\end{proof}

Let us now prove  Theorem \ref{unstable}. To this aim, we first recall the localized virial identity introduced in \cite{BoLe}, namely
$$
M_{\varphi_R} [u]:= 2 \text{Im} \int_{\R^N}\bar{u} \nabla \varphi_R \nabla u\ dx,
$$
where $u \in H^2 (\R^N)$, $\varphi : \R^N \rightarrow \R$ is a radial function such that $\nabla^j \varphi \in L^\infty (\R^N)$ for $1\leq j \leq 6$,
$$
\varphi (r):=
\begin{cases}\dfrac{r^2}{2} & for\ r\leq 1 \\
 const. & for\ r\geq 10
\end{cases},
\ \varphi'' (r)\leq 1 \ for\ r\geq 0,
$$
and $\varphi_R (r):= R^2 \varphi (\dfrac{r}{R})$ for $R>0$. 

In \cite[Lemma 3.1]{BoLe}, it is proved, for $N \geq 2$, that if $u \in C([0,T); H^2 (\R^N))$ is the radial solution to \eqref{4nlsdis} with initial datum $u_0 \in H^2_{rad}(\R^N)$, then
\begin{align} \label{locvirial}
\begin{split}
\dfrac{d}{dt}M_{\varphi_R} [ u(t) ] &\leq 4 N\sigma E(u_0) - (2N\sigma -8)\gamma \|\Delta u(t)\|_2^2 - (2N\sigma -4 ) \|\nabla u(t)\|_2^2\\
&  +O\left(\dfrac{\|\nabla u(t)\|_2^2}{R^2}+ \dfrac{\|\nabla u(t)\|_2^\sigma}{R^{\sigma (N-1)}}+\dfrac{1}{R^2}  + \dfrac{1}{R^4}\right)\\
&= 8 Q(u(t)) +O\left(\dfrac{\|\nabla u(t)\|_2^2}{R^2}+ \dfrac{\|\nabla u(t)\|_2^\sigma}{R^{\sigma (N-1)}}+\dfrac{1}{R^2}+ \dfrac{1}{R^4} \right).
\end{split}
\end{align}

\begin{proof} [Proof of Theorem \ref{unstable}] Suppose that $u_c$ is a radial ground state solution, and define
$$
\Theta: =\{v\in H_{rad}^2(\R^N) \backslash \{0\}: E(v)<E(u_c),\ \|v \|_2 = \|u_c \|_2,\ Q(v)<0  \}.
$$
The set $\Theta$ contains elements arbitrarily close to $u_c$ in $H^2(\R^N)$. Indeed letting  $v_0:= (u_c)_\lambda$ we see from Lemma \ref{unique} that $v_0 \in \Theta$ if $\lambda >1$ and that  $v_0 \to u_c$ in $H_{rad}^2(\R^N)$ as $\lambda \to 1^+$. Let $v \in C([0, T); H_{rad}^2(\R^N))$ be the solution to \eqref{4nlsdis} with radial initial datum $v_0$ and $T\in (0,\infty]$ be the maximal existence time. To prove the theorem, we just need to show that $v(t)$ blows up in finite time. We divide the remaining arguments of the proof into four steps.\medskip \\
{\bf First Step --} We claim that there exists $\beta >0$ such that $Q(v(t)) \leq - \beta$ for any $t \in [0, T)$. Indeed, arguing as in the proof of Theorem \ref{globex}, we easily check that  $v(t)\in \Theta$ and in particular $Q(v(t))<0$ for any $t\in [0,T)$. Now setting $v:=v(t)$, in view of Lemma \ref{unique}, since $Q(v)<0$, there exists $\lambda^\ast<1$ such that $Q(v_{\lambda^\ast})=0$. Moreover the function $\lambda \mapsto E(v_{\lambda})$ is concave for $ \lambda \in [\lambda^\ast ,1]$, thus
$$
E(v_{\lambda^\ast})-E(v) \leq (\lambda^\ast -1) \dfrac{\partial E (u_\lambda)}{\partial \lambda}|_{\lambda=1}
=(\lambda^\ast -1) Q(v).
$$
Since $Q(v)<0$, $E(v)=E(v_0)$ and $v_{\lambda^\ast}\in \mathcal{M}_{rad}(c)$, we deduce that
\begin{align} \label{qvt}
Q(v)\leq (1-\lambda^\ast)Q(v) \leq E(v)-E(v_{\lambda^\ast})\leq E(v_0)-E(u_c)=:-\beta.
\end{align}
{\bf Second Step --} Suppose that $\sigma \leq 4$. We claim that there exists $\delta>0$ such that
\begin{equation}\label{thmsecondstep}
\dfrac{d}{dt}M_{\varphi_R} [v (t)]\leq - \delta \|\nabla v (t)\|_2^2 \ \text{for} \ t \in [0,T),
\end{equation}
and $t_1 \geq 0$ such that
\begin{equation}\label{initial}
M_{\varphi_R} [v (t)]<0  \ \text{for} \ t \geq t_1.
\end{equation}
To prove
\eqref{thmsecondstep}, we need to distinguish two cases.  \medskip \\
{\it Case 1:} Let
$$
T_1:=\{t \in [0, \infty): (\sigma N-2)\| \nabla v(t)\|_2^2 \leq  4 N \sigma E(v_0)\}.
$$
It follows from \eqref{locvirial} and the First Step that 
\begin{equation}\label{esti1}
\dfrac{d}{dt}M_{\varphi_R} [ v (t) ] \leq - 7 \beta \leq  - \delta ||\nabla v (t)||_2^2
\end{equation}
for some $\delta >0$ sufficiently small and $R >0$ sufficiently large. \medskip \\
{\it Case 2:} Set
$$
T_2:=[0, \infty)\backslash T_1=\{t \in [0, \infty): (\sigma N-2)\|\nabla v(t)\|_2^2 >  4N \sigma E(v_0)\}.
$$
Using the interpolation inequality (\ref{interpolation}), we infer from \eqref{locvirial} that
\begin{align*} \label{locvirial2}
\begin{split}
\dfrac{d}{dt}M_{\varphi_R} [ v (t) ] &\leq  - (N\sigma -2) \|\nabla v (t)\|_2^2 - \frac{(2N\sigma -8)\gamma}{||v_0||_2^2} \|\nabla v (t)\|_2^4   \\
& + O\left(\frac {1}{R^{4}}+ \dfrac{\|\nabla v (t)\|_2^2}{R^2}+ \dfrac{\|\nabla v (t)\|_2^\sigma}{R^{\sigma (N-1)}}+\dfrac{\mu}{R^2} \right) .
\end{split}
\end{align*}
Taking $R$ large enough and noticing that under our assumptions, we have $\sigma \leq 2$ if $N \sigma =4$ and $\sigma \leq 4$ if $\sigma N >4$, we deduce that
\begin{equation} \label{esti2}
\dfrac{d}{dt}M_{\varphi_R} [ v (t) ]  \leq -\dfrac{ (N \sigma  -2)}{2}   \|\nabla v (t)\|_2^2.
\end{equation}
Combining \eqref{esti1} and \eqref{esti2}, we conclude that there exists $\delta>0$ such that (\ref{thmsecondstep}) holds. Finally, since
$$ M_{\varphi_R} [ v (t) ] = M_{\varphi_R} [ v_0  ] + \int_{0}^{t_1} \dfrac{d}{ds}M_{\varphi_R} [ v (s) ] ds,$$
the inequality \eqref{initial} follows
from the estimate
$$ \big| \dfrac{d}{dt}M_{\varphi_R} [ v (t) ] \big| \geq \min \left\{ 7 \beta, \frac{(N \sigma -2)}{2}||\nabla v(t)||_2^2 \right\}.$$
\medskip \\
{\bf Third Step --} We now conclude that the solution $v(t)$ to \eqref{4nlsdis} with initial datum $v_0$ blows up in finite time when $\sigma \leq 4$. For that purpose, we adapt another argument from \cite{BoLe}. Suppose by contradiction that $T=\infty$. Then, integrating \eqref{thmsecondstep} on $[t_1 ,t]$, and taking  \eqref{initial} into account, we have that
$$
M_{\varphi_R} [ v (t) ] \leq  -\delta \int_{t_1}^t \|\nabla v (s)\|_2^2 ds.\
$$
Now using the Cauchy-Schwarz's inequality, we get from the definition of $M_{\varphi_R} [ v (t) ] $  that
$$
|M_{\varphi_R}[v (t)] | \leq 2 \|\nabla \varphi_R\|_{\infty} \|v(t)\|_2 \|\nabla v(t)\|_2 \leq C \|\nabla v (t)\|_2.
$$
Thus, for some $\tau >0$,
\begin{equation}\label{control}
M_{\varphi_R}[v (t)] \leq - \tau \int_{t_1}^t | M_{\varphi_R}[v (s)] |^2 ds.
\end{equation}
Setting $z(t):=\int_{t_1}^t |M_{\varphi_R}[v(s)] |^2 ds $, we obtain from \eqref{control} that
$
z'(t)\geq \tau^2 z(t)^2.
$
Integrating this equation, we deduce that $M_{\varphi_R}[v (t)]\rightarrow -\infty$ when $t$ tends to some finite time $t^\ast$. Therefore, the solution $v(t)$ cannot exist for all $t>0$. By the blow-up alternative recalled in Lemma \ref{alternative} this ends the proof of the theorem when $\sigma \leq 4$.\\

{\bf Fourth Step --} The case $\sigma>4$. First, observe that if $\|\nabla v (t)\|_2$ is unbounded for $t\in [0,T)$, then $\|\Delta v (t)\|_2$ is unbounded for $t\in [0,T)$. Therefore $v (t)$ blows up either in finite or infinite time. On the other hand, if $\|\nabla v (t)\|_2$ is bounded for $t\in [0,T)$, since $Q(v (t))<-\beta$ by the First Step, we can choose $R\geq 1$ sufficiently large to deduce from \eqref{locvirial} that
\begin{equation}
\label{lasteq}
\dfrac{d}{dt}M_{\varphi_R} [v (t)]\leq -4 \beta.
\end{equation}
As in \cite{BoLe}, suppose by contradiction that $T=\infty$. Then, there exists $t_1\geq 0$ such that 
$M_{\varphi_R} [v (t)] <0$ for all $t\geq t_1$. Using Cauchy-Schwarz's inequality and integrating \eqref{lasteq} between $[t_1 ,t]$, we find
$$-\|\nabla \varphi_R \|_{\infty} \|v (t) \|_2^{3/2} \|\Delta v (t)\|_2^{1/2}\leq M_{\varphi_R}[v (t)]\leq - 4a (t-t_1).$$
Therefore, we see that either $v (t)$ blows up in finite time or that
$$\|\Delta v (t)\|_2^2 \geq C (t-t_1)^2,\ \mbox{for all } t\geq t_1 .$$
This completes the proof.
\end{proof}

\section{Appendix}\label{Appendix}
The following lemma is proved in \cite{BoCaGoJe}.
\begin{lem}\label{Pohozaevs}
Let $0 < \sigma N < 4^*$. If $v \in H^2(\R^N)$ is a weak solution of 
\begin{equation}\label{equationgeneral}
\gamma \Delta^2 v - \mu \Delta v +  \omega v = d |v|^{2 \sigma}v
\end{equation}
where $\gamma, \mu, \omega, d$ are constants, then $v$ satisfies $I(v) = P(v)= Q(v)= 0$ where
$$I(u)=\gamma\|\Delta u\|_2^2+\mu\|\nabla u\|_2^2+\omega \|u\|_2^2  - d\|u\|_{2\sigma +2}^{2\sigma+2}. $$
$$
P(u)=\frac{(N-4)\gamma}{2}\|\Delta u\|_2^2+ \frac{(N-2)\mu}{2}\|\nabla u\|_2^2+ \frac{N \omega}{2} \|u\|_2^2  - \frac{dN}{2 \sigma +2}\|u\|_{2\sigma +2}^{2\sigma+2},
$$
and
$$Q(u)=\gamma\|\Delta u\|_2^2+\frac{\mu}{2}\|\nabla u\|_2^2-\frac{d\sigma N}{2(2\sigma+2)}\|u\|_{2\sigma +2}^{2\sigma+2}.$$
\end{lem}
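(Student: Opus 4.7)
My plan is to establish two of the three identities directly from the weak form of \eqref{equationgeneral} and to derive the third as an algebraic consequence. The first identity, $I(v)=0$, is immediate: since $v\in H^2(\R^N)$ is a weak solution, the weak form holds against every $\phi\in H^2(\R^N)$, and because $2\sigma+2$ is strictly below the critical Sobolev exponent for $H^2(\R^N)\hookrightarrow L^p(\R^N)$ (this is ensured by the assumption $\sigma N<4^*$), the function $|v|^{2\sigma}v$ belongs to the dual of $H^2(\R^N)$. Choosing $\phi=v$ then immediately yields $I(v)=0$.

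The second identity, $P(v)=0$, is where the genuine work lies. Standard elliptic regularity applied to $\gamma\Delta^2 v-\mu\Delta v=-\omega v+d|v|^{2\sigma}v$ (viewed as a fourth-order equation with right-hand side in suitable $L^p$) gives $v\in W^{4,p}_{\mathrm{loc}}(\R^N)$ for every $p<\infty$, so pointwise manipulations involving $x\cdot\nabla v$ are legitimate. To handle the absence of compact support, I would introduce a cut-off $\chi_R\in C_c^\infty(\R^N)$ equal to $1$ on $B_R$ and supported in $B_{2R}$, test the equation against $\chi_R(x\cdot\nabla v)$, integrate by parts, and let $R\to\infty$. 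The inner computations produce the four classical building blocks
\begin{align*}
\int\Delta^2 v\,(x\cdot\nabla v)\,dx &=\tfrac{N-4}{2}\int|\Delta v|^2\,dx, & -\int\Delta v\,(x\cdot\nabla v)\,dx &=\tfrac{N-2}{2}\int|\nabla v|^2\,dx,\\
\int v\,(x\cdot\nabla v)\,dx &=-\tfrac{N}{2}\int v^2\,dx, & \int|v|^{2\sigma}v\,(x\cdot\nabla v)\,dx &=-\tfrac{N}{2\sigma+2}\int|v|^{2\sigma+2}\,dx,
\end{align*}
whose combination with the coefficients $\gamma,\mu,\omega,d$ reproduces $P(v)$. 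I expect the main technical obstacle to be the control of the cut-off remainders: on the annulus $B_{2R}\setminus B_R$ one has $|\nabla^j\chi_R|=O(R^{-j})$, and since the global $H^2$ and $L^{2\sigma+2}$ norms of $v$ are finite, Cauchy--Schwarz and H\"older inequalities force each remainder to vanish as $R\to\infty$. The careful bookkeeping of the biharmonic boundary terms, where two derivatives of $\chi_R$ may appear, is the delicate point.

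Finally, a coefficient-by-coefficient verification shows the purely algebraic identity
\begin{equation*}
Q(u)=\tfrac{N}{4}\,I(u)-\tfrac{1}{2}\,P(u)\qquad\text{for every }u\in H^2(\R^N);
\end{equation*}
in particular the $\omega\|u\|_2^2$ contribution drops out because $\tfrac{N}{4}\omega-\tfrac{1}{2}\cdot\tfrac{N\omega}{2}=0$, while the coefficients of $\|\Delta u\|_2^2$, $\|\nabla u\|_2^2$ and $\|u\|_{2\sigma+2}^{2\sigma+2}$ match directly (for the last one, $-\tfrac{Nd}{4}+\tfrac{Nd}{2(2\sigma+2)}=-\tfrac{Nd\sigma}{2(2\sigma+2)}$). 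Consequently, $I(v)=0$ together with $P(v)=0$ forces $Q(v)=0$, which completes the plan.
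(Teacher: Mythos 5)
The paper does not actually prove this lemma; it simply cites \cite{BoCaGoJe} in the Appendix, so your proposal is being measured against the standard Pohozaev-type argument, which is indeed the route you outline: test with $v$ to get $I(v)=0$, test with a cut-off multiple of $x\cdot\nabla v$ (after a local elliptic bootstrap) to get $P(v)=0$, and obtain $Q(v)=0$ as a linear combination. Your algebraic identity $Q=\tfrac{N}{4}I-\tfrac12 P$ is verified correctly, the subcriticality remark justifying the choice $\phi=v$ is right, and your description of how the cut-off remainders die (the factor $|x|\sim R$ against $|\nabla^j\chi_R|=O(R^{-j})$, with the surviving integrands being tails of fixed $L^1$ quantities) is the correct mechanism.

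There is, however, a concrete error: two of your four ``classical building blocks'' carry the wrong sign, and as written the four are mutually inconsistent, so their combination does \emph{not} reproduce $P(v)=0$. The correct identities are
$$\int_{\R^N}\Delta^2 v\,(x\cdot\nabla v)\,dx=-\frac{N-4}{2}\int_{\R^N}|\Delta v|^2\,dx,\qquad -\int_{\R^N}\Delta v\,(x\cdot\nabla v)\,dx=-\frac{N-2}{2}\int_{\R^N}|\nabla v|^2\,dx,$$
as follows from $\Delta(x\cdot\nabla v)=2\Delta v+x\cdot\nabla\Delta v$ together with $\int x\cdot\nabla f\,dx=-N\int f\,dx$; a quick sanity check in dimension $N=1$ gives $-\int v''\,x v'\,dx=\tfrac12\int (v')^2\,dx>0$, whereas your formula predicts the negative value $\tfrac{N-2}{2}\int(v')^2\,dx=-\tfrac12\int(v')^2\,dx$. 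Your last two blocks are correct. Plugging your stated values into the tested equation yields
$$\frac{(N-4)\gamma}{2}\|\Delta v\|_2^2+\frac{(N-2)\mu}{2}\|\nabla v\|_2^2-\frac{N\omega}{2}\|v\|_2^2+\frac{dN}{2\sigma+2}\|v\|_{2\sigma+2}^{2\sigma+2}=0,$$
which has the signs of the $\omega$ and $d$ terms flipped relative to $P(v)=0$ and is false in general. Once all four blocks are given their consistent signs, the tested identity becomes exactly $-P(v)=0$ and the rest of your plan goes through.
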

\begin{remark}\label{rem:pohoX}
Observe that the previous lemma also holds if we assume that $v\in X$ ($X$ being defined in \eqref{defX}) and $\omega=0$.
\end{remark}

\begin{prop} \label{finitemass}
Let $N\geq 5$ and $ \sigma \in (\frac{2}{N-2}, \frac{4}{N-4})$. Then any solution $u\in X$ to \eqref{4NLSalpha=0} belongs to $L^2(\R^N)$.
\end{prop}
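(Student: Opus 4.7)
My plan is to bootstrap the $L^p$-integrability of $u$ downward until reaching some $L^{p_0}(\R^N)$ with $p_0<2$; combined with $u\in L^{2^\ast}(\R^N)$ -- which follows from $u\in X$ through the Sobolev embedding $D^{1,2}(\R^N)\hookrightarrow L^{2^\ast}(\R^N)$ valid for $N\geq 5$ -- this yields $u\in L^2(\R^N)$ by interpolation, since $p_0<2<2^\ast$.

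The bootstrap is driven by the factorization $\gamma\Delta^2-\Delta=-\Delta(I-\gamma\Delta)$. Setting $v:=u-\gamma\Delta u$, the equation \eqref{4NLSalpha=0} becomes $-\Delta v=f$ with $f:=|u|^{2\sigma}u$. The Hardy--Littlewood--Sobolev inequality then yields $\|v\|_q\leq C\|f\|_p$ whenever $1<p<N/2$ and $1/q=1/p-2/N$. Since the kernel $K$ of $(I-\gamma\Delta)^{-1}$ lies in $L^1(\R^N)$, writing $u=K\ast v$ gives $\|u\|_q\leq C\|v\|_q$. Combining, whenever $u\in L^r(\R^N)$ with $2\sigma+1<r<N(2\sigma+1)/2$, I obtain $u\in L^{T(r)}(\R^N)$ with
\[
\frac{1}{T(r)}=\frac{2\sigma+1}{r}-\frac{2}{N}.
\]
The map $T$ is strictly increasing in $r$, has a unique fixed point $r^\ast=\sigma N$, and satisfies $T(2\sigma+1)=N/(N-2)$.

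Starting from $r_0:=2^\ast$, the hypothesis $\sigma>2/(N-2)$ yields $r_0<r^\ast$, so the iterates $r_{k+1}:=T(r_k)$ form a strictly decreasing sequence as long as they remain above $2\sigma+1$. Since $T$ has no fixed point in $(2\sigma+1,r^\ast)$, the sequence must leave this interval after finitely many steps. Using $T(2\sigma+1)=N/(N-2)<2$ for $N\geq 5$, two scenarios arise: either some iterate $r_{k_0+1}=T(r_{k_0})$ already lies below $2$, in which case we are done; or the iteration ``overshoots'' with $r_{k_0+1}<2\sigma+1$, in which case I exploit $u\in L^{r_{k_0}}\cap L^{r_{k_0+1}}$ and interpolation to pick an auxiliary exponent $\rho\in(2\sigma+1,r_{k_0})$ arbitrarily close to $2\sigma+1$, then apply the improvement step once more to produce $u\in L^{T(\rho)}(\R^N)$ with $T(\rho)$ arbitrarily close to $N/(N-2)<2$.

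In both scenarios $u\in L^{p_0}(\R^N)$ for some $p_0<2$, and interpolation with $u\in L^{2^\ast}(\R^N)$ yields $u\in L^2(\R^N)$. The main subtlety lies in the analysis near the critical endpoint $r=2\sigma+1$: one must guarantee that the iteration, possibly aided by the interpolation step, genuinely produces an exponent below $2$. This is ensured precisely by the hypothesis $\sigma>2/(N-2)$ (which places $r^\ast$ strictly above $2^\ast$ so that the iteration is nontrivially decreasing) together with $N\geq 5$ (which makes $T(2\sigma+1)=N/(N-2)<2$).
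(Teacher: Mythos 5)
Your strategy is genuinely different from the paper's. The paper tests \eqref{4NLSalpha=0} against $\varphi^2 u$ with $\varphi$ growing like $|x|$ outside a large ball, and closes the resulting weighted estimate with Hardy's inequalities to bound $\int_{|x|\ge 2R}|\varphi u|^2/|x|^2\,dx$ uniformly, whence $u\in L^2(\R^N)$ by Fatou; your route instead inverts $-\Delta(I-\gamma\Delta)$ and runs a downward Lebesgue-exponent bootstrap, much in the spirit of Claim 1 in the paper's proof of Proposition \ref{sharpdecay} (there used in dimensions $3$ and $4$ for a different purpose). The skeleton of your iteration is sound: $T$ is increasing with unique fixed point $\sigma N$; below that fixed point the iteration is strictly decreasing, and in fact $1/r_{k+1}-1/r_k=2\sigma/r_k-2/N\ge 2\sigma/r_0-2/N>0$, so the exit from $(2\sigma+1,\infty)$ occurs after finitely many steps; and the endpoint value $T(2\sigma+1)=N/(N-2)<2$ for $N\ge 5$, combined with your interpolation trick near the exit, does produce an exponent strictly below $2$.

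The gap is at the initialization. To apply Hardy--Littlewood--Sobolev at the first step you need $f=|u|^{2\sigma}u\in L^p$ with $p=r_0/(2\sigma+1)>1$, i.e.\ $r_0>2\sigma+1$; but your choice $r_0=2^\ast=2N/(N-2)$ satisfies this only when $\sigma<(N+2)/(2(N-2))$. For $N=5$ and, say, $\sigma=2\in(\tfrac23,4)$ one has $2^\ast=\tfrac{10}{3}<5=2\sigma+1$, so the very first step fails on a substantial part of the admissible range of $\sigma$. The repair is cheap: start instead from $r_0=2\sigma+2$, which is legitimate because $X\hookrightarrow L^{2\sigma+2}(\R^N)$ by \eqref{G-N-H1-ineq2}, and which satisfies $2\sigma+1<r_0<\sigma N$ precisely because $\sigma>2/(N-2)$ --- so the hypothesis enters exactly where it should. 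You should also justify in passing that $v$ coincides with the Riesz potential of $f$ (no harmonic correction, since $v$ lies in a sum of Lebesgue spaces and harmonic tempered distributions are polynomials) and that $u=K\ast v$ with $K$ the $L^1$ Bessel kernel of $(I-\gamma\Delta)^{-1}$. With these repairs your argument goes through and gives an alternative, more Fourier-analytic proof of the proposition.
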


\begin{proof}
We can assume without loss of generality that $\gamma =1$.
The main idea of the proof consists in testing \eqref{4NLSalpha=0} with a function $\varphi^2 u$ where,  roughly,  $\varphi(x)=1+|x|$.\smallskip

Let $\psi\in C^\infty (\R^N)$ with $supp \ \psi \subset \R^N \backslash B_R(0)$ be such that $\psi(x) = 1$ for $|x| \geq 2R$. Here $R>0$ is a constant to be determined later. For $R_1>2R$, we define $\varphi:=\psi h_{R_1}$, where $h_{R_1}\in C^2 (\R^N)$ satisfies 
\[ h_{R_1} (x) =
\left\{
\begin{aligned}
&|x|  &\text{ for }2R \leq |x|< R_1 ,\\
& R_1 \left(1+  th \left( \frac{|x|-R_1}{R_1} \right)\right)  &\text{ for }|x|\geq R_1.
\end{aligned}
\right.
\]
Note that in the definition of $h_{R_1}$, $th$ denotes the hyperbolic tangent. Now let
\begin{align} \label{lamax}
\lambda_1(R_1):=\sup_{|x| \geq 2R}\dfrac{|x||\nabla \varphi (x)|}{\varphi (x)},
\quad \lambda_2(R_1):=\sup_{|x| \geq 2R}\dfrac{|x||\Delta \varphi (x)|}{\varphi (x)}.
\end{align}
From the definition of $\varphi$ it readily follows that
$\lambda_1(R_1)=1$ for all $R_1>0$ and that $\lambda_2:=\lambda_2(R_1) \to 0$ as $ R_1 \to \infty.$

As a preliminary step we derive some pointwise  identities. Simple calculations yield
\begin{align*}
\Delta (\varphi^2 u) &=\varphi^2 \Delta u + 4\varphi \nabla u \nabla \varphi +u (2\varphi \Delta \varphi
+2 |\nabla\varphi|^2),
\end{align*}
and
\begin{align*}
(\Delta (\varphi u))^2 &= \varphi^2 (\Delta u)^2 +4 |\nabla \varphi \nabla u|^2+ u^2 (\Delta \varphi)^2 +
4  \varphi \Delta u \nabla \varphi \nabla u \\
& + 2 \varphi u \Delta u \Delta \varphi + 4 u\Delta \varphi \nabla \varphi \nabla u .
\end{align*}
Combining the two previous identities, we obtain
\begin{align}
\label{fdecae1}
(\Delta (\varphi u))^2 &= \Delta u \Delta (\varphi u^2)\nonumber\\
&+ 4 |\nabla \varphi \nabla u|^2+ u^2 (\Delta \varphi)^2   + 4 \nabla \varphi \nabla u u\Delta \varphi-2 u\Delta u |\nabla\varphi|^2.
\end{align}
We shall also use the relation
\begin{equation}
\label{fdecae2}
|\nabla (\varphi u )|^2= \nabla u \nabla (\varphi^2 u)+ |\nabla \varphi|^2 u^2.
\end{equation}
Now testing \eqref{4NLSalpha=0} with $\varphi^2 u$ and using \eqref{fdecae1}-\eqref{fdecae2}, we infer that
\begin{align} \label{testing}
\begin{split}
\int_{\R^N}  |\Delta (\varphi u)|^2+ |\nabla (\varphi u)|^2 \, dx & = \int_{\R^N} |\varphi u|^2 |u|^{2\sigma} + \int_{\R^N} |\nabla \varphi|^2 |u|^2 \, dx  \\
&+ 4\int_{\R^N} |\nabla \varphi \nabla u|^2 \, dx + \int_{\R^N} |u \Delta \varphi|^2 \, dx  \\
&+ 4 \int_{\R^N} u \Delta \varphi \nabla \varphi \nabla u \, dx- 2 \int_{\R^N} u\Delta u |\nabla\varphi|^2 \, dx.
\end{split}
\end{align}
Recalling H\"older inequality and taking into account (\ref{G-N-H1-ineq2}), we obtain
\begin{align*}
\int_{\R^N} |\varphi u|^2 |u|^{2\sigma}\, dx &\leq \left(\int_{|x| \geq R} |u|^{2\sigma +2}\, dx \right)^{\frac{\sigma}{\sigma+1}}
\left(\int_{\R^N} |\varphi u|^{2\sigma +2}\, dx  \right)^{\frac{1}{\sigma +1}}\\
&\leq  C \left(\int_{|x|\geq R} |u|^{2\sigma +2} \right)^{\frac{\sigma}{\sigma+1}}
\int_{\R^N}  |\Delta (\varphi u)|^2+ |\nabla (\varphi u)|^2 \, dx.
\end{align*}
Set $ \delta(R) := C \left(\int_{|x| \geq R} |u|^{2\sigma +2} \right)^{\frac{\sigma}{\sigma+1}}$ and observe that $\delta(R) \to 0$ as $R \rightarrow \infty$. It then follows from \eqref{testing} that
\begin{align}\label{fdecae3}
\begin{split}
(1- \delta(R)) \int_{\R^N}  |\Delta (\varphi u)|^2 + |\nabla (\varphi u)|^2 \, dx
&\leq \int_{\R^N}  |\nabla \varphi|^2 |u|^2 \, dx + 4\int_{\R^N}|\nabla \varphi \nabla u|^2 \, dx \\
&+ \int_{\R^N} |u \Delta \varphi|^2 \, dx + 4\int_{\R^N} u\Delta \varphi \nabla \varphi \nabla u  \, dx\\
&-2 \int_{\R^N} u\Delta u |\nabla\varphi|^2\, dx =: \sum_{i=1}^5 I_i.
\end{split}
\end{align}
We now estimate $I_i$ for $1 \leq i \leq 5$. It comes from  \eqref{lamax} that
 \begin{align*}
I_1=  \int_{\R^N}  |\nabla \varphi|^2 |u|^2 \, dx
&= \int_{|x| < 2R } |\nabla \varphi|^2 |u|^2 \, dx +  \int_{|x| \geq 2R}  |\nabla \varphi|^2 |u|^2 \, dx\\
&  \leq C\int_{|x| < 2R} |u|^2 \, dx +  \int_{|x| \geq 2R} \dfrac {|\varphi  u|^2}{|x|^2}  \, dx.
\end{align*}
Noting that $ \nabla \varphi \nabla(\varphi u)=|\nabla \varphi|^2 u +(\nabla \varphi \nabla u)\varphi,$ it follows for $|x|\geq 2R$, that
$$
|\nabla \varphi \nabla u | \leq \frac{|\nabla \varphi \nabla(\varphi u) |}{\varphi}
+ \frac{|\nabla \varphi|^2}{\varphi^2}|\varphi u|.
$$
Combining this inequality and Young's inequality, we obtain, for any $\eps >0$,
\begin{align*}
&\frac{I_2}{4} =\int_{\R^N}  |\nabla \varphi \nabla u|^2 \, dx \leq \int_{ |x| < 2R } |\nabla \varphi \nabla u|^2 \, dx
+\int_{|x| \geq 2R} |\nabla \varphi \nabla u|^2 \, dx\\
& \leq C \int_{ |x| < 2R } |\nabla u|^2 \, dx +  \int_{|x|\geq 2R }(1+ \eps)\frac{|\nabla \varphi \nabla(\varphi u) |^2}{|\varphi|^2} +  \left(1+\frac {1}{\eps}\right) \frac{|\nabla \varphi|^4 }{|\varphi|^4} |\varphi u|^2\, dx\\
& \leq C \int_{ |x| < 2R } |\nabla u|^2 \, dx + (1+ \eps) \int_{|x|\geq 2R } \dfrac{|\nabla (\varphi u)|^2}{|x|^2} + \left(1+\frac {1}{\eps}\right)\int_{\R^N} \dfrac{|\varphi u|^2 }{|x|^4} \, dx.
\end{align*}
Using \eqref{lamax} again, we get
\begin{align*}
I_3 = \int_{\R^N}|u \Delta \varphi|^2 \, dx
&\leq \int_{|x| < 2R} |u|^2 |\Delta \varphi|^2 \, dx + \int_{|x| \geq 2R} |u|^2 |\Delta \varphi|^2 \, dx \\
& \leq C \int_{|x| < 2R} |u|^2 \, dx + \lambda_2^2 \int_{ |x| \geq 2R }\dfrac{|\varphi u|^2}{|x|^2}  \, dx .
\end{align*}
Next we estimate $I_4$. Given $\eps >0$, Young's inequality leads to
\begin{align*}
I_4 &= 4 \int_{\R^N} u \Delta \varphi \nabla \varphi \nabla u \, dx  \leq 4 \int_{|x| \geq 2R} |u\Delta \varphi||\nabla \varphi \nabla u| \, dx \\
&\leq 2 \eps \int_{|x| \geq 2R} |\nabla \varphi \nabla u|^2 \, dx +\frac{2}{\eps}\int_{|x| \geq 2R}  |u\Delta \varphi|^2 \, dx=\frac{\eps I_2}{2}+ \frac {2I_3}{\eps} \\
& \leq C_{\eps} \int_{|x| <2R} |\nabla u|^2 + |u|^2 \, dx + 2( \eps+ \eps^2) \int_{|x|\geq 2R } \dfrac{|\nabla (\varphi u)|^2}{|x|^2} \, dx \\
&+ 2\left(1 +{\eps} \right) \int_{|x| \geq 2R} \dfrac{|\varphi u|^2 }{|x|^4} \, dx
+ \frac{2 \lambda_2^2}{\eps} \int_{ |x| \geq 2R }\dfrac{|\varphi u|^2}{|x|^2}  \, dx.
\end{align*}
We finally consider $I_5$. For $|x| \geq 2R$, we have
$$
\Delta u=\dfrac{\Delta (u\varphi)}{\varphi} - 2\dfrac{\nabla u \nabla \varphi}{\varphi} + \dfrac{u\Delta \varphi}{\varphi}.
$$
This implies
\begin{align*}
\dfrac{I_5}{2} &\leq \int_{|x| < 2R} |u\Delta u| |\nabla\varphi|^2 \, dx + \int_{|x| \geq 2R} |u\Delta u| |\nabla\varphi|^2  \, dx\\
&\leq C \int_{|x| < 2R } |u\Delta u|  \, dx + \int_{|x| \geq 2R} |\Delta (u\varphi)| \dfrac{|u||\nabla \varphi|^2}{|\varphi|}  \, dx \\
& +2 \int_{|x| \geq 2R} |\nabla u \nabla \varphi| \dfrac{|u||\nabla\varphi|^2}{|\varphi|}  \, dx + \int_{|x| \geq 2R} |u|^2 \dfrac{|\nabla\varphi|^2 |\Delta \varphi|}{|\varphi|}   \, dx\\
&:=C \int_{|x| < 2R} |u\Delta u|  \, dx + \sum_{i=1}^3 J_i .
\end{align*}
By Young's inequality, we have
\begin{align*}		
J_1 &\leq \dfrac{\tau}{2} \int_{|x| \geq 2R} |\Delta (u\varphi)|^2 \, dx+\dfrac{1}{2 \tau } \int_{|x|\geq 2R} |u|^2 \dfrac{|\nabla \varphi|^4}{|\varphi|^2}\, dx\\
&\leq \dfrac{\tau}{2} \int_{|x| \geq 2R} |\Delta (u\varphi)|^2  \, dx +\dfrac{1}{2 \tau}  \int_{|x| \geq 2R} \dfrac{|\varphi u| ^2}{|x|^4} \, dx,
\end{align*}
where $\tau >0$. We also get
\begin{align*}
J_2 & \leq \int_{|x| \geq 2R}|\nabla u\nabla \varphi|^2 \, dx +  \int_{|x| \geq 2R} \dfrac{|u|^2 |\nabla \varphi|^4}{|\varphi|^2} \, dx  \leq \frac{I_2}{4} + \int_{|x| \geq 2R} \dfrac{|\varphi u|^2 }{|x|^4}\, dx \\
& \leq C \int_{ |x| < 2R } |\nabla u|^2 \, dx + \left(2 + \frac{1}{\eps} \right) \int_{|x| \geq 2R} \dfrac{|\varphi u|^2 }{|x|^4} \, dx \\
&+ (1+ \eps) \int_{|x|\geq 2R } \dfrac{|\nabla (\varphi u)|^2}{|x|^2} \, dx\\
\end{align*}
and
$$
J_3=\int_{|x| \geq 2R} |u|^2 \dfrac{|\Delta \varphi| |\nabla\varphi|^2}{|\varphi|} \, dx \leq  \lambda_2 \int_{|x| \geq 2R} \dfrac{| \varphi u|^2}{|x|^3} \, dx.
$$
Pulgging these estimates for the $J_i$'s, we obtain
\begin{align*}
I_5 & \leq C \int_{|x| < 2R}|u|^2 +|\nabla u|^2 \, dx + (2 + 2 \eps) \int_{|x| \geq 2R} \dfrac{|\nabla(\varphi u)|^2}{|x|^2} \, dx \\
& + 2 \lambda_2 \int_{|x| \geq 2R} \dfrac{|\varphi u|^2}{|x|^3} \, dx + \left( 4 + \frac{1}{\tau} + \frac{2}{\eps}\right) \int_{|x| \geq 2R} \dfrac{|\varphi u|^2}{|x|^4} \, dx \\
&+ {\tau} \int_{|x| \geq  2R} |\Delta (u\varphi)|^2 \, dx.
\end{align*}
At last, combining the above estimates for the $I_i$'s, we deduce that
\begin{align*}
&\sum_{i=1}^5 I_i \leq  C(R) + \left(6+ 8 \eps + 2\eps^2 \right)\int_{|x| \geq 2R} \dfrac{|\nabla(\varphi u)|^2}{|x|^2} \, dx + 2 \lambda_2 \int_{|x| \geq 2R} \dfrac{|\varphi u|^2}{|x|^3} \, dx\\
& +  \left(10 + \frac{1}{\tau}+ \frac {6}{\eps} + 2 \eps \right)  \int_{|x| \geq 2R} \dfrac{|\varphi u|^2}{|x|^4} \, dx + \left( 1 + \lambda^2_2 + \frac{2 \lambda_2^2}{\eps}\right) \int_{|x| \geq 2R} \dfrac{|\varphi u|^2}{|x|^2} \, dx \\
&+ {\tau} \int_{|x| \geq 2R} |\Delta (u\varphi)|^2 \, dx.
\end{align*}
Recalling Hardy's inequalities
\begin{align} \label{hardy}
\begin{split}
&\int_{\R^N} |\nabla v|^2 \, dx \geq \left(\dfrac{N-2}{2}\right)^2 \int_{\R^N}\dfrac{|v|^2}{|x|^2} \, dx,\\
&\int_{\R^N} |\Delta v|^2 \, dx \geq \dfrac{N^2}{4} \int_{\R^N}\dfrac{|\nabla v|^2}{|x|^2}\, dx,
\end{split}
\end{align}
we deduce from \eqref{fdecae3} and \eqref{hardy} that
\begin{align}\label{finalcontrol}
\begin{split}
&(1- \delta(R)) \int_{\R^N}|\Delta (\varphi u)|^2 + |\nabla (\varphi u)|^2 \, dx
\leq C(R) +\left( \frac{4}{N^2}\left(6+ 8 \eps + 2\eps^2\right) + {\tau}\right)\int_{\R^N}|\Delta (\varphi u)|^2  \, dx\\
&+\left(\dfrac{2}{N-2}\right)^2\left( \frac{1}{4R^2}\left(10 + \frac{1}{\tau}+ \frac {6}{\eps} + 2 \eps \right)+ \left( 1 + \lambda^2_2 + \frac{2 \lambda_2^2}{\eps}\right) + \frac{\lambda_2}{R} \right) \int_{\R^N} |\nabla(\varphi u)|^2 \, dx.
\end{split}
\end{align}
Since $N \geq 5$ and since $\delta(R) \to 0$ as $R \to \infty$, we can choose $\eps, \tau>0$ small enough, and $R>0$ large enough so that
\begin{align*}
\left( \frac{4}{N^2}\left(6+ 8 \eps + 2\eps^2\right) + {\tau}\right) < 1- \delta,
\end{align*}
and
\begin{align*}
\left(\dfrac{2}{N-2}\right)^2\left( \frac{1}{4R^2}\left(10 + \frac{1}{\tau}+ \frac {6}{\eps} + 2 \eps \right)+ \left( 1 + \lambda^2_2 + \frac{2 \lambda_2^2}{\eps}\right) + \frac{\lambda_2}{R} \right)<1- \delta.
\end{align*}
We just show that there exists a constant $C>0$ depending on $R$ only such that 
$$\int_{\R^N}|\nabla (\varphi u)|^2 \, dx \leq C.$$ 
It now follows from \eqref{hardy} that 
$$\int_{|x| \geq 2R} \frac{|\varphi u|^2}{|x|^2}\, dx
\leq C$$
uniformly with respect to $R_1$. Finally, we take $R_1 \to\infty$ and we observe that
$$
\dfrac{|\varphi u|^2}{|x|^2}  \to u^2  \quad \mbox{ a.e for } \quad |x| \geq 2R.$$
Fatou's Lemma then implies that $u \in L^2(\R^N \backslash B_{2R}(0))$ so that $ u \in L^2(\R^N)$.
\end{proof}

\end{document}